\documentclass[10pt]{article}

\usepackage[colorlinks,linkcolor=blue,citecolor=blue]{hyperref}
\usepackage{amsthm}
\usepackage{multirow}
\usepackage{marginnote}
\usepackage{a4wide}
\usepackage{amssymb}
\usepackage{amsfonts}
\usepackage{amsmath}
\usepackage{mathrsfs}
\usepackage{tikz}
\usetikzlibrary{arrows,matrix}
\usetikzlibrary{positioning}
\usepackage{mdframed} % draw fram of texts
\usepackage{lipsum} % this is used for drawing frams of texts
\usepackage{extarrows} % making long equal sign (or arrows) with text under or above
\usepackage{color}
\usepackage{enumitem} 
\usepackage{tocloft} % change the style of table of contents
\usepackage{titlesec} % control the space before and after section titles

\usepackage[all]{xy} % draw diagrams

\input xy
\xyoption{arrow} \xyoption{matrix}

\date{}

\newtheorem{proposition}{Proposition}[section]
\newtheorem{theorem}[proposition]{Theorem}
\newtheorem{lemma}[proposition]{Lemma}

\newtheorem{definition}[proposition]{Definition}
\newtheorem{corollary}[proposition]{Corollary}

\def\der{\partial }

\def\nFM0{{\nu }_{F,M_0}}
\def\nFN0{{\nu }_{F,N_0}}
\def\nGN0{{\nu }_{G,N_0}}

\def\N0{ {\bf N}_0 }

\def\t{\otimes}
\def\g{\gamma}

\def\ra{\rightarrow}

\def\Xpm{X^{\pm }}

\def\s{\sigma}
\def\Z{\mathbb{Z}}

\def\l1{{\lambda}_1}

\def\a{\alpha}
\def\a0{ {\alpha }_0}
\def\a1{ {\alpha }_1}

\def\l{\lambda}

\def\nFGM0{{\nu }_{F,G,M_0}}

\def\nFN0{{\nu}_{F,N_0}}

\def\sm{{\sigma}^m}

\def\sm1{{\sigma}^{-1}}

\def\smtp1{{\sigma}^{-t+1}}

\def\S1{S^{-1}}

\def\Xpm1{X^{\pm 1}_1}

\def\sPM1{{\sigma }^{\pm 1}}
\def\sMP1{{\sigma }^{\mp 1 }}

\def\d{\delta}

\def\di{{\rm d.ind}}

\def\L{\Lambda}

\def\CA{{\cal A}}

\def\Ytm1{Y^{t-1}}
\def\Yim1{Y^{i-1}}

\def\CK{{\cal K}}

\def\CN{{\cal N}}

\def\CG{{\cal G}}

\def\ass{{\rm ass}}

\def\Aut{{\rm Aut}}

\def\Der{{\rm Der }}

\def\gcd{ {\rm gcd } }

\def\SL2Z{ {\rm SL}_2({\bf Z}) }

\def\Gp1{ G^{1 , 1 } }
\def\P11{ P^{-1 , 1 } }
\def\Pp1{ P^{1 , 1 } }

\def\lcm{{\rm lcm }}

\def\nCLsr{{}^\nu\kern-2pt {\cal L}^{\sigma , \rho  }}
\def\nP{{}^\nu \kern-2pt P}
\def\nL{{}^\nu\kern-2pt L}
\def\nLL{{}^\nu\kern-2pt \Lambda}
\def\nPsr{{}^\nu\kern-2pt P^{\sigma , \rho  }}
\def\nLsr{{}^\nu\kern-2pt L^{\sigma , \rho  }}
\def\nuCL{{}^\nu\kern-2pt  {\cal L}}
\def\nCLsr{{}^\nu\kern-2pt {\cal L}^{\sigma , \rho  }}
\def\nCL1m{{}^\nu\kern-2pt {\cal L}^{-1 , 1  }}
\def\x1nu{x^\frac{1}{\nu}}
\def\xm1nu{x^{-\frac{1}{\nu}}}

\def\ob{\overline{b}}

\def\CN{{\cal N}}
\def\ra{\rightarrow }

\def\CB{{\cal B}}
\def\lcm{{\rm lcm}}

\def\CC{ {\cal C}}

\def\nAM0{{\nu }_{{\cal A},M_0}}
\def\nAN0{{\nu }_{{\cal A},N_0}}

\def\End{ {\rm End }}
\def\Der{ {\rm Der }}

\def\bI{\overline{I}}

\def\bR{\overline{R}}

\def\bq{\overline{q}}

\def\ga{\mathfrak{a}}
\def\gb{\mathfrak{b}}

\def\SL{{\rm SL}}

\def\di!{\frac{\der^i}{i!}}
\def\dik!{\frac{\der^k_i}{k!}}
\def\id{{\rm id}}

\def\N{\mathbb{N}}

\def\0{\overline{0}}
\def\1{\overline{1}}

\def\Ln1{\L_{n,\overline{1}}}

\def\oa{\overline{a}}

\def\a1{a_{\overline{1}}}

\def\S{\Sigma}

\def\CU{{\cal U}}

\def\vn1{\overrightarrow{n-1}}

\def\im{{\rm im}}

\def\sl{{\rm sl}}

\def\soc{{\rm soc}}
\def\Sub{{\rm Sub}}

\def\bu{\overline{u}}

\def\mJ{\mathbb{J}}
\def\mI{\mathbb{I}}

\def\ann{{\rm ann}}
\def\lann{{\rm l.ann}}

\def\bM{\overline{M}}

\def\K1{{\rm K}_1}

\def\mP{\mathbb{P}}

\def\hmI1{\widehat{\mI_1}}
\def\tmI1{\widetilde{\mI_1}}
\def\tmJ1{\widetilde{\mJ_1}}
\def\hB1{\widehat{B_1}}
\def\hCB1{\widehat{\CB_1}}

\def\bS{\overline{S}}

\def\Den{{\rm Den}}

\def\Ore{{\rm Ore}}

\def\Den{{\rm Den}}

\def\br{\overline{r}}
\def\bc{\overline{c}}

\def\ga{\mathfrak{a}}

\def\tor{{\rm tor}}
\def\udim{{\rm udim}}

\def \S{\mathcal{S}}

\def\sl2{\mathfrak{sl}_2}

\def\bv{\overline{v}}

\def\EM{{\rm EM}}

\makeatletter
\newenvironment{proof*}[1][\proofname]{\par
  \pushQED{\qed}%
  \normalfont \partopsep=\z@skip \topsep=\z@skip
  \trivlist
  \item[\hskip\labelsep
        \itshape
    #1\@addpunct{.}]\ignorespaces
}{%
  \popQED\endtrivlist\@endpefalse
}
\makeatother

\begin{document}

\author{V. V. \  Bavula}
%%%% [Euclidean-Mods.tex]]  
\title{Euclidean modules, submodules  of their direct sums and strong reduced echelon form}

\maketitle

\begin{abstract}

The aim of the paper is to study Euclidean  and l-finite Euclidean  modules over a ring, submodules of their finite  direct sums, reduced and  strong reduced echelon forms of matrices with coefficients in  Euclidean modules and bimodules. The concept of Euclidean module is a module-theoretic analogue of the concept of Euclidean ring. The class of (not necessarily commutative)  Euclidean rings is a nice but `very  small' class of rings that have arithmetic-like properties. On the contrary, surprisingly, 
 the class of Euclidean modules is a `large class' of modules.  The  Euclidean modules exist for many {\em constructions} of rings that are defined over  {\em arbitrary} rings. For example, for every  skew polynomial ring $A=D[x;\s, \d]$ with coefficients in an {\em arbitrary} ring $D$, the generalized Weyl algebra $D[x,y;\s, a]$, the Weyl algebra $A_1=K[x][\der; \frac{d}{dx}]$, $U(\sl2)$ and many classical algebras,  there exist Euclidean modules.

$\noindent$

{\bf Key Words:} Euclidean module, Euclidean bimodule, greatest common divisor and least common multiple of a set of elements of a Euclidean module, (strong) reduced echelon form, Euclidean ring, simple module, localization, grading, filtration. 
 \\

{\bf Mathematics subject classification 2020:}  16D70, 16P40, 16P20, 16D60.

{ \small \tableofcontents}
\end{abstract}

%%%%%%%%%%%%%%%%%% SECTION 1 %%%%%%%%%%%%%%%%%%%%%

\section{Introduction} \label{INTR} %\marginpar{INTR}

 In this paper, rings are unital,  module means a left module and  $[n]:=\{ 1, \ldots , n\}$ for each natural number $n$.

Euclidean rings, not necessarily commutative, provide one of the few settings in noncommutative algebra where division algorithms and explicit reduction procedures remain available. Their Euclidean structure ensures effective control over left and right ideals and supports constructive methods that are typically inaccessible in general rings. Torsion-free modules over such rings inherit this computational clarity: they admit explicit bases, canonical embeddings and reduction techniques that parallel linear algebra over fields while retaining genuinely noncommutative features. Central to this framework is the reduced echelon form for matrices over left Euclidean rings, which extends Gaussian elimination to the noncommutative context and yields canonical representatives for homomorphisms and submodules of torsion-free modules. The interaction between Euclidean rings, torsion‑free modules and reduced echelon forms produces a coherent and algorithmically rich theory that combines structural transparency with effective computability, offering a natural setting for explicit module-theoretic and homological constructions.

Euclidean rings provide a rare combination of noncommutative generality and algorithmic strength. Their modules - especially torsion‑free modules - inherit this structure, allowing explicit computations, canonical forms and constructive proofs that are unavailable in general rings. The reduced echelon form serves as the central computational tool linking ring-theoretic and module-theoretic properties.

Their Euclidean structure guarantees that left and right ideals are principal, that unimodular rows can be completed and that matrix reduction techniques extend to noncommutative arithmetic. Modules over Euclidean rings inherit this constructive behaviour: torsion-free modules admit explicit bases, canonical embeddings into free modules and algorithmic descriptions of submodules and homomorphisms. A central computational tool is the reduced echelon form for matrices over left Euclidean rings, which generalises Gaussian elimination and provides canonical representatives for linear maps and submodules. Together, these results yield a coherent and computable module theory in which greatest common divisor-type reductions, ideal-theoretic structure  and noncommutative linear algebra interact seamlessly. The framework offers both conceptual clarity and algorithmic strength, enabling explicit classification, decomposition  and reduction procedures that are unavailable in general noncommutative rings.  More information about Euclidean rings and their modules can be found in the following books 
\cite{Anderson-Fuller1992, Cohn1971, Dauns1994, Golan-1974, Jacobson2009, Lam2001,  LambekLect1996, MR}.

 The main object of this paper are Euclidean modules and l-finite Euclidean modules.

 \begin{definition}
Let $R$ be a ring and $(\N^\diamond, \geq)$  be a partially ordered set (poset, for short). An $R$-module $M$ is called a {\bf Euclidean module} if there exists a function $d_M: M\backslash \{ 0\} \ra \N^\diamond$, which is called a {\bf Euclidean function} of $M$, such that the poset $(\im (d_M)), \geq)$ is an Artinian poset and the function $d_M$   satisfies the  {\bf left division property}: For any nonzero elements $u,v\in M$, there exist elements $q\in R$  and $r\in M$ (that are called the {\bf quotient} and the {\bf remainder}) such that 
$$
u=qv+r\;\; \text{where either}\;\;  r=0 \;\; {\rm or}\;\;d_M(r)<d_M(v).
$$
Let $\EM (R)$ be the  class of Euclidean $R$-modules. For a submodule $N$ of $M$, let 
$$
g(N):=\min \{ d_M(n)\, | \, 0\neq n\in N\}\;\; {\rm and}\;\; 
\CG (N):=\{n\in N \, | \, d_M(n)=g(N)\}.
$$
 The function $d_M$ is called an {\bf l-finite Euclidean function} if every interval in  the poset $(\im (d_M)), \geq)$ has finite length (equivalently, $l([g(M), d_M(m)])<\infty$ for all nonzero elements $m\in M$). Then the $R$-module $M$ is called an {\bf l-finite Euclidean module}.  Let $\EM (R, {\rm l-fin.})$ be the  class of l-finite Euclidean $R$-modules.
\end{definition} 
 In a similar way, the concept of  {\em Euclidean bimodule} is defined.  The aim of      the paper is to develop a {\em module-theoretic} theory of Euclidean modules analogous  to  the {\em ring-theoretic} theory of Euclidean rings: 
 to study Euclidean  and l-finite Euclidean  modules and bimodules  over a ring, submodules of their finite  direct sums, reduced and  strong reduced echelon forms of matrices with coefficients in  Euclidean modules and bimodules. The theory is similar but not identical to the classical one as we do not put any restrictions to the ring $R$.  
 The difficulties stem from the fact that the ring $R$ contains zero divisors or  elements of a Euclidean $R$-module can have non-zero annihilators.

The paper is organized as follow. We outline below the structure  and key results of the paper. 
The aim of Section \ref{EUCLID-MOD}  is to introduce and study module-theoretic analogues of the following ring-theoretic concepts:  Euclidean ring, division property and principal left ideal domain. The corresponding concepts are Euclidean and l-finite Euclidean modules, division property  and module with all submodules principal. The class of 
Euclidean rings (not necessarily commutative) form a large yet distinct class of rings that exhibit integer--like properties. Similarly, 
Euclidean modules also exhibit integer--like properties: every submodule of a Euclidean module is generated by  a single element and  for elements of a Euclidean module there are concepts of the {\em greatest common divisor} and the {\em least common multiple}.

All submodules and factor modules of Euclidean module are principal submodules (Proposition \ref{A28May26}.(1)). 
All submodules and factor modules of Euclidean module are also Euclidean modules  (Lemma \ref{a7Jun26}.(1,2)). All submodules and factor modules of l-finite Euclidean module are l-finite Euclidean modules  (Lemma \ref{a7Jun26}.(4)). Lemma \ref{a14Jun26} is a criterion for two submodules of a Euclidean module to be equal provided one contains the other. Every Euclidean module is a Noetherian module (Theorem \ref{14Jun26}.(1)). Every nonzero submodule of a Euclidean module is a  co-finite submodule (Theorem \ref{14Jun26}.(2)). Each l-finite Euclidean module is a uniform (hence indecomposable) module of Krull dimension 1 with zero socle and every nonzero submodule is of infinite length  (Lemma \ref{b14Jun26}). Proposition \ref{A14Jul26} and Proposition \ref{B14Jul26} are characterizations of the greatest common divisor and the least common multiple of a set of finitely many elements of a Euclidean module, respectively. Theorem \ref{15Jun26} is  a module-theoretic analogue of the equality $ab=\gcd (a,b)\lcm (a,b)$ for integers $a$ and $b$. 
 Lemma \ref{a8Jun26} shows that localizations of Euclidean modules have properties very similar to Euclidean modules. Theorem \ref{8Jun26} provides several sufficient conditions for extending   a Euclidean function of a Euclidean ring to its localization.

 In Section  \ref{GRAD-FILT}, the following resulrs are proven.  For an arbitrary  skew polynomial ring $A=D[x;\s, \d]$, 
Theorem \ref{Sig-Del-28May26} describes a class of Euclidean modules. Similarly, for an arbitrary  skew Laurent polynomial ring $L=D[x^{\pm 1};\s]$, 
Theorem \ref{SkewLaur-28May26} describes a class of Euclidean modules. 
 For an $\N$-graded ring, Theorem \ref{11Jun26} is a criterion for a $\N$-graded module to be a Euclidean module. Similarly, for a skew Laurent polynopmial ring,  Theorem \ref{12Jun26}  is a criterion for a $\Z$-graded module to be a Euclidean module. Proposition \ref{FiltEucl-28May26} shows that if the associated graded module is a Euclidean module over the associated graded ring then the original module is a Euclidean module.

In Section \ref{SUB-FACTOR-MOD}, we study  direct sums $M=\bigoplus_{i=1}^n M_i$ of Euclidean $R$-modules $M_i$ and their submodules. These modules are Noetherian $R$-modules. So, every  $R$-submodule $N$ of $M$ admits a finite set of generators $a_i=(a_{i1},\ldots  , a_{in})$, where $i=1,\ldots , m$ and $a_{ij}\in M_j$ and they yield an $m\times n$ matrix $A=(a_{ij})\in M_{m,n}(M_1,\ldots, M_n)$ with coefficients in the direct summands. Each matrix $A'$ in $M_{m,n}(M_1,\ldots, M_n)$ yields an $R$-submodule of $M$ which is generated by the rows of the matrix $A'$. On the rows of the matrix $A$ we can perform elementary row transformations (ER1) and (ER2) where for a fixed row the transformation  (ER1) adds a linear combination of the other rows to it and (ER2) swaps two rows. These operations are reversible and they change the set of generators of the submodule $N$. Two matrices $A, A'\in  M_{m,n}(M_1, \ldots , M_n)$ are called {\bf left-row equivalent}, $A\sim A'$, if one of them can be obtained from the other by finitely many elementary row operations (ER1) and (ER2). The relation $\sim$ is an equivalence relation.  If $A\sim A'$ then $N(A)=N(A')$. Theorem \ref{REForm} is about existence of left reduced echelon form for the matrix $A$.\\

{\bf The strong reduced echelon form.}  
Let $M=\bigoplus_{i=1}^nM_i$ be a direct sum of Euclidean $R$-modules $M_i$. The $R$-module $M$ admits a strictly descending of chain of $R$-submodules 
$$
M=M_{\geq 1}\supset \cdots  \supset M_{\geq i}\supset \cdots  \supset M_{\geq n}\supset M_{\geq n+1}:=\{ 0\}\;\; {\rm where}\;\;   M_{\geq i}:=M_i\oplus\cdots \oplus M_n.
$$
The descending chain defines the $R$-module filtration $\{ M_{\geq i}\}$ on $M$. The associated graded $R$-module ${\rm gr}(M):=\bigoplus_{i=1}^n {\rm gr}(M)_i$, where ${\rm gr}(M)_{\geq i}:=M_i/M_{\geq i+1}\simeq M_i$, is isomorphic to the $R$-module $M$. Let $N$ be an $R$-submodule of $M$. Then the $R$-modules $N$ and $\bM :=M/N$ admit the induced filtrations $ \{ N_{\geq i}:=N\cap M_{\geq i} \}$ and   $ \{ \bM_i:= (N +M_{\geq i})/N \}$, respectively. Then the associated graded $R$-module ${\rm gr}(N):=\bigoplus_{i=1}^n{\rm gr}(N)_i$ is a submodule of the $R$-module $M\simeq {\rm gr}(M)$ where ${\rm gr}(N)_i:=N_{\geq i}/N_{\geq i+1}$. Recall that  $[n]:=\{ 1, \ldots , n\}$ for each natural number $n\geq 1$.

\begin{definition}
The set $\mP_s (N):=\mP_s (N,M):=\{ i\in [n]\, | \, {\rm gr}(N)_i\neq 0 \}$ is called the {\bf strong pivot set} and the elements of the set $\mP_s (N)$ are called the set of {\bf strong pivots} of $N$.
\end{definition}

 In addition to the elementary row operations (ER1) and (ER2), on the rows $a_1, \ldots , a_m$ of the matrix $A$, we  perform a new  elementary row operation -- adding to the  zero row a linear combination of the  other rows:
\begin{eqnarray*}
{\rm (ER0)}\;\;&\;\;\;\;\; & 0\mapsto 0+\sum_{i=1}^m r_ia_i\;\; {\rm where}\;\; r_i\in R.
\end{eqnarray*}
On the level of generators, it means that we replace the  generating set $a_1, \ldots , a_m$ of the $R$-module $N$ by the generating set  $a_1, \ldots , a_m,\sum_{i=1}^mr_ia_i$.  
In the classical situation when the ring $R$ is a left Euclidean domain and all  $M_i=R$, this operation is redundant because nonzero elements of the ring $R$ act injectively on the modules $M_i=R$. In the general situation, this is not the case and this is the main reason that we have to add the operation (ER0) in order to obtain the strong reduced echelon form which is a more refined version of the reduced echelon form and it contains more information about the module as we will see later. The element $\sum_{i=1}^mr_ia_i$ can be deleted from the list of generators $a_1, \ldots , a_m,\sum_{i=1}^mr_ia_i$ by using the operation (ER1). 
 
Two matrices $A, A'\in  M_{m,n}(M_1, \ldots , M_n)$ and $A'\in  M_{m',n}(M_1, \ldots , M_n)$ are called {\bf strongly left-row equivalent}, $A\sim_s A'$, if one of them can be obtained from the other by finitely many elementary row operations (ER0), (ER1) and (ER2) (`$s$' stands for `strongly'). The relation $\sim_s$ is an equivalence relation.   If $A\sim_s A'$ then $N(A)=N(A')$ where $N(A)=N(a_1, \ldots , a_m)=\sum_{i=1}^mRa_i$ and $N(A')=N(a_1', \ldots , a_{m'}')=\sum_{i=1}^{m'}Ra_i'$.

\begin{definition}[{\bf Strong reduced echelon form}]
Let $M=\bigoplus_{i=1}^nM_i$ be a direct sum of Euclidean $R$-modules $M_i$, $0\neq A =(a_1, \ldots , a_s)^t\in M_{s,n}(M_1, \ldots , M_n)$, where $a_1, \ldots , a_s$ are the rows of the matrix $A$, $N(A)=\sum_{i=1}^sRa_i$  and $\mP_s(N(A))=\{ j_1, \ldots , j_s\}$ where $j_1< \cdots < j_s$. The  matrix $A$ is said to be in {\bf strong (left) reduced echelon form} if 
\begin{enumerate}

\item  for each $\nu = 1, \ldots, s$, $N_{\geq  j_\nu}=\sum_{\mu =\nu}^sRa_\mu$ and the element $a_{\nu j_\nu}\in M_{j_\nu}$ has the least degree in the set of nonzero elements of the $R$-module  ${\rm gr}(N)_{j_\nu}=Ra_{\nu j_\nu}$, and 

\item 
  the degrees of the  elements $a_{1j_\nu}, \ldots , a_{\nu-1, j_\nu}\in M_{j_\nu}$ are  strictly less that the degree of the element $a_{\nu j_\nu}\in M_{j_\nu}$.

\end{enumerate}
\end{definition}
It follows from the definition that  if the matrix $A$ is in strong reduced echelon form then it is also in reduced echelon form. 
 Theorem \ref{STR-REForm} is about existence of strong reduced echelon form. 

\begin{theorem}[{\bf Existence of strong reduced echelon form}] \label{STR-REForm}%\marginpar{STR-REForm}

Let $M=\bigoplus_{i=1}^nM_i$ be a direct sum of Euclidean $R$-modules $M_i$, $0\neq A \in M_{m,n}(M_1, \ldots ,M_n)$ and $\mP_s(N(A))=\{ j_1, \ldots , j_s\}$  where $j_1< \cdots < j_s$. Then the  matrix
$A$ is strongly left-row equivalent to a matrix in strong 
   reduced echelon form.
\end{theorem}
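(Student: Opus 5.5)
The plan is to build the strong reduced echelon form directly from the filtration $N_{\geq i}=N\cap M_{\geq i}$ of $N:=N(A)$, and then to check that the resulting matrix is reachable from $A$ by the operations (ER0), (ER1), (ER2).

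First, for each strong pivot $j_\nu\in\mP_s(N)$, the module ${\rm gr}(N)_{j_\nu}=N_{\geq j_\nu}/N_{\geq j_\nu+1}$ embeds into $M_{j_\nu}$ via the $j_\nu$-th coordinate. Its image is a nonzero submodule of the Euclidean module $M_{j_\nu}$. By Proposition \ref{A28May26}.(1) and Lemma \ref{a7Jun26}, this image is principal, generated by any element of least degree in its set of nonzero elements; this is the standard division argument, since the remainder must vanish. So I choose $b_\nu\in N_{\geq j_\nu}$ whose $j_\nu$-coordinate $b_{\nu j_\nu}$ has degree $g({\rm gr}(N)_{j_\nu})$. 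Then ${\rm gr}(N)_{j_\nu}=Rb_{\nu j_\nu}$.

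Next comes a descending induction on $\nu$. I claim $N_{\geq j_\nu}=\sum_{\mu\geq\nu}Rb_\mu$.
\begin{itemize}
\item Take $x\in N_{\geq j_\nu}$. Its $j_\nu$-coordinate equals $rb_{\nu j_\nu}$ for some $r\in R$, so $x-rb_\nu\in N_{\geq j_\nu+1}$.
\item Since $j_\nu+1,\ldots,j_{\nu+1}-1$ are not pivots, $N_{\geq j_\nu+1}=N_{\geq j_{\nu+1}}$.
\item Induction finishes the step. The base case is $N_{\geq j_s+1}=0$, and likewise $N=N_{\geq j_1}$.
\end{itemize}
Condition (2) is arranged by reduction. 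For $\mu<\nu$, divide $b_{\mu j_\nu}$ by $b_{\nu j_\nu}$ in $M_{j_\nu}$: $b_{\mu j_\nu}=qb_{\nu j_\nu}+r$ with $r=0$ or $d(r)<d(b_{\nu j_\nu})$. Replace $b_\mu$ by $b_\mu-qb_\nu$. I proceed column by column, $\nu=2,\ldots,s$. Earlier pivot coordinates are unaffected, since $b_\nu\in M_{\geq j_\nu}$ vanishes in columns $<j_\nu$, and the sums $\sum_{\mu\ge\nu}Rb_\mu$ are unchanged. Here "$r=0$" counts as degree less than everything, matching the convention of the definition.

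Finally, I reach $B=(b_1,\ldots,b_s)^t$ from $A$. Each $b_\nu\in N$ is an $R$-combination of the rows of $A$. Using (ER0), I append a zero row and turn it into $b_\nu$, for every $\nu$. This gives $(a_1,\ldots,a_m,b_1,\ldots,b_s)^t$. Since the $b$'s generate $N$, each $a_i$ is a combination of the $b$'s, so (ER1) turns each $a_i$ into $0$. Rows are then reordered by (ER2). Zero rows are deleted by reading (ER0) backwards; this is legitimate since $\sim_s$ is symmetric.

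The main obstacle is this bookkeeping. One must check that (ER1) applied to $a_i$ uses only the other rows, and that removing zero rows really counts as an allowed operation. The paper explicitly allows $m'\neq m$, which supports this. The substantive mathematical content, though, is the existence step: the degree-minimal $j_\nu$-coordinate generates all of ${\rm gr}(N)_{j_\nu}$. This is exactly where (ER0) is needed in general, because such a $b_\nu$ need not be obtainable by (ER1)/(ER2) alone when $R$ acts with annihilators.
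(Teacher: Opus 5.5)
Your proof is correct, but it is organized differently from the paper's.

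The paper argues recursively. It first uses (ER0)--(ER2) to produce a single row $b_1\in M_{\geq j_1}$ whose $j_1$-entry is a least-degree generator of ${\rm gr}(N)_{j_1}$. It then uses (ER1) to push all remaining rows into $N_{\geq j_2}$, and observes that $N_{\geq j_2}=\ann_R(b_{1j_1})b_1+N(\text{remaining rows})$. Since $\ann_R(b_{1j_1})b_1$ need not vanish, it invokes Noetherianity (Theorem \ref{14Jun26}.(1)) to pick finitely many generators $r_1b_1,\ldots ,r_tb_1$ and appends them by (ER0). Finally it applies the induction hypothesis inside $M_{\geq j_2}$ and reduces the entries of $b_1$ in the later pivot columns.

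You instead choose all the lifts $b_\nu\in N_{\geq j_\nu}$ at once. You prove $N_{\geq j_\nu}=\sum_{\mu\geq \nu}Rb_\mu$ by descending induction through the graded pieces, using $N_{\geq j_\nu+1}=N_{\geq j_{\nu+1}}$. You then do all the reductions above the pivots. Only at the end do you connect $B$ to $A$, appending the $b_\nu$ via (ER0) and killing the $a_i$ via (ER1).

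Your route buys several things:
\begin{enumerate}
\item No Noetherian argument is needed: finite generation of every $N_{\geq j_\nu}$ falls out of the cyclicity of the pieces ${\rm gr}(N)_{j_\nu}$.
\item The annihilator bookkeeping that forces the paper to append the rows $r_ib_1$ is absorbed into the single fact that the $b_\nu$ generate $N$.
\item Your last step directly shows that $N(A)=N(A')$ implies $A\sim_s A'$. That is Theorem \ref{b26Jul26}, which the paper derives only afterwards via this theorem and Theorem \ref{26Jul26}.
\end{enumerate}

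The paper's recursion, in turn, stays closer to the column-by-column elimination of Theorem \ref{REForm}. It handles one pivot column and the annihilator of one pivot entry at a time, which makes visible exactly where (ER0) is forced.
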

 Theorem \ref{26Jul26} is a criterion for two matrices in  strong reduced echelon form to be strongly equivalent. Theorem \ref{b26Jul26} describes the equivalence class of the matrix $A$. Proposition \ref{a30Jul26} provides criteria ensuring  that  the $R$-modules $N(A)$ and $M/N(A)$ have finite length and it gives explicit expressions for these lengths. Proposition \ref{REForm-N1} presents a sufficient condition  for a reduced echelon form of the matrix $A$ to be a strong reduced echelon form and it also computes the  uniform dimensions of the $R$-module $N(A)$ and $N(A)+M(C\mP_s(N(A))$.\\

{\bf Left-row-column equivalence $\approx $.} 
Let $E$ be a Euclidean $R$-module  and   $0\neq A=(a_{ij}) \in M_{m,n}(E, \ldots ,E)$.
 On the columns $c_1, \ldots , c_n$ of the matrix $A$, we can do two types of operations -- {\em elementary column  operations}:
\begin{eqnarray*}
 &{\rm (EC1)}& c_i\mapsto c_i+\sum_{j\neq i}r_jc_j\;\; {\rm where}\;\; r_j\in R,  \\
 &{\rm (EC2)}& c_i\mapsto c_j, \;\; c_j\mapsto c_i.
\end{eqnarray*}
For a fixed column, (EC1) adds a linear combination of the other columns to it, and (EC2) swaps two columns. 
 These operations are reversible.

Two matrices $A, A'\in  M_{m,n}(E,\ldots , E)$ are called {\bf left-column equivalent}, $A\sim_c A'$, if one of them can be obtained from the other by finitely many elementary column operations (EC1) and (EC2). The relation $\sim_c$ is an equivalence relation.  Two matrices $A, A'\in   M_{m,n}(E,\ldots , E)$ are called {\bf left-row-column equivalent}, $A\approx A'$, if one of them can be obtained from the other by finitely many elementary row and column operations: (ER1), (ER2), (EC1) and (EC2). The relation $\approx$ is an equivalence relation. For a matrix $A=(a_{ij})\in   M_{m,n}(E,\ldots , E)$, let $\gcd (A):=\gcd \{ a_{ij}\, | \, i=1,\ldots, m; j=1,\ldots , n\}$. Then $\CN_R(A):=\sum_{i=1}^m\sum_{j=1}^nRa_{ij}=R\gcd (A)$. 
Clearly, if $A\approx A'$ then $R\gcd (A)=R\gcd (A')$.

\begin{theorem}\label{REForm-N3}%\marginpar{REForm-N3}

Let $(E,d_E)$ be a  Euclidean $R$-module  and   $0\neq A \in M_{m,n}(E, \ldots ,E)$ where $(m,n)\neq (1,1)$. Then $A\approx B$ where
\[
B
=
\begin{pmatrix}
b_{11} & 0      & \cdots & 0      & 0      & \cdots & 0 \\
0      & b_{22} & \cdots & 0      & 0      & \cdots & 0 \\
\vdots & \vdots & \ddots & \vdots & \vdots &        & \vdots \\
0      & 0      & \cdots & b_{ss} & 0      & \cdots & 0 \\[4pt]
%\hline
0      & 0      & \cdots & 0      & 0      & \cdots & 0 \\
\vdots & \vdots &        & \vdots & \vdots &        & \vdots \\
0      & 0      & \cdots & 0      & 0      & \cdots & 0
\end{pmatrix}_{m\times n}
\]
where  $Rb_{11}\supset Rb_{22}\supset \cdots \supset Rb_{ss}\neq \{0\}$, $d_E(b_{11})<d_E(b_{22})< \cdots < d_E(b_{ss})$ and $b_{11}=\gcd (A)$. 
\end{theorem}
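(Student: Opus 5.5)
Throughout, entries of $A$ live in a single Euclidean module $E$, and the ring $R$ acts only on the left. Row operations (ER1) therefore form $R$-linear combinations of rows. Column operations (EC1) add $r_jc_j$ to $c_i$, where $r_jc_j$ means left multiplication of every entry of the column by $r_j$. So both kinds of operation act by left multiplication on entries, which is what lets us run a Smith-normal-form style argument using only the left division property of $d_E$.

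The plan is an induction on $m+n$ (or on $\min(m,n)$), built on the following key step.

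\textbf{Key step.} If $A\neq 0$, then $A\approx A'$, where $A'$ has $(1,1)$-entry $b$ with $Rb=R\gcd(A)$, $d_E(b)=g(\CN_R(A))$, and all other entries of row $1$ and column $1$ equal to zero.

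To prove it:
\begin{enumerate}
\item Among all matrices $\approx A$ with a nonzero entry, choose one whose entry of minimal $d_E$-value is as small as possible. This is possible because $(\im(d_E),\geq)$ is Artinian, so minimal elements exist.
\item Using (ER2) and (EC2), move that entry to position $(1,1)$; call it $b$.
\item For any other entry $a_{1j}$ or $a_{i1}$, divide: $a_{i1}=qb+r$. Then (ER1) $a_i\mapsto a_i-qa_1$ replaces $a_{i1}$ by $r$. If $r\neq 0$, then $d_E(r)<d_E(b)$, contradicting minimality. Hence all of column $1$ can be cleared below $b$, and similarly all of row $1$ to the right of $b$ via (EC1).
\item Finally, $b$ must left-divide every entry $a_{ij}$ with $i,j\geq2$, in the sense that $a_{ij}\in Rb$. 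Otherwise divide $a_{ij}=qb+r$ with $r\neq0$. Apply (ER1) $a_1\mapsto a_1+a_i$, which places $a_{ij}$ at position $(1,j)$, then use (EC1) $c_j\mapsto c_j-qc_1$. This produces $r$ in row $1$ with $d_E(r)<d_E(b)$, again contradicting minimality.
\end{enumerate}
Since $\approx$ preserves $\CN_R$, we get $R\gcd(A)=\CN_R(A)=Rb$, so $b$ may be taken as $\gcd(A)$. Its degree is the minimum $g(\CN_R(A))$, since every element of $\CN_R(A)$ arises as an entry after suitable operations: adding rows and columns realises any $\sum r_{ij}a_{ij}$ in position $(1,1)$. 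In fact, step 1 can be streamlined by choosing $b$ directly of degree $g(\CN_R(A))$, using exactly this realisation argument.

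Next apply induction to the lower-right $(m-1)\times(n-1)$ block $A_1$. Operations on $A_1$ extend to operations on $A'$ that do not disturb the first row and column, since those are zero outside position $(1,1)$. If $A_1=0$, we are done with $s=1$. If $A_1$ is a $1\times1$ block, or more generally whenever the hypothesis $(m,n)\neq(1,1)$ fails for the block, it is already diagonal, and we only need the degree conditions, discussed below. Induction gives the diagonal entries $b_{22},\dots,b_{ss}$ with $Rb_{22}\supset\cdots\supset Rb_{ss}\neq0$ and $d_E(b_{22})<\cdots<d_E(b_{ss})$, with $b_{22}=\gcd(A_1)$. Since all entries of $A_1$ lie in $Rb_{11}$, we have $Rb_{22}\subseteq Rb_{11}$.

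\textbf{Main obstacle: the strict inequalities.} We need $d_E(b_{11})<d_E(b_{22})$ and $Rb_{11}\supsetneq Rb_{22}$. By choice, $d_E(b_{11})=g(\CN_R(A))\leq d_E(b_{22})$, but equality of degrees, or even $Rb_{11}=Rb_{22}$, is possible. For example, for $\mathrm{diag}(b,b)$ one cannot separate the entries by row-column operations, since both generate the same module.

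The strategy for this case is to exploit the freedom that ties may be merged. If $Rb_{22}=Rb_{11}$, then $b_{22}=rb_{11}$ and $b_{11}=r'b_{22}$. Adding row $2$ to row $1$ and using a column operation then kills $b_{22}$ modulo $b_{11}$: from $(b_{11},b_{22})$ in row $1$, apply $c_2\mapsto c_2-rc_1$, then clear row $2$ by $a_2\mapsto a_2-\ldots$. This reduces the number of nonzero diagonal entries, so iterating yields a strict chain.

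More carefully, I would first compare degrees via Lemma \ref{a14Jun26}, the criterion for equality of nested submodules. If $Rb_{22}\subseteq Rb_{11}$ and $d_E(b_{22})=d_E(b_{11})=g(Rb_{11})$, then $Rb_{22}=Rb_{11}$, so equal degree forces equal submodules. Hence it suffices to eliminate the case $Rb_{i}=Rb_{i+1}$ by the merging trick above and then re-run the induction. The process terminates because each merge decreases $s$.

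This merging step, and verifying that it genuinely makes the entry vanish despite possible annihilators, is the delicate point. Writing $b_{22}=rb_{11}$, the operations $a_1\mapsto a_1+a_2$ followed by $c_2\mapsto c_2-rc_1$ turn the $(1,2)$ entry into $0$ and the $(2,2)$ entry into $b_{22}-r\cdot0=b_{22}$. So one must instead use $b_{11}=r'b_{22}$: with $c_1\mapsto c_1-r'c_2$ after the row addition, row $1$ becomes $(0,b_{22})$ and row $2$ becomes $(-r'b_{22},b_{22})$. A further row step with $a_2\mapsto a_2-a_1$ then gives row $2$ equal to $(-r'b_{22},0)=(-b_{11},0)$, which still leaves two entries.

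This confirms that the real issue is whether $\mathrm{diag}(b,b)\approx\mathrm{diag}(b,0)$ in general. I expect it must be handled using the non-domain nature, or the statement's conditions implicitly exclude it. I would try first to prove this reduction, and if it fails, examine whether the intended conclusion allows $s$ to be defined so that ties are absorbed.
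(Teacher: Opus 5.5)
The step you leave open cannot be closed. With strict $\supset$ and $<$ the statement is false, so the reduction $\mathrm{diag}(b,b)\approx\mathrm{diag}(b,0)$ you are looking for does not exist in general. Neither zero divisors nor a hidden hypothesis will rescue it.

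Take $R=E=\Z$ with $d_E(z)=|z|$ and $A=I_2$. Since $\Z$ is commutative, (ER1) and (EC1) are multiplication by unipotent matrices, and (ER2), (EC2) are multiplication by permutation matrices. So $\det$ is invariant up to sign under $\approx$. A $2\times 2$ diagonal form with $s=1$ has determinant $0$. One with $s=2$ has $b_{11}b_{22}=\pm 1$, so $b_{11},b_{22}\in\{\pm1\}$, giving $d_E(b_{11})=d_E(b_{22})$ and $Rb_{11}=Rb_{22}$. Over a field $K$ with $d_E\equiv 0$, invariance of rank gives the same conclusion. Your failed merging computation is therefore not a defect of technique.

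The paper's proof does not get past this point either. After the merge $\mathrm{diag}(b_p,b'_{22})\approx\mathrm{diag}(b_{11},rb'_{22})$ with $b_{11}=\gcd(b_p,b'_{22})$, it declares the strict inequalities obvious. For $A=I_2$ it returns a diagonal matrix with entries $\pm1$.

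What your argument does prove is the corrected, non-strict statement: $A\approx\mathrm{diag}(b_{11},\ldots,b_{ss},0,\ldots)$ with $b_{11}=\gcd(A)$, $Rb_{11}\supseteq\cdots\supseteq Rb_{ss}\neq 0$ and $d_E(b_{ii})=g(Rb_{ii})$. Hence $d_E(b_{11})\leq\cdots\leq d_E(b_{ss})$, and two consecutive degrees are equal iff the corresponding submodules coincide (Lemma~\ref{a14Jun26}).

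One repair is needed in your key step. Your reason for $d_E(b)=g(\CN_R(A))$ is that an arbitrary $\sum r_{ij}a_{ij}$ can be realised in position $(1,1)$. This is not justified as stated, since one row and one column operation do not produce arbitrary coefficients $r_{ij}$. The ``streamlined'' step 1 would also presuppose the key step itself. Instead, once row and column $1$ are cleared, $\CN_R(A)=Rb$, and each $\rho b$ can be placed at $(1,2)$ by $c_2\mapsto c_2+\rho c_1$ (or at $(2,1)$ if $n=1$). Minimality of $d_E(b)$ over the equivalence class then suffices.

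On this non-strict statement your route is cleaner than the paper's. The paper alternates gcds of the first column and first row until stabilisation, then asserts $Rb_p\supseteq\CN_R(C)$. That process does not give this: $\mathrm{diag}(2,3)$ over $\Z$ stabilises at once with $b_p=2$ and $C=(3)$. So the paper needs the $\gcd(b_p,b'_{22})$ merge to compensate.

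Moreover, the paper's $b_{11}$ is an arbitrary generator of $\CN_R(A)$, which need not have least degree. For example, $1+x^2$ generates the Euclidean $K[x]$-module $K[x]/(x^3)$ but has degree $2>0$. So even $d_E(b_{11})\leq d_E(b_{22})$ is not secured there. Your minimal-degree pivot together with step 4 gives both the divisibility of the block and the degree comparison directly.
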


{\bf Euclidean bimodules.} An $R$-bimodule $E$ is called a {\bf cyclic $R$-bimodule} if $E=Re=eR$ for some element $e\in E$ which is called a {\bf cyclic generator} for $E$. An element $c\in E$ of an $R$-module $E$ is called a {\bf cyclic element} of $E$ if $Rc=cR$. Let $\CC (E)$ be the set of cyclic elements of $E$. 
\begin{definition}
An $R$-bimodule $(E,d_E)$ is called a {\bf Euclidean $R$-bimodule} if the left and right $R$-modules, $(_RE,d_E)$ and $(E_R,d_E)$, are Euclidean. 
\end{definition}
 Every sub-bimodule of the Euclidean $R$-bimodule $E$ is a cyclic $R$-bimodule  (Lemma \ref{a9Aug26}).
\begin{definition}
Let $E$ be a Euclidean $R$-bimodule. For  elements $a_1, \ldots , a_n\in E$, a cyclic  generator 
$\gcd_b (a_1, \ldots , a_n)$ of the cyclic  sub-bimodule $N_b(a_1, \ldots , a_n):=\sum_{i=1}^n Ra_iR$ of $E$   is called the {\bf bimodule greatest common divisor} of the elements  $a_1, \ldots , a_n$ where $Ra_iR$ is a sub-bimodule of $E$ generated by the element $a_i$. Similarly, a cyclic generator 
$\lcm_b (a_1, \ldots , a_n)$ of the cyclic  sub-bimodule $I_b(a_1, \ldots , a_n):=\bigcap_{i=1}^n Ra_iR$ of $E$   is called the {\bf bimodule least common multiple} of the elements  $a_1, \ldots , a_n$.
\end{definition}
Proposition \ref{bi-A14Jul26} and Proposition \ref{bi-B14Jul26} are characterizations of the bimodule greatest common divisor and the bimodule least common multiple, respectively.

Let $(E,d_E)$ be a Euclidean $R$-bimodule  and   $0\neq A=(a_{ij}) \in M_{m,n}(E, \ldots ,E)$.
 On the columns $c_1, \ldots , c_n$ of the matrix $A$, we can do two types of operations -- {\em elementary column  operations}:
\begin{eqnarray*}
 &{\rm (EC1)}& c_i\mapsto c_i+\sum_{j\neq i}c_jr_j\;\; {\rm where}\;\; r_j\in R,  \\
 &{\rm (EC2)}& c_i\mapsto c_j, \;\; c_j\mapsto c_i.
\end{eqnarray*}

Two matrices $A, A'\in  M_{m,n}(E,\ldots , E)$ are called {\bf right-column equivalent}, $A\sim_{rc} A'$, if one of them can be obtained from the other by finitely many elementary column operations (EC1) and (EC2). The relation $\sim_{rc}$ is an equivalence relation.  Two matrices $A, A'\in   M_{m,n}(E,\ldots , E)$ are called {\bf row-column equivalent}, $A\approx_b A'$ (where $b$ stands for `bimodule'), if one of them can be obtained from the other by finitely many elementary row and column operations: (ER1), (ER2), (EC1) and (EC2). The relation $\approx_b$ is an equivalence relation. For a matrix $A=(a_{ij})\in   M_{m,n}(E,\ldots , E)$, let $\gcd_b (A):=\gcd_b \{ a_{ij}\, | \, i=1,\ldots, m; j=1,\ldots , n\}$. Then $N_b(A):=\sum_{i=1}^m\sum_{j=1}^nRa_{ij}R=R\gcd_b (A)=\gcd_b (A)R$. 
Clearly, if $A\approx_b A'$ then $N_b(A)=N_b(A')$ and $R\gcd_b (A)=R\gcd_b (A')$. 

\begin{theorem}\label{bi-REForm-N3}%\marginpar{bi-REForm-N3}

Let $(E,d_E)$ be a  Euclidean $R$-bimodule  and   $A \in M_{m,n}(E, \ldots ,E)$ where $m,n\geq 2$. Then $A\approx_b B$ where
\[
B
=
\begin{pmatrix}
b_{11} & 0      & \cdots & 0      & 0      & \cdots & 0 \\
0      & b_{22} & \cdots & 0      & 0      & \cdots & 0 \\
\vdots & \vdots & \ddots & \vdots & \vdots &        & \vdots \\
0      & 0      & \cdots & b_{ss} & 0      & \cdots & 0 \\[4pt]
%\hline
0      & 0      & \cdots & 0      & 0      & \cdots & 0 \\
\vdots & \vdots &        & \vdots & \vdots &        & \vdots \\
0      & 0      & \cdots & 0      & 0      & \cdots & 0
\end{pmatrix}_{m\times n}
\]
where  $N_b(A)=Rb_{11}R\supseteq Rb_{22}R\supseteq \cdots \supseteq Rb_{ss}R\neq \{0\}$.

%*** ***
% $Rb_{11}\supset Rb_{22}\supset \cdots \supset Rb_{ss}\neq \{0\}$, $b_{11}R\supset b_{22}R\supset \cdots \supset b_{ss}R\neq \{0\}$, $d_E(b_{11})<d_E(b_{22})< \cdots < d_E(b_{ss})$ and $b_{11}=\gcd (A)=\gcd_r (A)$. In particular, $Rb_{11}R\supseteq Rb_{22}R\supseteq \cdots \supseteq Rb_{ss}R\neq \{0\}$, $Rb_{11}=\sum_{i=1}^m\sum_{j=1}^nRa_{ij}$ and $b_{11}R=\sum_{i=1}^m\sum_{j=1}^na_{ij}R$.
 
% **
\end{theorem}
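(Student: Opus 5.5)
We prove Theorem \ref{bi-REForm-N3} by induction on $m+n$ (equivalently on $\min(m,n)$), using the usual diagonalization scheme driven by the Euclidean function $d_E$. Row operations use the left division property of $(_RE,d_E)$, and column operations use the right division property of $(E_R,d_E)$.

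If $A=0$ there is nothing to prove (take $s=0$), so assume $A\neq 0$. The key step is the following claim.

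\emph{Claim.} $A\approx_b A'$ with $A'=\begin{pmatrix} b_{11}&0\\ 0&A_1\end{pmatrix}$, where $A_1\in M_{m-1,n-1}(E,\ldots,E)$ and every entry of $A_1$ lies in $Rb_{11}R$.

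Granting the claim, the theorem follows. Operations (ER1), (ER2), (EC1), (EC2) do not change $N_b$, so $N_b(A)=N_b(A')=Rb_{11}R+N_b(A_1)=Rb_{11}R$. If $m-1\geq 2$ and $n-1\geq 2$, apply the induction hypothesis to $A_1$. Operations on the rows and columns of $A_1$ extend to operations on $A'$ that fix the first row and column, because the off-diagonal entries there are zero. This gives $N_b(A_1)=Rb_{22}R\supseteq\cdots$, and $Rb_{11}R\supseteq N_b(A_1)=Rb_{22}R$.

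If instead $A_1$ is a single row or column, diagonalize it using only one-sided Euclidean reduction. For a column, the left division property lets us reduce it by (ER1)/(ER2) to $(b_{22},0,\ldots,0)^t$; this is the gcd argument of Theorem \ref{REForm-N3}. For a row, use the right-module version via (EC1)/(EC2). The inclusion $Rb_{11}R\supseteq Rb_{22}R$ holds because $b_{22}\in N_b(A_1)\subseteq Rb_{11}R$. Zero diagonal entries are dropped, which defines $s$.

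To prove the claim, minimize $d_E$ over all nonzero entries of all matrices $\approx_b$-equivalent to $A$. This minimum exists since $\im(d_E)$ is Artinian. Move a minimizing entry $b$ to position $(1,1)$ by (ER2) and (EC2). For $i\geq 2$, left-divide: $a_{i1}=q_ib+r_i$. Then (ER1), $\text{row}_i\mapsto \text{row}_i-q_i\,\text{row}_1$, replaces $a_{i1}$ by $r_i$. Minimality forces $r_i=0$, since otherwise $d_E(r_i)<d_E(b)$. Symmetrically, right-divide $a_{1j}=bq_j'+r_j'$ and use (EC1), $c_j\mapsto c_j-c_1q_j'$, to clear the first row. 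The row operations do not disturb the first row, and the column operations act only on entries that are already zero in the first column apart from the $(1,1)$ entry, so the block form is reached.

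To show that every entry $a_{ij}$ of $A_1$ lies in $Rb_{11}R$, add row $i$ to row $1$ by (ER1). The first row becomes $(b,0,\ldots,a_{ij},\ldots)$. Right-divide $a_{ij}=bq+r$ and apply (EC1), $c_j\mapsto c_j-c_1q$. This makes the $(1,j)$ entry equal to $r$, and minimality gives $r=0$. Hence $a_{ij}=bq$, so $a_{ij}\in bR\subseteq RbR$.

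The main obstacle is that the minimization must range over the whole $\approx_b$-class, with all entries compared simultaneously. That is why both division properties are needed, and it is this argument that gives the containment of all of $A_1$ in $Rb_{11}R$. We only obtain $\supseteq$ rather than strict inclusions or degree inequalities, because after passing to $A_1$ the minimal degree attained within $N_b(A_1)$ need not exceed $d_E(b_{11})$ strictly in a compatible way. That matches the weak statement of the theorem. The case split for $A_1$ of size $1\times k$ or $k\times 1$ must be handled separately because the induction hypothesis requires $m,n\geq 2$.
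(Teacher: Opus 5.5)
Your proof is correct, and its central step differs from the paper's.

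\textbf{How the paper gets the block form.} The paper reaches $\begin{pmatrix} b_{11}&0\\ 0&C\end{pmatrix}$ by alternating two moves until the values $d_E(b_k)$ stop decreasing: a left gcd of the first column (row operations, Lemma \ref{EuclDivAlg}), then a right gcd of the first row (column operations). It then simply asserts $Rb_{11}\supseteq Rc_{ij}$ and $b_{11}R\supseteq c_{ij}R$ for all entries $c_{ij}$ of $C$, and inducts on $n$. That assertion does not follow from the alternation alone. For $E=\Z$ and $A=\mathrm{diag}(2,3)$ the process stops at once with $b_{11}=2$ and $C=(3)$, yet $N_b(A)=\Z\neq 2\Z$.

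\textbf{What your route supplies.} You choose $b$ as an entry of minimal $d_E$-value over the whole $\approx_b$-class. You then add row $i$ to row $1$, right-divide $a_{ij}$ by $b$, and clear the remainder by (EC1). This is exactly what forces every entry of $A_1$ into $bR\subseteq RbR$, and hence gives $N_b(A)=Rb_{11}R$.

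\textbf{Comparison.} The paper's route has the merit of being an explicit procedure built from one-sided gcd computations. To be complete, however, it needs a step of exactly your kind: if some $c_{ij}\notin b_{11}R$, fold its row into the first row and repeat. Your route is a shorter existence proof. It uses the fact that a single function $d_E$ governs both left and right division, as the definition of a Euclidean bimodule provides. It also needs only a minimal element, not a minimum, of a nonempty subset of the Artinian poset $\im(d_E)$, since the contradiction uses only that no value lies strictly below $d_E(b)$.

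Your separate treatment of thin blocks $A_1$ by one-sided Euclidean reduction is also cleaner than the paper's base case. There the lower block is written as a single entry $\alpha$, even though for $n=2$ it is $(m-1)\times 1$.

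\textbf{A harmless slip.} After adding row $i$ to row $1$, the first row is $(b,a_{i2},\ldots,a_{in})$, not $(b,0,\ldots,a_{ij},\ldots)$. Only the $(1,1)$ entry and the zeros below it in column $1$ are used, so the argument is unaffected.
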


%%%%%%%%%%%%%   Section 2    %%%%%%%%

\section{Euclidean modules, their propersties and localizations}\label{EUCLID-MOD} %\marginpar{EUCLID-MOD}

%*** UBRAT  ***

%All submodules and factor modules of Euclidean module are principal submodules (Proposition \ref{A28May26}.(1)). 
%All submodules and factor modules of Euclidean module are also Euclidean modules  (Lemma \ref{a7Jun26}.(1,2)). All submodules and factor modules of l-finite Euclidean module are l-finite Euclidean modules  (Lemma \ref{a7Jun26}.(4)). Lemma \ref{a14Jun26} is a criterion for two submodules of a Euclidean module to be equal provided one contains the other. Every Euclidean module is a Noetherian module (Theorem \ref{14Jun26}.(1)). Every nonzero submodule of a Euclidean module is a  co-finite submodule (Theorem \ref{14Jun26}.(2)). Each l-finite Euclidean module is a uniform (hence indecomposable) module of Krull dimension 1 with zero socle and every nonzero submodule is of infinite length  (Lemma \ref{b14Jun26}). Proposition \ref{A14Jul26} and Proposition \ref{B14Jul26} are characterizations of the greatest common divisor and the least common multiple of a set of finitely many elements of a Euclidean module, respectively. Theorem \ref{15Jun26} is  a module-theoretic analogue of the equality $ab=\gcd (a,b)\lcm (a,b)$ for integers $a$ and $b$. 
% Lemma \ref{a8Jun26} shows that localizations of Euclidean modules have properties very similar to Euclidean modules. Theorem \ref{8Jun26} provides several sufficient conditions for extending   a Euclidean function of a Euclidean ring to its localization. \\

%***

{\bf Euclidean rings.} 
Briefly,  {\em Euclidean domain}  is a domain endowed with a specific type of function that allows for a generalized division with remainder.

 A (not necessarily commutative)  domain $E$ is called  a {\bf left Euclidean domain} if there exists a function (often called a {\em Euclidean function, valuation, degree}, or {\em norm}) $d: E \setminus \{0\} \to \N$ such that it satisfies the  {\bf left division property}: For any elements $a,b\in E$ with $b\neq 0$, there exist elements $q,r\in E$ ({that are called the {\bf quotient} and the {\bf remainder}) such that 
 $$
 a=qb+r\;\; \text{where either}\;\;  r=0\;\; {\rm  or}\;\;d(r)<d(b).
 $$ 
 Similarly, a {\bf right Euclidean domain} is defined. A ring $F$ is called a {\bf principal left ideal domain} if every left ideal is a principal left ideal. Every left/right Euclidean domain is a principal left/right ideal domain but not vice versa, in general.
  
  The rings $E=\Z$ and $K[x]$ are left/right  Euclidean domains with Euclidean functions $d(z)=|z|$ ($z\in \Z$) and $d (f)=\deg (f)$, the degree of the polynomial $f\in K[x]$ where $K$ is a field, respectively. The Gaussian integers $\Z[i]=\Z \oplus \Z i$, where $i^2=-1$, are Euclidean domain with $d(a+bi)=a^2+b^2$. 
 
Let $D$ be a ring and  $\s\in \Aut (D)$. Then  an endomorphism $\d: D\ra D$  of the abelian group $(D,+)$ is called  a $\s$-{\em derivation} if $\d(ab)=\d (a)b+\s (a)\d(b)$ for all elements $a,b\in D$. A {\bf skew polynomial ring} $A=D[x;\s, \d]$ is a ring that is generated by the ring $D$ and an element $x$ subject to the defining relations:  $xa=\s (a)x+\d (a)$ for all elements $a\in D$. Then $A=\bigoplus_{i\in \N}Dx^i=\bigoplus_{i\in \N}x^iD$. 
 Important classes are $D[x;\s]:=D[x;\s, \d=0]$ and $D[x;\d]:=D[x;\s=\id_D, \d]$.
Each element $a\in A$, is a unique sum
$$a=\sum_{i=0}^na_ix^i=\sum_{i=0}^nx^i\s^{-i}(a_i)\;\; {\rm where}\;\;  a_i\in D.
$$
 Suppose that $a\neq 0$, i.e. there is a nonzero coefficient $a_i\neq 0$. Then the natural number $\deg (a):=\max\{i\in \N\, | \, a_i\neq 0\}$ is called the {\em degree} of the element $a$, the element $a_nx^n=x^n\s^{-n}(a_n)$, where $n=\deg (a)$, is called the {\em leading term} of $a$ and the elements $a_n$  and $\s^{-n}(a_n)$ are called the {\em left} and {\em right  leading coefficients} of $a$, respectively. By  definition, $\deg (0):=-\infty$.
  Then for all elements $a,b\in A$ and $d\in D$:
 \begin{eqnarray*}
 \deg (a+b)&\leq &\max \{ \deg (a), \deg (b)\},\\
 \deg (ab)&\leq &\deg (a)+\deg (b),\\
\deg (ab)&=&\deg (a)+\deg (b),\;\; {\rm if}\;\; \text{$D$ is a domain.}
 \end{eqnarray*}
Clearly, if $D$ is a division ring then the ring $A=D[x;\s, \d]$ is a left and right Euclidean ring since the function $\deg$ satisfies the left and right division property. Therefore, the ring $A=D[x;\s, \d]$ is a left and right principal ideal domain. In particular, the skew polynomial rings $D[x;\s]$ and  $D[x;\d]$ are left/right Euclidean domain and left and right principal ideal domains.

 The degree function $\deg$ determines the ascending {\em degree filtration} $\{ A_{\leq n} \}_{i\in \N}$ on $A$ where $A_{\leq n}:=\{ a\in A\, | \, \deg (a)\leq n\}$:
$$
A=\bigcup_{i\in \N} A_{\leq n}\;\; {\rm where}\;\;  A_{\leq n} A_{\leq m}\subseteq  A_{\leq n+m}\;\; \text{for all }\;\; n,m\in \N.
$$ 
Then the {\em associated graded ring} of $A$, 
$${\rm gr}(A):=\bigoplus_{n\in \N}A_{\leq n}/A_{\leq n-1}\simeq D[x; \s],
$$
is a skew polynomial ring.\\
 
{\bf Euclidean modules and their properties.} A partially ordered set $(\N^\diamond, \geq)$ is called {\bf Artinian} if every descending chain $i_1\geq i_2\geq \cdots$ stabilizes, that is there is a natural number $j\geq 1$ such that $i_1\geq i_2\geq \cdots \geq i_j=i_{j+1}=\cdots$.
 For each pair of elements of $\N^\diamond$, $i\geq j$, the set $[i,j]:=\{k\in \N^\diamond \, | \, i\leq k\leq j \}$ is a called an {\bf interval}. We say that the interval $[i,j]$ has finite length,  denoted by   $l([i,j])$, if 
 $l([i,j]):=\sup\{ k\in \N\, | \, i=i_1<i_2<\cdots <i_k=j\}<\infty$. 
A poset in which every interval has finitely many elements is called a {\bf locally finite poset}. The poset $(\N, \geq)$ is a locally finite poset and $l([i,j])=j-i+1$. A locally finite poset is a poset in which every interval has finite length but not vice versa. 
  Let $P:=\{1-\frac{1}{n}\, | \, n=1,2,\ldots\}\cup \{ 1\}$.   The poset $(P, \geq )$ is Artinian but  not  every interval has finite length since  $l([0,1])=\infty$. The set of nonzero ideals of the ring of integers $\Z$ is an Artinian  poset where $I\geq J$ means $I\subseteq J$ and $l([I,J])=l_\Z (J/J)<\infty$, the $\Z$-length of the $\Z$-module $J/I$. 
 
 \begin{definition}
Let $R$ be a ring and $(\N^\diamond, \geq)$  be a partially ordered set (poset, for short). An $R$-module $M$ is called a {\bf Euclidean module} if there exists a function $d_M: M\backslash \{ 0\} \ra \N^\diamond$, which is called a {\bf Euclidean function} of $M$, such that the poset $(\im (d_M)), \geq)$ is an Artinian poset and the function $d_M$   satisfies the  {\bf left division property}: For any nonzero elements $u,v\in M$, there exist elements $q\in R$  and $r\in M$ (that are called the {\bf quotient} and the {\bf remainder}) such that 
$$
u=qv+r\;\; \text{where either}\;\;  r=0 \;\; {\rm or}\;\;d_M(r)<d_M(v).
$$
Let $\EM (R)$ be the  class of Euclidean $R$-modules. For a submodule $N$ of $M$, let 
$$
g(N):=\min \{ d_M(n)\, | \, 0\neq n\in N\}\;\; {\rm and}\;\; 
\CG (N):=\{n\in N \, | \, d_M(n)=g(N)\}.
$$
 The function $d_M$ is called an {\bf l-finite Euclidean function} if every interval in  the poset $(\im (d_M)), \geq)$ has finite length (equivalently, $l([g(M), d_M(m)])<\infty$ for all nonzero elements $m\in M$). Then the $R$-module $M$ is called an {\bf l-finite Euclidean module}.  Let $\EM (R, {\rm l-fin.})$ be the  class of l-finite Euclidean $R$-modules.
\end{definition}

If the poset $(\N^\diamond, \geq)$ is an Artinian poset such that  every interval  has finite length then every sub-poset is an Artinian poset such that  every interval  has finite length. In particular, every function $\d : M\backslash \{ 0\}\ra \N^\diamond$ that satisfies the left division property is automatically an l-finite Euclidean function and the $R$-module $M$ is an l-finite Euclidean module. For example, the poset $(\N ,\geq )$ is  an Artinian poset such that  every interval  has finite length. 
 
\begin{definition}
Let $R$ be a ring and $(\N^\diamond, \geq)$  be a partially ordered set (poset, for short). The ring $R$ is called a {\bf left/right Euclidean ring} if the left/right $R$-module $R$ is a left/right Euclidean $R$-module. If, in addition,  the left/right $R$-module $R$ is l-finite then the ring $R$ is called an {\bf l-finite  left/right Euclidean ring}.
\end{definition}

%***************** DELETE ******

%\begin{definition}
%Let $R$ be a ring. An $R$-module $M$ is called a {\bf Euclidean module} if there exists a function $d_M: M\ra \N$, which is called a {\bf Euclidean function} of $M$, that satisfies the  {\bf left division property}: For any elements $u,v\in M$ with $v\neq 0$, there exist elements $q\in R$  and $r\in M$ (that are called the {\bf quotient} and the {\bf remainder}) such that 
%$$
%u=qv+r\;\; \text{where either}\;\;  r=0 \;\; {\rm or}\;\;d_M(r)<d_M(v).
%$$
%Let $\EM (R)$ be the  class of Euclidean $R$-modules. For a submodule $N$ of $M$, let $g(N):=\min \{ d_M(n)\, | \, 0\neq n\in N\}$ and $\CG (N):=\{n\in N \, | \, d_M(n)=g(N)\}$.
%$\end{definition}

%**************  END DELETE   ***********

An $R$-module $M$ is called a {\em cyclic} or  {\em principal}  or 1-{\em generated} module if $M=Rm$ for some element $m\in M$. 

\begin{definition}
Let $R$ be a ring. An $R$-module $M$ is called a {\bf module with all submodules principal} if every submodule of $M$ is a principal module. 
\end{definition}

A ring $R$ is called a {\bf left/right  Bezout} if every finitely generated left/right ideal of 
$R$  is principal.  A module is called a {\bf Bezout module} if every finitely generated submodule   is principal. 

\begin{proposition}\label{A28May26}%\marginpar{A28May26}
Let $R$ be a ring and $M$ be a  Euclidean module with Euclidean function $d_M$. Then:
\begin{enumerate}

\item The  module $M$ is a module with all submodules principal. In particular, the $R$-module $M$ is a cyclic Bezout  module.

\item If $N$ is a nonzero submodule of $M$ then the set of 1-generators of the $R$-module $N$ contains  the non-empty  set $\CG (N)$.
%$:=\{ n\in N\, | \, d_M(n)=l \}$ where $l:=\min \{ d_M(n')\, | \, 0\neq n'\in N \}$. 

\end{enumerate}
\end{proposition}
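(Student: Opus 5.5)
The plan is to prove statement 2 first and then deduce statement 1 from it. This is the module analogue of the classical argument that a Euclidean domain is a principal left ideal domain.

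Let $N$ be a nonzero submodule of $M$. The set $\{ d_M(n)\, | \, 0\neq n\in N\}$ is a non-empty subset of the Artinian poset $(\im (d_M), \geq)$, so it has a minimal element. Here one point needs care. The definition of $g(N)$ uses ``$\min$'', but in a poset an Artinian hypothesis only guarantees minimal elements, not a least one. So I would take $\CG (N)$ to consist of the nonzero $n\in N$ whose degree $d_M(n)$ is minimal, meaning that no nonzero element of $N$ has strictly smaller degree. This is exactly what the argument below uses, and it makes $\CG (N)$ non-empty. If the paper intends $g(N)$ to be a genuine minimum, the argument works verbatim.

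Now fix $v\in \CG (N)$ and take any nonzero $u\in N$. The left division property gives $u=qv+r$ with $q\in R$, where either $r=0$ or $d_M(r)<d_M(v)$. Since $r=u-qv\in N$, the case $r\neq 0$ would give a nonzero element of $N$ of degree strictly below $d_M(v)$, contradicting minimality. Hence $r=0$ and $u=qv\in Rv$. It follows that $N=Rv$, so every element of $\CG (N)$ is a 1-generator of $N$, which is statement 2.

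For statement 1, every nonzero submodule is principal by statement 2, and the zero submodule equals $R0$. Thus $M$ is a module with all submodules principal. In particular $M$ itself is cyclic (take $N=M$; if $M=0$ this is trivial), and every finitely generated submodule is principal, so $M$ is a Bezout module.

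The only delicate step is the poset subtlety about minimal versus minimum degree; I expect no other obstacle.
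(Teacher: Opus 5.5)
Your proof is correct and is essentially the paper's argument: fix $v$ of minimal degree in $N$, divide an arbitrary $u\in N$ by $v$, observe that the remainder lies in $N$ so it must be $0$, and then deduce statement 1 from statement 2. Your remark that an Artinian poset only guarantees minimal (not least) degrees, and that the argument uses only minimality, is a correct refinement of the paper's use of $\min$ in the definition of $g(N)$.
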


\begin{proof} 2. By the definition, $\CG (N)\neq \emptyset$. Fix an element $v\in \CG (N)$. Then for  any nonzero element $u\in N$, there exist elements $q\in R$  and $r\in M$  such that $u=qv+r$ where either $r=0$ or $d_M(r)<d_M(v)=g(N)$. Since $=u-qv\in N$, we must have $r=0$, by the minimality of $g(N)$. Therefore, $N=Rv$.

1. Statement 1 follows from statement 2.
\end{proof}

Lemma \ref{a7Jun26} shows that all submodules and factor modules of  Euclidean modules  are also Euclidean modules. 

\begin{lemma}\label{a7Jun26}%\marginpar{a7Jun26}
Let $(M, d_M)$ be a Euclidean $R$-module. Then:
\begin{enumerate}

\item Every submodule $N$ of $M$ is a Euclidean $R$-module $(N, d_N)$ where $d_N:=d_M |_N$.

\item Every factor module $M/N$ of $M$ is a  Euclidean $R$-module $(M/N,d_{M/N})$ where $ d_{M/N}(m+N):=\min\{d_M(m+n)\, | \, n\in N \}$.

\item All submodules and factor modules of a Euclidean module are module with all submodules principals.

\item All submodules and factor modules of an l-finite Euclidean modules are l-finite Euclidean modules.

\end{enumerate}
\end{lemma}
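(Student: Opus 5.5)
We verify each of the four statements directly from the definition of a Euclidean module. Throughout, the Artinian (and, in statement 4, finite-length) conditions transfer automatically, because the images of the new functions are sub-posets of $\im (d_M)$. Sub-posets inherit both properties: a descending chain in a sub-poset is a descending chain in $\im (d_M)$. Likewise, a strict chain inside an interval of the sub-poset is a strict chain inside the corresponding interval of $\im (d_M)$, so its length is bounded by the length of that interval.

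For statement 1, we set $d_N:=d_M|_N$, so that $\im (d_N)\subseteq \im (d_M)$. Given nonzero $u,v\in N$, the division property in $M$ gives $u=qv+r$ with $q\in R$, $r\in M$, and either $r=0$ or $d_M(r)<d_M(v)$. Since $r=u-qv\in N$, this is already a division with remainder in $N$ with respect to $d_N$.

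For statement 2, the function $d_{M/N}$ is well defined on nonzero cosets $\bar m=m+N$. The set $\{ d_M(m+n)\,|\, n\in N\}$ is nonempty, since $m+n\neq 0$ whenever $m\notin N$, and it has minimal elements because $\im (d_M)$ is Artinian. In a merely partially ordered set a ``min'' need not be unique. We handle this by choosing, for each coset, one minimal value, and we fix a representative $m_0\in \bar m$ realizing it.

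\begin{itemize}
\item \emph{Division.} Given nonzero cosets $\bar u,\bar v$, take such a representative $v_0$ of $\bar v$ and any representative $u$ of $\bar u$, and divide in $M$: $u=qv_0+r$. If $r=0$ or $r\in N$, then $\bar u=q\bar v$. Otherwise $\bar r\neq 0$ and $d_{M/N}(\bar r)\le d_M(r)<d_M(v_0)=d_{M/N}(\bar v)$. Here we need that the chosen minimal value of $\bar r$ is $\le d_M(r)$.
\item \emph{Main obstacle.} This inequality is the one delicate point: for a non-total order, an arbitrary choice of minimal element of the set of values need not lie below the particular value $d_M(r)$. To circumvent it, we will instead define $d_{M/N}(\bar m)$ by a canonical choice that is comparable to everything in the coset's value set. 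Alternatively, we note that the paper's intended $\N^\diamond$ is such that the minimum exists, and then the argument goes through verbatim.
\item \emph{Totally ordered case.} If $\im (d_M)$ is totally ordered (e.g.\ $\N$), the minimum is genuine and the argument is immediate.
\item \emph{Image condition.} The image $\im (d_{M/N})$ is contained in $\im (d_M)$, hence is Artinian.
\end{itemize}

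Statement 3 follows by combining statements 1 and 2 with Proposition \ref{A28May26}.(1).

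For statement 4, we apply statements 1 and 2 and use again that $\im (d_N)$ and $\im (d_{M/N})$ are sub-posets of $\im (d_M)$. Every interval of a sub-poset embeds in the corresponding interval of $\im (d_M)$, so each strict chain in it has length at most that of the ambient interval, which is finite.
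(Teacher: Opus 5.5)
Your proposal is correct and follows the paper's proof essentially verbatim. Statement 1 is the division in $M$ restricted to $N$, using that the remainder $u-qv$ lies in $N$. Statement 2 divides by a representative $v+n$ realizing $d_{M/N}(\bar v)$ and then uses $d_{M/N}(\bar r)\le d_M(r)$. Statements 3 and 4 are immediate consequences, via Proposition \ref{A28May26}.(1) and the fact that images and intervals of sub-posets inherit the Artinian and finite-length conditions.

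The non-total-order subtlety you flag is real, and the paper passes over it silently: it only invokes Artinianity, which yields minimal rather than least elements. Your first suggested remedy, a value comparable to everything in the coset's value set, need not exist in a general poset. The workable resolution is the one you also give: read $\min$ as a genuine least element, as the defining formula for $d_{M/N}$ already presupposes (e.g.\ when the order is total), and under that reading your argument is complete.
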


\begin{proof} 1. The proof is evident.

2. For each element $m\in M$, let $\overline{m}:=m+N\in M/N$. Clearly, $d_{M/N}(\overline{m})\leq d_M(m)$ for all elements $m\in M\backslash N$. Suppose that  $\bu$ and $\bv$ are nonzero elements of the module $M/N$. Since the poset $(\im (d_M),\geq )$ is Artinian,  we can fix an element $n\in N$ such that $$
d_M(v+n)=d_{M/N}(\overline{v}).
$$ 
Now, $u=q(v+n)+r$ for some elements $q\in R$ and $r\in M$ such that $d_M(r)<d_M(v+n)$. Therefore, $\overline{u} =q\overline{v}+\overline{r}$ where $\overline{r} \in M/N$ and  $d_{M/N}(\overline{r})\leq  d_M(r)<d_M(v+n)=d_{M/N}(\overline{v})$, and statement 2 follows. 

3. Statement 3 follows from statements 1 and 2 and Proposition \ref{A28May26}.(1).

4. Statement 4 follows from statements 1 and 2.
\end{proof}

Lemma \ref{a14Jun26} is a very useful and simple equality  criterion for  two submodules of a Euclidean module such that one contains the other. This an analogue of a `similar' result for finite dimensional vector subspaces such that one contains the other: They are equal iff their dimensions are equal. 

\begin{lemma}\label{a14Jun26}%\marginpar{a14Jun26}
Suppose that $M_1\subseteq M_2$ are submodules of a Euclidean module $M$. Then  $M_1=M_2$ iff $g(M_1)=g(M_2)$. 

%Let $(M, d_M)$ be a Euclidean $R$-module and $M_1\subseteq M_2$ be submodules of $M$. Then  $M_1=M_2$ iff $g(M_1)=g(M_2)$. 
\end{lemma}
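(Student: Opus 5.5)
The implication $M_1=M_2\Rightarrow g(M_1)=g(M_2)$ is immediate, so only the converse needs work. I will first fix the degenerate case. If one of the submodules is zero, then $g(\cdot)$ is a minimum over an empty set. I will read the statement as concerning nonzero submodules, or use the convention that $g(0)$ is a symbol distinct from every element of $\im(d_M)$. Under either reading, $g(M_1)=g(M_2)$ forces $M_1=0$ if and only if $M_2=0$. So I may assume $0\neq M_1\subseteq M_2$.

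The key step is Proposition \ref{A28May26}.(2). Since $\im(d_M)$ is Artinian, the minimum defining $g(M_1)$ is attained, so the set $\CG(M_1)$ is non-empty. Pick any $v\in\CG(M_1)$, so $v\in M_1$ and $d_M(v)=g(M_1)$. Because $M_1\subseteq M_2$, we also have $v\in M_2$ and $d_M(v)=g(M_1)=g(M_2)$, so $v\in\CG(M_2)$. By Proposition \ref{A28May26}.(2), every element of $\CG(M_2)$ generates $M_2$, hence $M_2=Rv$.

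Since $v\in M_1$, this gives $M_2=Rv\subseteq M_1\subseteq M_2$, and therefore $M_1=M_2$.

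The only subtle point is interpreting $g$ as a minimum in a partially ordered set. There could be several minimal values, in which case $g(N)$ should be read as the least value if it exists, or the argument should work with any minimal value. This causes no problem here. The proof of Proposition \ref{A28May26}.(2) uses only that $d_M(v)$ is minimal among the values of $d_M$ on $N\setminus\{0\}$: a remainder $r\in N$ with $d_M(r)<d_M(v)$ would contradict minimality. An element $v\in M_1$ attaining the value $g(M_2)$ therefore plays exactly that role inside $M_2$. I expect no other obstacle.
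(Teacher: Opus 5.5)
Your proof is correct and matches the paper's argument. From $g(M_1)=g(M_2)$ you get $\CG(M_1)\subseteq\CG(M_2)$, and Proposition \ref{A28May26}.(2) then gives $M_2=Rv\subseteq M_1$ for any $v\in\CG(M_1)$. Your remarks on the zero submodule and on reading $g$ as a minimal value in a poset are harmless refinements that the paper leaves implicit.
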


\begin{proof} $(\Rightarrow)$ Clear.

 $(\Leftarrow)$ Suppose that $g(M_1)=g(M_2)$. Then $\CG (M_1)\subseteq \CG (M_2)$, and so $M_1=M_2$, by Proposition \ref{A28May26}.(2).
\end{proof}

For an $R$-module $M$, we denote by $l_R(M)$ its {\bf length}. A submodule $N$ of $M$ is called a {\bf co-finite} submodule if $l_R(M/N)<\infty$. 

Theorem \ref{14Jun26} shows that every Euclidean module is a Noetherian module such that all its proper factor modules have finite length. 

\begin{theorem}\label{14Jun26}%\marginpar{14Jun26} 
\begin{enumerate}

\item Every Euclidean module is a Noetherian module. 

\item Each  nonzero submodule  $N$ of an l-finite Euclidean $R$-module $(M,d)$   is a co-finite submodule with $l_R(M/N)\leq l([g(M), g(N)])$. Furthermore, if $(\N^\diamond, \geq )=(\N, \geq )$ then $l_R(M/N)\leq g(N)+1$.

\end{enumerate}
\end{theorem}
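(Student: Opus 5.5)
For statement 1, I plan to combine Lemma \ref{a7Jun26}.(1) with Proposition \ref{A28May26}.(1). Every submodule $N$ of a Euclidean module $M$ is cyclic, hence finitely generated, and a module all of whose submodules are finitely generated is Noetherian. As a sanity check, one can also argue directly. Take an ascending chain $N_1\subseteq N_2\subseteq\cdots$ of nonzero submodules. The values $g(N_1)\geq g(N_2)\geq\cdots$ form a descending chain in the Artinian poset $(\im(d_M),\geq)$, so they stabilize. By Lemma \ref{a14Jun26}, the chain of submodules stabilizes at the same point.

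For statement 2, I will exploit this monotonicity more carefully. Let $0\neq N\subseteq M$ and consider any strictly ascending chain of submodules
$$N=L_0\subsetneq L_1\subsetneq\cdots\subsetneq L_k=M.$$
Each $L_i$ is nonzero. By Lemma \ref{a14Jun26}, strict inclusion $L_{i}\subsetneq L_{i+1}$ forces $g(L_{i})\neq g(L_{i+1})$. Since $L_i\subseteq L_{i+1}$, we also have $g(L_{i+1})\leq g(L_i)$ (the minimum over a larger set). Hence
$$g(M)=g(L_k)<g(L_{k-1})<\cdots<g(L_0)=g(N)$$
is a strictly increasing chain of $k+1$ elements of $\im(d_M)$ inside the interval $[g(M),g(N)]$. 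Therefore $k+1\leq l([g(M),g(N)])$, which is finite because $d_M$ is l-finite. In particular every chain of submodules between $N$ and $M$ has length at most $l([g(M),g(N)])$. Together with statement 1 (Noetherian), this also gives the Artinian condition on $M/N$. So $M/N$ has finite length, bounded by the supremum of chain lengths, and $l_R(M/N)\leq l([g(M),g(N)])-1\leq l([g(M),g(N)])$.

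For the case $(\N^\diamond,\geq)=(\N,\geq)$, the interval satisfies $l([g(M),g(N)])=g(N)-g(M)+1\leq g(N)+1$, which gives the stated bound.

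The only delicate point is the bookkeeping between the number of elements in a strict chain and the paper's convention for $l([i,j])$ (it counts elements, since $l([i,j])=j-i+1$ on $\N$). That convention makes the claimed inequality hold with room to spare. I expect no real obstacle beyond checking that a finite bound on chain lengths between $N$ and $M$ gives finite length of $M/N$, which is standard via the Jordan–Hölder / composition series argument.
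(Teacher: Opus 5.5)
Your proof is correct and follows essentially the paper's route. In both, Lemma \ref{a14Jun26} turns each strict inclusion of nonzero submodules into a strict inequality of $g$-values. A chain of submodules between $N$ and $M$ therefore gives a strict chain in the interval $[g(M),g(N)]$ of $\im (d_M)$, and for statement 1 it gives a strictly descending chain in an Artinian poset. The only difference is minor: the paper builds one chain with simple factors downward from $M$ using Noetherianity, whereas you bound every chain between $N$ and $M$ at once. Your version is slightly cleaner and gives the sharper bound $l_R(M/N)\leq l([g(M),g(N)])-1$.
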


\begin{proof} 1. Suppose that $M_1\subset M_2\subset \cdots$ is a strictly ascending chain of submodules of a Euclidean module $M$. By Lemma \ref{a14Jun26}, we have the strictly descending chain  $g(M_1) >g(M_2)>\cdots$ in the Artinian poset, a contradiction.  Therefore, the module $M$ is a Noetherian module. 

2. By statement 1, the $R$-module $M$ is a Noetherian module. Therefore, we can produce a chain of submodules, 
$N\subseteq M_n\subset M_{n-1}\subset \cdots \subset M_1=M$, such that all the factor modules $M_1/M_2, \ldots , M_{n-1}/M_n$ are simple $R$-modules. By Lemma \ref{a14Jun26},
$$ 
g(M_1)<g(M_2)<\cdots <g(M_n)\leq g(N).
$$ 
Therefore, $l_R(M/N)\leq l([g(M), g(N)])$, as required. In particular case when  $(\N^\diamond, \geq )=(\N, \geq )$ we obtain that  $l_R(M/N)\leq g(N)+1$.
\end{proof}

 A nonzero submodule of a module is called an {\bf essential  submodule} if it intersects non-trivially every nonzero submodule. A nonzero module is called a {\bf uniform module} if all nonzero submodules are essential. A module is called an {\bf indecomposable module} if it is not a direct sum of  two nonzero submodules. The {\bf Krull dimension} of an $R$-module $M$ is denoted by  $\CK (M)=\CK_R(M)$. The  $R$-module $M$ is called a {\bf critical} $R$-module if for all nonzero submodules $N$ of $M$, $\CK (M/N)<\CK (M)$.

Lemma \ref{b14Jun26} contains some properties of Euclidean modules of infinite length.

\begin{lemma}\label{b14Jun26}%\marginpar{b14Jun26}
Let $M$ be an l-finite  Euclidean $R$-module with $l_R(M)=\infty$. Then:  

\begin{enumerate}

\item $M$ is a uniform $R$-module.

\item $M$ is an indecomposable module.

\item $\soc_R(M)=0$. 

\item Every nonzero submodule of $M$ is of infinite length. 

\item $\CK (M)=1$ and $\CK (M/N)=0$  for all nonzero submodules $N$, i.e. the $R$-module $M$ is a critical $R$-module.

\end{enumerate}
\end{lemma}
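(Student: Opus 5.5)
The key input is Theorem \ref{14Jun26}.(2): every nonzero submodule $N$ of the l-finite Euclidean module $M$ satisfies $l_R(M/N)\leq l([g(M),g(N)])<\infty$. Combined with $l_R(M)=\infty$, this already gives most of the statement. I will first prove statement 4, then derive 1, 2, 3, and finish with 5.

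\emph{Statement 4.} Let $N\neq 0$ be a submodule. Since $l_R(M/N)<\infty$, if also $l_R(N)<\infty$ then $l_R(M)=l_R(N)+l_R(M/N)<\infty$, a contradiction. Hence $l_R(N)=\infty$.

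\emph{Statement 3.} A simple submodule has length $1$, which contradicts statement 4. So $M$ has no simple submodules, and $\soc_R(M)=0$.

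\emph{Statement 1.} Let $N_1,N_2$ be nonzero submodules and suppose $N_1\cap N_2=0$. Then $N_2\simeq (N_1+N_2)/N_1\subseteq M/N_1$. Since $l_R(M/N_1)<\infty$, this forces $l_R(N_2)<\infty$, contradicting statement 4. So every nonzero submodule is essential and $M$ is uniform. (Note $M\neq0$ because $l_R(M)=\infty$.)

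\emph{Statement 2.} A decomposition $M=N_1\oplus N_2$ with both summands nonzero would have $N_1\cap N_2=0$, which is impossible by statement 1. So $M$ is indecomposable.

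\emph{Statement 5.} By Theorem \ref{14Jun26}.(1), $M$ is Noetherian, so its Krull dimension exists. For every nonzero $N$, the module $M/N$ has finite length; if $N\neq M$ then $\CK(M/N)=0$. (If $N=M$ the quotient is zero, which is fine under the usual convention $\CK(0)=-1$, and is harmless for the criticality claim either way.)

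Next I show $\CK(M)\leq 1$. Take any descending chain $M\supseteq L_1\supseteq L_2\supseteq\cdots$. If every $L_i=0$ the chain is trivial. Otherwise, letting $L_k$ be a nonzero term, each factor $L_i/L_{i+1}$ with $i\geq k$ is a subquotient of $M/L_{i+1}$, which has finite length once $L_{i+1}\neq 0$. Using the deviation-of-posets definition, this gives $\CK(M)\leq 1$. Alternatively, I can cite the standard fact that a module all of whose proper factors by nonzero submodules are Artinian has Krull dimension at most $1$.

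Conversely, $M$ is not Artinian. It is Noetherian, so if it were also Artinian it would have finite length, contradicting $l_R(M)=\infty$. Hence $\CK(M)\geq 1$, and therefore $\CK(M)=1$. Criticality then follows since $\CK(M/N)=0<1$ for every nonzero proper $N$.

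The main obstacle is the Krull-dimension step. I need to state precisely the definition being used and check the convention for $N=M$. I plan to use the characterization that $\CK(M)\leq 1$ iff for every descending chain almost all factors are Artinian. Here this is immediate, since all the factors lie inside the finite-length modules $M/L_{i+1}$.
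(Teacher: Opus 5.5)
Your proof is correct and uses the same ingredients as the paper's: Theorem \ref{14Jun26}.(2) together with additivity of length, and for statement 5 the same two observations, namely that $M$ is Noetherian but not Artinian, and that every factor $M/N$ with $N\neq 0$ has finite length. The only real difference is the order of the steps: you prove statement 4 directly and deduce 3 and 1 from it, whereas the paper proves 1 via the embedding $M\to M/N\times M/L$, proves 3 by additivity of length, and then gets 4 from 3 using the fact that a nonzero finite-length module has nonzero socle, so your route to 4 is slightly more direct.
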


\begin{proof} 1. Suppose that $N\cap L=0$ for some nonzero submodules $N$ and $L$ of $M$. Then there is an $R$-monomorphism
$$
M=M/(N\cap L)\ra M/N\times M/L, \;\;m\mapsto (m+N,m+L),
$$
 and we have the contradiction, $\infty =l_R(M)\leq l_R(M/N)+l_R(M/L)<\infty$, by Theorem \ref{14Jun26}.(2). 
Therefore, $M$ is a uniform $R$-module.

2. Clearly, each uniform module is indecomposable. 

3. Suppose that $\soc_R(M)\neq 0$. Then we can fix a simple submodule of $M$, say $N$. Then, by Theorem \ref{14Jun26}.(2), $\infty=l_R(M)=l_R(N)+l_R(M/N)<\infty$, a contradiction. Therefore, $\soc_R(M)=0$. 

4. Notice that every nonzero module  of infinite length has nonzero socle. Therefore, 
 every nonzero submodule of $M$ is of infinite length, by statement 3. 

5. Since the $R$-module $M$ is a Noetherian module (Theorem \ref{14Jun26}.(1)), it has Krull dimension.  Since the $R$-module $M$ is a Noetherian module with $\soc_R(M)=0$, 
 the $R$-module $M$ admits a strictly descending chain of submodules with simple factors. Therefore, $\CK (M)\geq 1$. Since for every nonzero submodule $N$ of $M$, the factor module $M/N$ has finite length  (Theorem \ref{14Jun26}.(2)), $\CK (M/N)=0$. Therefore, $\CK (M)\leq 1$ (as follows from the definition of the Krull dimension of modules). Thus the $R$-module $M$ is a critical module with $\CK (M)=1$. 
\end{proof}

{\bf The greatest common divisor and the list common multiple of elements of a Euclidean module.} Our goal is to define the greatest common divisor and the list common multiple of elements of a Euclidean module. Since every  element of a Euclidean module generated a (cyclic) submodule we have to prove some results about cyclic modules.

For any left ideal $I$ of a ring $R$, the 
{\bf idealizer} of $I$, $\mI_R(I) := \{r \in R \mid Ir \subseteq I\}$, is the largest subring of $R$ containing $I$ as a two-sided ideal. The map
$$
\mI_R(I)/I\ra \text{End}_R(R/I), \;\; \oa :=a+I\mapsto \cdot \oa : R/I\ra R/I, \;\; r+I\mapsto (r+I)\oa = ra+I
$$
is a ring isomorphism. We identify these two rings via the isomorphism.
 We write endomorphisms on the {\em opposite side} of scalars so as to avoid working with opposite rings. When only a single endomorphism is involved, we place it on the left of scalars to simplify the notation.
In particular, the endomorphisms $\End_R(R/I)=\mI_R(I)/I$ are  identified with right  multiplication by the elements of the  ring $\mI_R(I)/I$. Clearly, the $\Z$-module 
$\mI_R(I)/I$ is a subring of the left $R$-module $R/I$.

For any two left ideals $I$ and $J$ of the  ring $R$, let $\mI_R(I, J) := \{r \in R \mid Ir \subseteq J\}$. The map
$$
\mI_R(I, J)/J\ra \text{Hom}_R(R/I, R/J), \;\; \ob :=b+J\mapsto \cdot \ob : R/I\ra R/J, \;\; r+I\mapsto (r+I)\ob +J= rb+J
$$
is a $\Z$-isomorphism. We identify these two  $\Z$-modules via the isomorphism. Clearly, the $\Z$-module  
$\mI_R(I,J)/J$ is a $(\mI_R(I)/I, \mI_R(J)/J)$-bimodule where the bimodule structure is given by the rule: For all elements $(\oa, \bc)\in (\mI_R(I)/I, \mI_R(J)/J)$ and $\ob \in \mI_R(I,J)/J$, $(\cdot \oa)(\cdot \ob)(\cdot \bc)=\cdot\overline{abc}$, i.e.
$$
R/I\stackrel{\cdot \oa}{\ra} R/I \stackrel{\cdot \ob}{\ra}R/J \stackrel{\cdot \bc}{\ra}R/J, \;\; r+I\mapsto rabc+J.
$$
 Lemma \ref{a16Jun26} describes automorphisms
 between cyclic modules.  
 
\begin{lemma}\label{a16Jun26}%\marginpar{a16Jun26}

Let $R$ be a  ring and $I$ and $J$ be its left ideals. Then the $R$-modules $R/I$ and $R/J$ are isomorphic iff there exist elements $x,y\in R$ such that $Ix\subseteq J$, $Jy\subseteq I$, $xy\equiv 1 \mod I$ and $yx\equiv 1 \mod J$. Given the elements $x$ and $y$ as above, the map $f=f_{x,y}: R/I\ra R/J$, $a+I\mapsto ax+J$ is an $R$-module isomorphism with inverse  $b+J\mapsto by+I$ and all isomorphisms are obtained in this way. 
\end{lemma}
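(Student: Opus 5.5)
The plan is to reduce everything to the standard description of homomorphisms between cyclic modules recalled just before the statement: $\text{Hom}_R(R/I,R/J)$ is identified with $\mI_R(I,J)/J$ via $\ob\mapsto \cdot\ob$.

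First, every $R$-homomorphism $f:R/I\ra R/J$ is determined by $f(1+I)$. Write $f(1+I)=x+J$ for some $x\in R$. Then $f(a+I)=ax+J$ for all $a\in R$. Well-definedness forces $Ix\subseteq J$, since $f(i+I)=f(0)=0$ for $i\in I$. Conversely, for any $x$ with $Ix\subseteq J$, the rule $a+I\mapsto ax+J$ is a well-defined $R$-homomorphism. Symmetrically, every homomorphism $g:R/J\ra R/I$ has the form $b+J\mapsto by+I$ for some $y$ with $Jy\subseteq I$.

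Next, compose the two maps:
$$
(g\circ f)(a+I)=axy+I \;\;{\rm and}\;\; (f\circ g)(b+J)=byx+J.
$$
The composite $g\circ f$ equals $\id_{R/I}$ if and only if it fixes the generator $1+I$, that is, $xy\equiv 1\mod I$. Likewise, $f\circ g=\id_{R/J}$ if and only if $yx\equiv 1\mod J$.

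Both directions of the criterion now follow.
\begin{itemize}
\item If $x,y$ satisfy the four conditions, then $f_{x,y}$ and $b+J\mapsto by+I$ are well-defined homomorphisms that are mutually inverse. Hence $R/I\simeq R/J$, and the stated inverse is correct.
\item Conversely, let $f$ be any isomorphism. Take $x$ with $f(1+I)=x+J$, and $y$ with $f^{-1}(1+J)=y+I$. The first step gives $Ix\subseteq J$ and $Jy\subseteq I$. The second step gives the two congruences. Moreover $f=f_{x,y}$, so every isomorphism arises this way.
\end{itemize}

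There is no real obstacle here. The only point needing care is the identity check: it suffices to verify that $g\circ f$ and $f\circ g$ fix the cyclic generators. This holds because both composites are $R$-linear maps out of cyclic modules, so they are determined by their values on $1+I$ and $1+J$ respectively.
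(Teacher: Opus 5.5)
Your proposal is correct and follows essentially the same route as the paper: both write homomorphisms between the cyclic modules as right multiplication $a+I\mapsto ax+J$ with $Ix\subseteq J$, compute the composites $r+I\mapsto rxy+I$ and $r+J\mapsto ryx+J$, and read off the two congruences. You simply make explicit two points the paper leaves implicit: the well-definedness check, and the fact that an endomorphism of a cyclic module is the identity iff it fixes the generator.
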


\begin{proof} The $R$-modules $R/I$ and $R/J$ are isomorphic iff there exist an isomorphism $f: R/I\ra R/J$, $a+I\mapsto ax+J$ with inverse $f^{-1}: R/J\ra R/I$, $b+J\mapsto by+I$ for     some elements $x,y\in R$ such  $Ix\subseteq J$, $Jy\subseteq I$ and  for all elements $r\in R$, 
$$
r+I = f^{-1}f(r+I)=rxy+I\;\; {\rm and}\;\; 
r+J = ff^{-1}(r+J)=ryx+J,
$$
i.e. $xy\equiv 1 \mod I$ and $yx\equiv 1 \mod J$, and the lemma follows.
\end{proof}

Corollary  \ref{b16Jun26} describes automorphism  between cyclic submodules of a cyclic module in term of elements of ring. 

\begin{corollary}\label{b16Jun26}%\marginpar{b16Jun26}

Let $R$ be a  ring, $I$  be its left ideal and $M=R/I=R\overline{1}$ where $\overline{1}:=1+I$ and $\oa:= a\overline{1}=a+I$ for all elements $a\in R$.
\begin{enumerate}

\item Given elements $a,b\in R$, then $R\oa \simeq R/\ga$ where $\ga :=\ann_R(\oa)$ and $R\ob \simeq R/\gb$ where $\gb:=\ann_R(\ob)$ and 
there is an  $R$-module isomorphism $f:R\oa\ra R\ob$ iff  there exist elements $x,y\in R$ such that $\ga x\subseteq \gb$, $\gb y\subseteq \ga$,
$xy\equiv 1 \mod \ga$ and $yx\equiv 1 \mod\gb$ iff $\ga x\subseteq \gb$, $\gb y\subseteq \ga$,
$$
(1-xy)a=0\;\; {\rm and}\;\; (1-yx)b=0.
$$
Given the elements $x$ and $y$ as above, the map 
$$
f=f_{x,y}: R\oa\simeq R/\ga\ra R/\ob\simeq R/\gb , \;\; a+\ga\mapsto ax+\gb
$$ 
is an $R$-module isomorphism with inverse  $b+\gb\mapsto by+\ga$ and all isomorphisms are obtained in this way.

\item Given elements $a,b\in R$, then $R\oa =R\ob$ iff there exist elements $u,v\in R$ such that 
$$a\equiv ub\mod I\;\; {\rm  and}\;\; b\equiv va\mod I.
$$ 
The elements $u$ and $v$ necessarily satisfy the following conditions: $(1-uv)a\equiv 0\mod I$ and $(1-vu)b\equiv 0\mod I$. 
\end{enumerate}
\end{corollary}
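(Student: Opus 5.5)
Both parts reduce to Lemma \ref{a16Jun26} applied to suitable cyclic modules, together with the identity $\oa = a\overline{1}$ in $M=R/I$.

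\textbf{Statement 1.} Consider the $R$-epimorphism $R\ra R\oa$, $r\mapsto r\oa = ra+I$. Its kernel is $\ann_R(\oa)=\ga$, which gives $R\oa\simeq R/\ga$ via $r+\ga\mapsto r\oa$; similarly $R\ob\simeq R/\gb$. An isomorphism $R\oa\ra R\ob$ is therefore the same as an isomorphism $R/\ga\ra R/\gb$. Lemma \ref{a16Jun26}, with $I:=\ga$ and $J:=\gb$, then gives existence of $x,y$ with
\[
\ga x\subseteq \gb,\quad \gb y\subseteq \ga,\quad xy\equiv 1 \mod \ga,\quad yx\equiv 1 \mod \gb .
\]
It also gives the explicit form $f_{x,y}$ and shows that every isomorphism arises this way. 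For the final reformulation I will unwind the definition: $1-xy\in \ga=\ann_R(\oa)$ means $(1-xy)\oa=0$. In the statement this is written $(1-xy)a=0$, which I interpret as the equality $(1-xy)\oa=0$ in $M$, i.e. $(1-xy)a\in I$. Likewise $1-yx\in\gb$ means $(1-yx)\ob=0$. The only care needed here is to keep track of which elements live in $R$ and which in $M$.

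\textbf{Statement 2.} Since $R\oa$ and $R\ob$ are submodules of $M$, $R\oa=R\ob$ holds iff $\oa\in R\ob$ and $\ob\in R\oa$. This means there are $u,v\in R$ with $\oa=u\ob$ and $\ob=v\oa$, i.e. $a-ub\in I$ and $b-va\in I$. For the necessary conditions, substitute one relation into the other: $\oa=u\ob=uv\oa$, so $(1-uv)\oa=0$, i.e. $(1-uv)a\equiv 0\mod I$. Symmetrically $(1-vu)b\equiv 0 \mod I$.

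Nothing here is a genuine obstacle; the argument is entirely formal. The only point to state carefully is that the congruences ``mod $\ga$'' hold in $R$, while the annihilator conditions are equalities in $M=R/I$.
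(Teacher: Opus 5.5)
Your proposal is correct and follows the paper's proof: for part 1 you identify $R\oa\simeq R/\ga$ and $R\ob\simeq R/\gb$ and apply Lemma \ref{a16Jun26}; for part 2 you unwind $\oa\in R\ob$, $\ob\in R\oa$ and substitute one relation into the other. You also read ``$(1-xy)a=0$'' as the equality $(1-xy)\oa=0$ in $M$, i.e. $(1-xy)a\in I$, which is the reading that makes the stated equivalence true; the literal equality in $R$ would not be equivalent to $xy\equiv 1 \mod \ga$.
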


\begin{proof}1.  Clearly,   for the given elements $a,b\in R$, we have  isomorphisms of $R$-modules  $R\oa \simeq R/\ann_R(\oa)$ and $R\ob \simeq R/\ann_R(\ob)$. Now, the corollary follows from Lemma \ref{a16Jun26}.

2.  For the given elements $a,b\in R$, the equality $R\oa=R\ob$ holds iff $\oa= u\ob$ and $\ob=v\oa$ for some elements $u,v\in R$ iff $a\equiv ub\mod I$ and $b\equiv va\mod I
$. Clearly, these two equivalences  yield 
the equivalence $(1-uv)a\equiv 0\mod I$ and $(1-vu)b\equiv 0\mod I$.
\end{proof}

\begin{definition}
Let $M=R/I=\{ \oa =a+I\, | \, a\in R\}$ be a Euclidean $R$-module 
%of infinite length 
 where $I$ is a left ideal of $R$. For  elements $\oa_1, \ldots , \oa_n\in M$, a generator 
$\gcd (\oa_1, \ldots , \oa_n)$ of the cyclic  submodule $\sum_{i=1}^n R\oa_i$ of $M$   is called a {\bf (left) greatest common divisor} of the elements  $\oa_1, \ldots , \oa_n$. Similarly, a generator 
$\lcm (\oa_1, \ldots , \oa_n)$ of the cyclic  submodule $\bigcap_{i=1}^n R\oa_i$ of $M$   is called a {\bf (left) least common multiple} of the elements  $\oa_1, \ldots , \oa_n$. In the case of right Euclidean modules, a {\bf right greatest common divisor} $\gcd_r$   and a {\bf right least common multiple} $\lcm_r$ are defined in the dual way. 
\end{definition}
Notice that, in general,  the elements $\gcd (\oa_1, \ldots , \oa_n)$ and $\lcm (\oa_1, \ldots , \oa_n)$
are not unique, see Corollary \ref{b16Jun26}.(2). More precisely, by Corollary \ref{b16Jun26}.(2), if elements $g, g'\in M$  are greatest common divisors of the  set of elements $\oa_1, \ldots , \oa_n\in M$ then there exist elements $u,v\in R$ such that 
$$g\equiv ug'\mod I\;\; {\rm  and}\;\; g'\equiv vg\mod I.
$$ 
Similarly, if elements $l, l'\in M$  are little  common multiples  of the  set of elements $\oa_1, \ldots , \oa_n\in M$ then there exist elements $u',v'\in R$ such that 
$$l\equiv u'l'\mod I\;\; {\rm  and}\;\; l'\equiv v'l\mod I.
$$

\begin{proposition}\label{A14Jul26}%\marginpar{A14Jul26}
Let $M$ be a Euclidean $R$-module 
%of infinite length 
 and  $\oa_1, \ldots , \oa_n\in M$. Then for an element $g\in M$ the following statements are equivalent:
 
\begin{enumerate}

\item $g=\gcd (\oa_1, \ldots , \oa_n)$.

\item For each $i=1, \ldots , n$, $\oa_i=r_ig$ for some  $r_i\in R$ and if an element $g'\in M$ satisfies the condition above (i.e. $\oa_i=r_i'g'$ for some elements $r_i'\in R$)  then $g=\alpha g'$ for some element $\alpha \in R$.  

\item For each $i=1, \ldots , n$, $\oa_i=r_ig$ for some  $r_i\in R$ and $g=\sum_{i=1}^ns_i\oa_i$ for  some elements $s_i\in R$. 

\end{enumerate}

In particular,  $\gcd (\oa_1, \ldots , \oa_n)= \sum_{i=1}^ns_i\oa_i$ for  some elements $s_i\in R$.
\end{proposition}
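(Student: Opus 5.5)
Let $N:=\sum_{i=1}^n R\oa_i$. By Proposition \ref{A28May26}.(1), $N$ is a principal submodule of $M$, so a generator $\gcd(\oa_1,\ldots,\oa_n)$ exists. The whole argument then reduces to comparing each of the three conditions with the single equality $N=Rg$, which is the definition in statement 1. I will prove $1\Leftrightarrow 3$ and $1\Leftrightarrow 2$ separately.

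For $1\Leftrightarrow 3$: the equality $N=Rg$ holds iff both inclusions hold. The inclusion $N\subseteq Rg$ holds iff each $\oa_i\in Rg$, that is, iff $\oa_i=r_ig$ for some $r_i\in R$. The inclusion $Rg\subseteq N$ holds iff $g\in N$, that is, iff $g=\sum_{i=1}^n s_i\oa_i$ for some $s_i\in R$. These two conditions together are exactly statement 3. The final assertion of the proposition ("in particular") is then the implication $1\Rightarrow 3$.

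For $1\Rightarrow 2$: the first half of statement 2 is the inclusion $N\subseteq Rg$ already noted. Now suppose $g'\in M$ satisfies $\oa_i=r_i'g'$ for all $i$. Then $N\subseteq Rg'$, so $g\in N\subseteq Rg'$, and hence $g=\alpha g'$ for some $\alpha\in R$.

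For $2\Rightarrow 1$: the first half of statement 2 gives $N\subseteq Rg$. To obtain the reverse inclusion, I apply the universal property in statement 2 to a genuine generator. Let $g_0$ be any generator of $N$, which exists by Proposition \ref{A28May26}; for instance any $g_0\in\CG(N)$ works when $N\neq 0$. Each $\oa_i$ lies in $Rg_0$, so $g_0$ satisfies the hypothesis placed on $g'$. Statement 2 then gives $g=\alpha g_0\in N$, so $Rg\subseteq N$. Hence $Rg=N$, and $g$ is a greatest common divisor.

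There is no real obstacle in this argument. The only points needing care are two. First, the existence of a generator $g_0$ of $N$ is what makes the universal property in statement 2 usable, and that existence is exactly where the Euclidean hypothesis enters, through Proposition \ref{A28May26}. Second, the degenerate case $N=0$ (all $\oa_i=0$) must be checked. There $g=0$ is forced in every statement: in statement 3 because $g$ is a combination of zeros, and in statement 2 by taking $g'=0$. So all three conditions again coincide.
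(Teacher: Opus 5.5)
Your proof is correct and follows the paper's approach: both reduce each condition to the equality $Rg=\sum_{i=1}^n R\oa_i$ and check the two inclusions. Your one refinement is in $2\Rightarrow 1$, where you apply the universal property to an actual generator $g_0$ of $N$ (which exists by Proposition \ref{A28May26}) to obtain $Rg\subseteq N$. The paper's chain of equivalences leaves that step implicit.
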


\begin{proof} $(1\Leftrightarrow 2)$ $g=\gcd (\oa_1, \ldots , \oa_n)$ iff $Rg=\sum_{i=1}^nR\oa_i$ iff $Rg\supseteq R\oa_i$ for $i=1, \ldots , n$ and $Rg= \sum_{i=1}^nR\oa_i$ 
iff for each $i=1, \ldots , n$, $\oa_i=r_ig$ for some  $r_i\in R$ and if an element $g'\in M$ satisfies the condition above (i.e. for each $i=1, \ldots , n$, $\oa_i=r_i'g'$ for some  $r_i'\in R$ )    then $g=\alpha g'$ for some element $\alpha \in R$ (since $g\in \sum_{i=1}^nR\oa_i=\sum_{i=1}^nRr_i'g'\subseteq Rg'$).

$(1\Leftrightarrow 3)$ $g=\gcd (\oa_1, \ldots , \oa_n)$ iff $Rg=\sum_{i=1}^nR\oa_i$ iff $Rg\supseteq R\oa_i$ for $i=1, \ldots , n$ and $Rg\subseteq \sum_{i=1}^nR\oa_i$ iff  for each $i=1, \ldots , n$, $\oa_i=r_ig$ for some  $r_i\in R$ and $g=\sum_{i=1}^ns_i\oa_i$ for  some elements $s_i\in R$.
\end{proof}
In a left Euclidean ring $R$, the left greatest common divisor of elements $a_1, \ldots , a_n$ 
 is the element $d\in R$  
 that left-divides each element $a_i$
 and is maximal among all such left divisors, in the sense that any other common left divisor also left-divides $d$. 
 In left Euclidean domains, left greatest common divisors  always exist and are unique up to left multiplication by a unit. Proposition \ref{A14Jul26} is module-theoretic analogue of these results.

Similarly, in the left Euclidean ring $R$, the left least common multiple of elements 
 $a_1, \ldots , a_n$  is the element $l\in R$ 
 that is left-multiple of each $a_i$
 and is minimal among such left multiples, in the sense that any other common left multiple is itself a left multiple of  $l$. Equivalently, 
$l$  is a generator of the intersection of the principal left ideals $Ra_i$. Proposition \ref{B14Jul26} is module-theoretic analogue of these results.

\begin{proposition}\label{B14Jul26}%\marginpar{B14Jul26}
Let $M$ be a Euclidean $R$-module   
 and  $\oa_1, \ldots , \oa_n\in M$. Then for an element $l\in M$ the following statements are equivalent:
 
\begin{enumerate}

\item $l=\lcm (\oa_1, \ldots , \oa_n)$.

\item For each $i=1, \ldots , n$, $l=\alpha_i\oa_i$ for some  $\alpha_i\in R$ and if an element $l'\in M$ satisfies the condition above (i.e. $l'=\alpha_i'\oa_i$ for some  $\alpha_i'\in R$)  then $l'=\alpha l$ for some element $\alpha \in R$.

\end{enumerate}
\end{proposition}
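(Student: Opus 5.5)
The argument runs parallel to the proof of Proposition \ref{A14Jul26}. It rests on the fact that the cyclic submodule $L:=\bigcap_{i=1}^n R\oa_i$ is principal: by Proposition \ref{A28May26}.(1) every submodule of $M$ is principal, so $L=Rl_0$ for some $l_0\in M$. The definition then says that $l=\lcm(\oa_1,\ldots,\oa_n)$ exactly when $Rl=L$. The plan is to translate $Rl=L$ into the two conditions of statement 2, namely ``$l$ is a common multiple'' and ``every common multiple is a multiple of $l$''.

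$(1\Rightarrow 2)$ Suppose $Rl=L$. Then $l\in L\subseteq R\oa_i$ for every $i$, so $l=\alpha_i\oa_i$ for some $\alpha_i\in R$. Now let $l'\in M$ satisfy $l'=\alpha_i'\oa_i$ for all $i$. Then $l'\in\bigcap_{i=1}^n R\oa_i=L=Rl$, so $l'=\alpha l$ for some $\alpha\in R$.

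$(2\Rightarrow 1)$ Assume statement 2. The first condition gives $l\in R\oa_i$ for each $i$, hence $Rl\subseteq L$. For the reverse inclusion, take any $l'\in L$. Then $l'\in R\oa_i$ for every $i$, so $l'=\alpha_i'\oa_i$ for some $\alpha_i'\in R$, and the second condition gives $l'\in Rl$. Thus $L\subseteq Rl$, so $Rl=L$ and $l$ is a generator of $L$, i.e.\ $l=\lcm(\oa_1,\ldots,\oa_n)$.

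There is no real obstacle here. The only point needing care is that the definition of $\lcm$ presupposes that $L$ is cyclic, which is supplied by Proposition \ref{A28May26}.(1) since $L$ is a submodule of the Euclidean module $M$. The elements $\alpha_i$, $\alpha$ need not be unique, because elements may have nonzero annihilators, but the argument never uses uniqueness.
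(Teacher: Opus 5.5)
Your proof is correct and takes the same approach as the paper: both unwind the definition $Rl=\bigcap_{i=1}^n R\oa_i$ into the two conditions ``$l$ is a common left multiple of the $\oa_i$'' and ``every common left multiple lies in $Rl$''. Your version is slightly more explicit than the paper's one-line chain of equivalences, since you check both inclusions separately and note that cyclicity of the intersection comes from Proposition \ref{A28May26}.(1).
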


\begin{proof} $(1\Leftrightarrow 2)$ $l=\lcm (\oa_1, \ldots , \oa_n)$ iff $Rl=\bigcap_{i=1}^nR\oa_i$ iff $Rl\subseteq R\oa_i$ for $i=1, \ldots , n$ and $Rl= \bigcap_{i=1}^nR\oa_i$ 
iff for each $i=1, \ldots , n$, $l=\alpha_i\oa_i$ for some  $\alpha_i\in R$ and if an element $l'\in M$ satisfies the condition above  (i.e. $l'=\alpha_i'\oa_i$ for some  $\alpha_i'\in R$)  then $l'=\alpha l$ for some element $\alpha \in R$ (since $Rl=\bigcap_{i=1}^nR\oa_i\ni l'$).
\end{proof}
Clearly, 
$$\gcd (\oa_1, \ldots , \oa_n)=\gcd\Big( \gcd (\oa_1, \ldots ,\oa_{n-1}),  \oa_n\Big)\;\; {\rm and}\;\; \lcm (\oa_1, \ldots , \oa_n)=\lcm\Big( \lcm (\oa_1, \ldots ,\oa_{n-1}),  \oa_n\Big).
$$

\begin{definition}
Let $M$ be an l-finite Euclidean $R$-module. For each nonzero element elements $m\in M$, let $|m|:=l_R(M/Rm)$ (by Theorem \ref{14Jun26}.(2), $|m|\in \N$). 
\end{definition}

Let $M$ be an l-finite Euclidean $R$-module and $N$ and $L$ be nonzero submodules of $M$.

\begin{figure}[htbp]
  \centering
  \begin{tikzpicture}[scale=1.0, >=stealth]
    % Define coordinates for the rhombus vertices
    % N+L is at the top, counted clockwise: N+L(top), L(right), N\cap L(bottom), N(left)
    \node (N+L) at (0, 1.5) [circle, fill, inner sep=1.5pt, label=above right:$N+L$] {};
    \node (L) at (2, 0) [circle, fill, inner sep=1.5pt, label=right:$L$] {};
    \node (NL) at (0, -1.5) [circle, fill, inner sep=1.5pt, label=below:$N\cap L$] {};
    \node (N) at (-2, 0) [circle, fill, inner sep=1.5pt, label=left:$N$] {};
    
    % Vertex M above N+L
    \node (M) at (0, 3) [circle, fill, inner sep=1.5pt, label=above:$M$] {};
    
    % Draw the rhombus sides
    \draw[thick] (N+L) -- (L) -- (NL) -- (N) -- (N+L) -- cycle;
    
    % Draw connection from M to N+L with a solid line
    \draw[thick] (M) -- (N+L);
    
  \end{tikzpicture}
 % \caption{%Rhombus $ABCD$ with vertex $M$ above $N+L$.}
  \label{fig:rhombus}
\end{figure}

Suppose that $R=\Z$ and $a,b\in \Z\backslash \{ 0\}$. Theorem \ref{15Jun26} gives two  `additive/logarithmic' analogues of the equality 
$$ab=\gcd (a,b)\lcm(a,b).$$ The first statement in Theorem \ref{15Jun26} is a module-theoretic one and the second statement is given  element-wise. 

\begin{theorem}\label{15Jun26}%\marginpar{15Jun26}
Let $M=R/I=\{ \oa =a+I\, | \, a\in R\}$ be an l-finite Euclidean $R$-module 
%of infinite length 
where $I$ is a left ideal of $R$ Then: 
\begin{enumerate}

\item  For all nonzero submodules $N$ and $L$  of $M$,  
$$l_R(M/N)+l_R(M/L)=l_R(M/(N+L))+l_R(M/N\cap L).$$ 

\item For all nonzero elements $\oa,\ob\in M\backslash \{ 0\}$, $|\oa| +|\ob|=|\gcd (\oa, \ob)|+|\lcm (\oa, \ob)|$.

\end{enumerate}
\end{theorem}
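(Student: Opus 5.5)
The plan is to reduce statement 1 to the second isomorphism theorem together with additivity of length on short exact sequences, and then to read off statement 2 from statement 1 using the definition of $\gcd$ and $\lcm$.

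\emph{Finiteness of all lengths.} Since $M$ is l-finite Euclidean, Theorem \ref{14Jun26}.(2) gives $l_R(M/K)<\infty$ for every nonzero submodule $K$ of $M$. Apply this to $N$, $L$ and $N+L$; all three are nonzero. The one point needing care is $N\cap L$, which could a priori be zero. If $l_R(M)=\infty$, then $M$ is uniform by Lemma \ref{b14Jun26}.(1), so $N\cap L\neq 0$ and $l_R(M/N\cap L)<\infty$. If $l_R(M)<\infty$, every quotient of $M$ has finite length. In either case all four lengths in statement 1 are finite natural numbers, and subtraction is legitimate.

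\emph{Statement 1.} Use the second isomorphism theorem, $(N+L)/L\simeq N/(N\cap L)$. From the chains $N\cap L\subseteq N\subseteq M$ and $L\subseteq N+L\subseteq M$, additivity of length gives
$$l_R(M/N\cap L)=l_R(M/N)+l_R(N/N\cap L),\qquad l_R(M/L)=l_R(M/(N+L))+l_R((N+L)/L).$$
Since the two middle lengths are equal by the isomorphism, adding $l_R(M/N)$ to the second equality and substituting the first yields $l_R(M/N)+l_R(M/L)=l_R(M/(N+L))+l_R(M/N\cap L)$.

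\emph{Statement 2.} Put $N=R\oa$ and $L=R\ob$. By definition $R\gcd(\oa,\ob)=N+L$ and $R\lcm(\oa,\ob)=N\cap L$. We need $\lcm(\oa,\ob)\neq 0$ so that $|\cdot|$ is defined; this is exactly the nonvanishing of $N\cap L$ shown above, at least in the infinite-length case. In the finite-length case we may simply interpret $|0|=l_R(M)$, or note that the same formula still holds. Statement 1 then translates directly into $|\oa|+|\ob|=|\gcd(\oa,\ob)|+|\lcm(\oa,\ob)|$, because $|m|=l_R(M/Rm)$.

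The only real obstacle is ensuring $N\cap L\ne 0$, equivalently the finiteness of $l_R(M/N\cap L)$, and this is handled by uniformity (Lemma \ref{b14Jun26}).
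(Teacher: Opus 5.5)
Your proof is correct and follows the paper's argument. You first get finiteness of $l_R(M/N\cap L)$ from uniformity (Lemma \ref{b14Jun26}.(1)) in the infinite-length case, then apply the second isomorphism theorem with additivity of length, and obtain statement 2 by taking $N=R\oa$, $L=R\ob$; you also correctly note a point the paper skips, that in the finite-length case $N\cap L$ can be zero, so $|\lcm(\oa,\ob)|$ must be read as $l_R(M)$.
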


\begin{proof} 1. (i) $L_R(M/N\cap L)<\infty$: 
 The statement is obvious if the $R$-module $M$ is of finite length. Suppose that the $R$-module $M$ has infinite length. 
 By the assumptions, the $R$-modules $N$ and $L$ are nonzero, then so is their intersection since the $R$-module $M$ is a uniform module, by  Lemma  \ref{b14Jun26}.(1).

(ii) $l_R(M/N)+l_R(M/L)=l_R(M/(N+L))+l_R(M/N\cap L)$: By statement (i), $L_R(M/N\cap L)<\infty$. Therefore, the following lengths are  finite:
\begin{eqnarray*}
i&:=&l_R(M/N\cap L),\\
j &:=&l_R((N+L)/N)=l_R(L/N\cap L)\;\;  \text{(since $(N+L)/N)\simeq L/N\cap L$)},\\
k&:=&l_R((N+L)/L)=l_R(N/N\cap L)\;\; \text{ (since $(N+L)/L)\simeq N/N\cap L$)}.
\end{eqnarray*}
It follows from the diagram above that 
$$ 
l_R(M/N)=i+j, \;\; l_R(M/L)=i+k, \;\; l_R(M/(N+L))=i\;\; {\rm and}\;\; l_R(M/N\cap L)=i
+j+k.
$$
Hence, $l_R(M/N)+l_R(M/L)=2i+j+k=l_R(M/(N+L))+l_R(M/N\cap L)$.

2. Statement 2 is a particular case of statement 1 where $N=R\oa$,  $L=R\ob$, $N+L=R
\gcd (\oa, \ob)$ and $N\cap L=R
\lcm (\oa, \ob)$.
\end{proof}

{\bf Euclidean rings and modules and their localizations.} For a ring $R$ and an ideal $\ga$, let $\Den_l(R, \ga)$ be the set of left denominator sets $S$ of $R$ with $ \ass_l(S):=\{ r\in R\, | \, sr=0$ for some $s\in S\}=\ga$. Similarly,  the set  $\Den_r(R, \ga)$ of right denominator sets of $R$ is defined. Their intersection $\Den (R, \ga):=\Den_l(R, \ga) \cap \Den_r(R, \ga)$ is the set of denominator sets of $R$.

Let $S$ be a left denominator set of $R$ and  $M$ be an $R$-module. Then $S^{-1}M$ is an $S^{-1}R$-module. The $R$-module $M$ contains the $S$-{\em torsion} submodule $\tor_S(M):=\{ m\in M\, | \, sm=0$ for some element $s\in S\}$. Then the factor module $\bM=M/\tor_S(m)$ is  an essential submodule of the $R$-module $S^{-1}M$ such that $S^{-1}\bM\simeq S^{-1}M$. A submodule 
 $N$ of the $R$-module $\bM$ is called an $S$-{\em saturated} submodule if $sm\in N$ for some elements $s\in S$ and $m\in N$ implies  $m\in N$. Let $\Sub_{R,S}(\bM)$ be the set of $S$-saturated submodule of the $R$-module $M$ and $\Sub_{S^{-1}R}(S^{-1}M)$ be the set of submodules of the $S^{-1}R$-module $S^{-1}M$. Then the map
%\marginpar{Sub-RSM}
\begin{equation}\label{Sub-RSM}
\Sub_{R,S}(\bM)\ra \Sub_{S^{-1}R}(S^{-1}M), N\mapsto S^{-1}N
\end{equation}
is a  bijection with inverse $\CN\mapsto \bM\cap \CN$.

 Lemma \ref{a8Jun26} shows that localizations of Euclidean modules have properties very similar to Euclidean modules. 
 
\begin{lemma}\label{a8Jun26}%\marginpar{a8Jun26}

Let $(M, d)$ be a Euclidean $R$-module and   $S$ be a left denominator set of $R$. Then the $S^{-1}R$-module $S^{-1}M$ is a principal submodule  module that contains an essential Euclidean submodule $(\bM=  M/\tor_S(M), d_{\bM})$ such that $S^{-1}\bM\simeq S^{-1}M$ and  the set $\CG (\bM):=\{ m\in \bM\, | \, d_{\bM}(m)=l \}$, where $l:=\min \{ d_{\bM}(m)\, | \, 0\neq m\in \bM \}$, of 1-generators of the $R$-module $\bM$ 
is also a set of 1-generators of the  $S^{-1}R$-module $S^{-1}M$. The map
$
\Sub_{R,S}(\bM)\ra \Sub_{S^{-1}R}(S^{-1}M)$,  $N\mapsto S^{-1}N $
is a  bijection with inverse $\CN\mapsto \bM\cap \CN$, see (\ref{Sub-RSM}). 

\end{lemma}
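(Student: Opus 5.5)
The plan is to reduce everything to the torsion-free quotient $\bM=M/\tor_S(M)$ and then transfer the Euclidean structure along the bijection (\ref{Sub-RSM}).

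First, $\tor_S(M)$ is a submodule of $M$; this uses the left Ore condition to combine annihilating elements of $S$. By Lemma \ref{a7Jun26}.(2), $\bM$ is a Euclidean $R$-module with the induced function $d_{\bM}(m+\tor_S(M))=\min\{d(m+t)\,|\,t\in \tor_S(M)\}$. Proposition \ref{A28May26}.(2) then shows that every element of $\CG(\bM)$ is a 1-generator of the $R$-module $\bM$, so $\bM=Rv$ for any $v\in\CG(\bM)$.

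Next I use the standard facts quoted before the lemma. The natural map $\bM\ra S^{-1}M$, $m+\tor_S(M)\mapsto 1^{-1}m$, is injective, and its image is an essential $R$-submodule. Essentiality holds because every element $s^{-1}m\neq 0$ satisfies $s\cdot s^{-1}m=1^{-1}m$, which is a nonzero element of the image. We also have $S^{-1}\bM\simeq S^{-1}M$, since localization is exact and $S^{-1}\tor_S(M)=0$. Every element of $S^{-1}M$ has the form $s^{-1}\bar m$ with $\bar m\in\bM$. Writing $\bar m=rv$ gives $s^{-1}\bar m=(s^{-1}r)v$. Hence $S^{-1}M=S^{-1}R\,v$, so each $v\in\CG(\bM)$ generates $S^{-1}M$ as an $S^{-1}R$-module.

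The principal-submodule claim, that every $S^{-1}R$-submodule of $S^{-1}M$ is cyclic, comes from the bijection. Given a submodule $\CN\subseteq S^{-1}M$, the intersection $N=\bM\cap\CN$ is an $S$-saturated submodule of $\bM$ with $S^{-1}N=\CN$. Indeed, for $s^{-1}m\in\CN$ we have $1^{-1}m=s(s^{-1}m)\in\CN\cap\bM$. By Proposition \ref{A28May26}.(1) applied to the Euclidean module $\bM$, $N=Rw$ for some $w$, and therefore $\CN=S^{-1}R\,w$.

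The bijection itself, with inverse $\CN\mapsto\bM\cap\CN$, is the general fact (\ref{Sub-RSM}), which I would simply cite. If a check is required, it is routine. Saturation gives $\bM\cap S^{-1}N=N$: if $s^{-1}n=1^{-1}m$, then $t m\in N$ for some $t\in S$ by the Ore condition and torsion-freeness, so $m\in N$. The inclusion $S^{-1}(\bM\cap\CN)=\CN$ was shown above.

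The main obstacle is bookkeeping rather than depth. The delicate point is the torsion-freeness of $\bM$, that is $\tor_S(\bM)=0$, which is needed for the injectivity of $\bM\ra S^{-1}M$ and for the saturation argument. I would prove it directly: if $s(m+\tor_S(M))=0$, then $t s m=0$ for some $t\in S$, and $ts\in S$ because $S$ is multiplicative, so $m\in\tor_S(M)$.
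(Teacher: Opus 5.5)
Your proof is correct and follows the same route as the paper: $\bM$ is Euclidean by Lemma~\ref{a7Jun26}.(2), $S^{-1}\bM\simeq S^{-1}M$, and hence every element of $\CG(\bM)$ is a 1-generator of $\bM$ and so generates $S^{-1}M$ over $S^{-1}R$. You also verify several points that the paper's terse proof leaves implicit: the torsion-freeness and essentiality of $\bM$, the bijection (\ref{Sub-RSM}), and the fact that every submodule $\CN$ of $S^{-1}M$ is cyclic, which you obtain from $\CN=S^{-1}(\bM\cap\CN)$ and Proposition~\ref{A28May26}.(1).
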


\begin{proof}  The $R$-module $(\bM, d_{\bM})$ is a Euclidean $R$-module as a factor module of the Euclideam $R$-module $M$ (Lemma \ref{a7Jun26}.(2)). The $S^{-1}R$-modules $S^{-1}\bM$ and $S^{-1}M$ are isomorphic. Therefore, the set $\CG (\bM)$ of 1-generators of the $R$-module $\bM$ 
is also a set of 1-generators of the  $S^{-1}R$-module $S^{-1}M$.
\end{proof}

In general, for a Euclidean $R$-module $(M, d)$ and a left denominator set $S$ of $R$, the degree function $d$ cannot be extended to the localization $S^{-1}M$. For example, $R=K[x]$ is a polynomial algebra over a field $K$,  $(M=K[x], \deg)$ and $S=\{x^i\, | \, i\in \N\}$. Then $S^{-1}K[x]=K[x^{\pm 1}]$ and the concept of the degree $\deg$ makes no sense for the Laurent polynomial algebra  $K[x^{\pm 1}]$. 

Theorem \ref{8Jun26} identifies several sufficient conditions for extending  Euclidean function to  localization ring.

\begin{theorem}\label{8Jun26}%\marginpar{8Jun26}
\begin{enumerate}

\item Let $(R,d)$ be a left (resp., right)  Euclidean  ring,  $S\in \Den(R, \ga)$, $\pi :R\ra \bR :=R/\ga$, $r\mapsto \br :=r+\ga$ and $\bS :=\pi (S)$. Then $(\bR:=R/\ga, d_{\bR})$ is a left  (resp., right) Euclidean ring (Lemma \ref{a7Jun26}.(2)). Suppose that $d_{\bR}(sr)=d_{\bR}(rs)=d_{\bR}(r)$  for all elements $s\in \bS$ and $r\in \bR$. Then the ring ($S^{-1}R\simeq \bS^{-1}\bR \simeq \bR\bS^{-1}, \d )$ is a left  (resp., right) Euclidean  ring where $\d (s^{-1}r):=
d_{\bR}(r)$  (resp., $\d (rs^{-1}):=
d_{\bR}(r)$) for all elements $s\in \bS$ and $r\in \bR$. Furthermore, $\d (rs^{-1}):=
d_{\bR}(r)$ (resp., $\d (s^{-1}r):=
d_{\bR}(r)$ ) for all elements $s\in \bS$ and $r\in \bR$.

\item Let $(R,d)$ be a left Euclidean  ring,  $S\in \Den_l(R, \ga)$, $\pi :R\ra \bR :=R/\ga$, $r\mapsto \br :=r+\ga$ and $\bS :=\pi (S)$. Then $(\bR:=R/\ga, d_{\bR})$ is a left Euclidean ring (Lemma \ref{a7Jun26}.(2)). Suppose that $S$ is a right saturated left denominator set (that is the inclusion $st^{-1}\in \bR$, where $s,t\in \bS$, implies  $st^{-1}\in \bS$)  and $d_{\bR}(sr)=d_{\bR}(r)$  for all elements $s\in \bS$ and $r\in \bR$. Then the ring ($S^{-1}R, \d )$ is a left Euclidean  ring where $\d (s^{-1}r):=
d_{\bR}(r)$ for all elements $s\in \bS$ and $r\in \bR$.

\item  Let $(R,d)$ be a right Euclidean  ring,  $S\in \Den_r(R, \ga)$, $\pi :R\ra \bR :=R/\ga$, $r\mapsto \br :=r+\ga$ and $\bS :=\pi (S)$. Then $(\bR:=R/\ga, d_{\bR})$ is a right  Euclidean ring (Lemma \ref{a7Jun26}.(2)). Suppose that  that $S$ is a left  saturated right denominator set (that is the inclusion $t^{-1}s\in \bR$, where $s,t\in \bS$, implies  $t^{-1}s\in \bS$)  and  
 $d_{\bR}(rs)=d_{\bR}(r)$ for all elements $s\in \bS$ and $r\in \bR$. Then the ring ($ \bR\bS^{-1}, \d )$ is a right Euclidean  ring where $\d (rs^{-1}):=
d_{\bR}(r)$ for all elements $s\in \bS$ and $r\in \bR$.  

\end{enumerate}
\end{theorem}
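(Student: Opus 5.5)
Statement 1 is the heart of the theorem; statements 2 and 3 are variants of it. The plan for statement 1 has four steps.

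\textbf{Step 1: reduce to $\bR$.} Since $S\in \Den(R,\ga)$, there are natural isomorphisms $S^{-1}R\simeq \bS^{-1}\bR\simeq \bR\bS^{-1}$. Here $\bS$ is a denominator set of $\bR$ consisting of regular elements, so $\bR$ embeds in $S^{-1}R$. Every element of $S^{-1}R$ can be written $s^{-1}r$ with $s\in\bS$, $r\in\bR$.

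\textbf{Step 2: $\d$ is well defined.} Suppose $s^{-1}r=t^{-1}r'$. By the left Ore condition choose $s_1\in\bS$ and $u\in\bR$ with $s_1s=ut$. Then $s_1 r=ut\,t^{-1}r'=ur'$. I expect $u$ to be forced into $\bS$, or at least to be handled by the invariance hypothesis, as follows. Pass to the right-fraction form $s^{-1}r=r_1s_2^{-1}$, which uses that $S$ is also a right denominator set. Then use the two invariances:
\[
d_{\bR}(r)=d_{\bR}(s r)=\cdots .
\]
A cleaner route: $s^{-1}r=t^{-1}r'$ gives $r'=ts^{-1}r$. Write $ts^{-1}=s_3^{-1}w$, so $s_3r'=wr$. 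Since $ts^{-1}$ is a unit of $S^{-1}R$, the element $w$ is a unit modulo localisation. The main obstacle is exactly this point: showing that elements such as $ts^{-1}$, rewritten inside $\bR$, preserve $d_{\bR}$.

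To handle it, I would compare both sides with a common denominator. Choose $c\in\bS$ with $c=as=bt$ for some $a,b\in\bR$, which exists by Ore. Then $as^{-1}r=bt^{-1}r'$ becomes an equality involving $a r=b r'$ after clearing, with $a=cs^{-1}$ and $b=ct^{-1}$. Since $\bS$ is closed under multiplication and $c\in\bS$, the two-sided invariance $d_{\bR}(xs)=d_{\bR}(sx)=d_{\bR}(x)$ should let me transfer the degree: $d_{\bR}(cs^{-1}r)$ compared with $d_{\bR}(r)$. I would try proving $d_{\bR}(ar)=d_{\bR}(r)$ when $as\in\bS$ via $d_{\bR}(ar)=d_{\bR}(ar s')$ for suitable $s'$, exploiting both Ore conditions. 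This is the delicate step.

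\textbf{Step 3: division property.} Take $u=s^{-1}a$ and $v=t^{-1}b$ with $v\neq 0$, so $b\neq 0$ in $\bR$. Divide in $\bR$: $a=qb+r$ with $r=0$ or $d_{\bR}(r)<d_{\bR}(b)$. Then
\[
u=s^{-1}a=(s^{-1}qt)\,t^{-1}b+s^{-1}r,
\]
and $\d(s^{-1}r)=d_{\bR}(r)<d_{\bR}(b)=\d(v)$. The image of $\d$ is contained in $\im(d_{\bR})$, which is Artinian. This proves the left case; the right case is symmetric, writing $a=bq+r$ and using $as^{-1}$.

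\textbf{Step 4: the ``Furthermore'' clause.} Rewrite $rs^{-1}=t^{-1}r'$ with $tr=r's$. Then the two-sided invariance gives
\[
d_{\bR}(r)=d_{\bR}(tr)=d_{\bR}(r's)=d_{\bR}(r').
\]
This same computation is the model for Step 2.

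\textbf{Statements 2 and 3.} Statement 2 follows Steps 2 and 3 using only left fractions. There, well-definedness uses right saturation: an equality $s^{-1}r=t^{-1}r'$ leads to $s_1s=ut$ with $u=s_1st^{-1}\in\bR$, so $u\in\bS$. Then $s_1r=ur'$ together with left invariance gives $d_{\bR}(r)=d_{\bR}(r')$. Statement 3 is the mirror image of statement 2.
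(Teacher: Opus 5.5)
Your plan follows the paper's structure, and most of it is correct: the reduction to $\bS^{-1}\bR$, the division step via $q=s^{-1}q't$ and remainder $s^{-1}r'$, the \emph{Furthermore} clause from $tr=r's$, and statement 2, where right saturation puts $s_1st^{-1}$ in $\bS$. The gap is Step 2 of statement 1, which you leave open as the \emph{delicate step}. None of your routes is completed, and your first hope is false in general. Without saturation, $u=s_1st^{-1}\in\bR$ need not lie in $\bS$; that is exactly why statement 2 assumes right saturation. So you cannot argue through $u\in\bS$. Nor can Step 4 be cited as written, because it already evaluates $\d(t^{-1}r')$ and so presupposes well-definedness.

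The missing step is short. It uses the hypothesis you have not yet exploited: $\bS$ is also a right denominator set of $\bR$. Given $x=s^{-1}r=t^{-1}r'$, write this single element as a right fraction $x=r_1s_2^{-1}$ with $r_1\in\bR$ and $s_2\in\bS$. Multiply $s^{-1}r=r_1s_2^{-1}$ by $s$ on the left and by $s_2$ on the right to get $rs_2=sr_1$; in the same way $r's_2=tr_1$. Two-sided invariance then gives
\[
d_{\bR}(r)=d_{\bR}(rs_2)=d_{\bR}(sr_1)=d_{\bR}(r_1)=d_{\bR}(tr_1)=d_{\bR}(r's_2)=d_{\bR}(r').
\]
This is exactly the paper's argument: your Step 4 computation, applied twice to a common right fraction. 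Your common-denominator idea, proving $d_{\bR}(ar)=d_{\bR}(r)$ whenever $as\in\bS$, is true under the hypotheses of statement 1. However, its proof goes through the same right-fraction trick, so it is only a detour.
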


\begin{proof} 1.  Suppose  that $R$ is a left Euclidean ring.

(i) {\em The map $\d$ is a well-defined map}: By the definition, $\d (s^{-1}a)=d_{\bR}(a)$ for all elements $s\in \bS$ and $a\in \bR$. To prove the statement (i),  we have to show that if $s^{-1}a=t^{-1}b$ for some elements $s,t\in \bS$ and $a,b\in \bR$ then $d_{\bR}(a)=d_{\bR}(b)$. Since $\bS\in \Den_r(\bR, 0)$,
$$
t^{-1}b=b_1s_1^{-1}
$$
 for some elements $b_1\in \bR$ and $s_1\in \bS$. Then the equalities $s^{-1}a=b_1s_1^{-1}$ and $t^{-1}b=b_1s_1^{-1}$ are equivalent to the equalities $as_1=sb_1$
 and $bs_1=tb_1$. Then 
$$
d_{\bR}(a)=d_{\bR}(as_1)=d_{\bR}(sb_1)=d_{\bR}(b_1)=
d_{\bR}(tb_1)=d_{\bR}(bs_1)=d_{\bR}(b),
$$
as required.

(ii)  $\d (as^{-1})=
d_{\bR}(a)$ {\em for all elements $s\in \bS$ and} $a\in \bR$: Since $\bS\in \Den_l(\bR, 0)$, $as^{-1}=s_1^{-1}a_1$ for some elements $s_1\in \bS$ and $a_1\in \bR$. Therefore, $s_1a=a_1s$ and
$$
\d (as^{-1})=\d (s_1^{-1}a_1)=d_{\bR}(a_1)=d_{\bR}(a_1s)=d_{\bR}(s_1a)=d_{\bR}(a).
 $$

(iii) {\em The map $\d$ is a Euclidean map for the ring $\bS^{-1}\bR$}: Given elements $s^{-1}a, t^{-1}b\in \bS^{-1}\bR$, where $s,t\in \bS$ and $a,b\in \bR$, we have to find elements $q, r\in \bS^{-1}\bR$ such that 
$$ s^{-1}a=q t^{-1}b+r \;\; {\rm and}\;\;  \d(r)<\d (t^{-1}b).
$$
The ring $(\bR, d_{\bR})$ is an Euclidean ring. Therefore, there exist  elements $q', r'\in \bR$ such that 
$$ a=q' b+r' \;\; {\rm and}\;\;  \d(r')<\d (b).
$$
Now, $s^{-1}a=s^{-1}q' t \cdot t^{-1}b+s^{-1}r'= q t^{-1}b+r$ where
$q:=s^{-1}q' t$ and $r:=s^{-1}r'$, and 
$$
\d (r)=\d (s^{-1}r')=d_{\bR} (r')<d_{\bR}(b)=\d (t^{-1}b),
$$
as required. 

So, the result holds for left Euclidean rings. Notice that the assumptions of statement 1 are left-right symmetric.
 The opposite ring of a {\em left} Euclidean ring is a {\em right} Euclidean ring. Let $R$ be a right Euclidean ting. Then applying the result to its opposite ring, which is a left Euclidean, we obtain the result for all right Euclidean rings. 

2. (i) {\em The map $\d$ is a well-defined map}: By the definition, $\d (s^{-1}a)=\d_{\bR}(a)$ for all elements $s\in \bS$ and $a\in \bR$. To prove the statement (i),  we have to show that if $s^{-1}a=t^{-1}b$ for some elements $s,t\in \bS$ and $a,b\in \bR$ then $d_{\bR}(a)=d_{\bR}(b)$. Since $\bS\in \Den_l(\bR, 0)$,
$$
st^{-1}=s_1^{-1}t_1
$$
 for some elements $t_1\in \bR$ and $s_1\in \bS$. Since the left denominator set $\bS$ is right saturated and $s,s_1,t\in \bS$, the inclusion  $t_1=s_1st^{-1}\in  \bR $ implies the inclusion $t_1=s_1st^{-1}\in  \bS $. 
 Now, $a=st^{-1}b=s_1^{-1}t_1b$, and so $s_1a=t_1b$. Therefore,
 $$d_{\bR}(a)=   d_{\bR}(s_1a)=d_{\bR}(t_1b)=d_{\bR}(b),
 $$
 as required.
 
 (ii) {\em The map $\d$ is a Euclidean map for the ring $\bS^{-1}\bR$}: Given elements $s^{-1}a, t^{-1}b\in \bS^{-1}\bR$, where $s,t\in \bS$ and $a,b\in \bR$, we have to find elements $q, r\in \bS^{-1}\bR$ such that 
$$ s^{-1}a=q t^{-1}b+r \;\; {\rm and}\;\;  \d(r)<\d (t^{-1}b).
$$
The ring $(\bR, d_{\bR})$ is an Euclidean ring. Therefore, there exist  elements $q', r'\in \bR$ such that 
$$ a=q' b+r' \;\; {\rm and}\;\;  \d(r')<\d (b).
$$
Now, $s^{-1}a=s^{-1}q' t \cdot t^{-1}b+s^{-1}r'= q t^{-1}b+r$ where
$q:=s^{-1}q' t$ and $r:=s^{-1}r'$, and so
$$
\d (r)=\d (s^{-1}r')=d_{\bR} (r')<d_{\bR}(b)=\d (t^{-1}b),
$$
as required. 

3. Statement 3 is a dual statement to statement 2, i.e. a ring satisfies statement 2 iff its opposite ring satisfies statement 3. So, statement 3 follows from statement 2.  
\end{proof}

%%%%%%%%%%%%%   Section 3    %%%%%%%%

\section{Criteria for graded and filtered modules to be Euclidean  modules and examples}\label{GRAD-FILT} %\marginpar{GRAD-FILT}

%***    delete    ***

% In this section, the following resulrs are proven.  For an arbitrary  skew polynomial ring $A=D[x;\s, \d]$, 
%Theorem \ref{Sig-Del-28May26} describes a class of Euclidean modules. Similarly, for an arbitrary  skew Laurent polynomial ring $L=D[x^{\pm 1};\s]$, 
%Theorem \ref{SkewLaur-28May26} describes a class of Euclidean modules. 
% For an $\N$-graded ring, Theorem \ref{11Jun26} is a criterion for a $\N$-graded module to be a Euclidean module. Similarly, for a skew Laurent polynopmial ring,  Theorem \ref{12Jun26}  is a criterion for a $\Z$-graded module to be a Euclidean module. Proposition \ref{FiltEucl-28May26} shows that if the associated graded module is a Euclidean module over the associated graded ring then the original module is a Euclidean module. \\

% ***

{\bf Skew  polynomial rings $A=D[x;\s , \d]$ and their Euclidean modules.}
Let $A=D[x;\s, \d]$ and $V$ be a $D$-module. Then the induced $A$-module 
%\marginpar{AtDV}
\begin{equation}\label{AtDV}
A\t_DV=\bigoplus_{i\in \N}x^i\t_DV\simeq \bigoplus_{i\in \N}{}^{\s^{-i}}V
\end{equation}
 is a direct sum of twisted  $D$-modules          
$x^i\t_DV\simeq {}^{\s^{-i}}V$ since for all elements $d\in D$ and $v\in V$, $dx^i\t_Dv=
x^i\t_D \s^{-i}v$. Each element $v\in A\t_DV$, is a unique sum
$$v=\sum_{i=0}^nx^i\t_D v_i\;\; {\rm where}\;\;  v_i\in V.
$$
 Suppose that $v\neq 0$, i.e. there is a nonzero coefficient $v_i\neq 0$. Then the natural number $\deg (v):=\max\{i\in \N\, | \, v_i\neq 0\}$ is called the {\em degree} of the element $v$, the element $x^n\t_D v_n$, where $n=\deg (v)$, is called the {\em leading term} of $v$ and the element $v_n$  is called the  {\em  leading coefficient} of $v$. By  definition, $\deg (0):=-\infty$.
  Then for all elements  $v, u\in A\t_DV$, $a\in A$ and $d\in D$:
 \begin{eqnarray*}
 \deg (v+u)&\leq &\max \{ \deg (v), \deg (u)\},\\
 \deg (av)&\leq &\deg (a)+\deg (v),\\
\deg (dv)&\leq &\deg (v).
 \end{eqnarray*}
 The degree function $\deg$ determines the ascending {\em degree filtration} $\{ (A\t_DV)_{\leq n} \}_{i\in \N}$ on the $A$-module $A\t_DV$ where $(A\t_DV)_{\leq n}:=\{ v\in A\t_DV\, | \, \deg (v)\leq n\}$:
$$
A\t_DV=\bigcup_{i\in \N} (A\t_DV)_{\leq n}\;\; {\rm where}\;\;  A_{\leq n}(A\t_DV)_{\leq m}\subseteq (A\t_DV)_{\leq n+m}\;\; \text{for all }\;\; n,m\in \N.
$$ 
Then the {\em associated graded module} of $A\t_DV$, 
$${\rm gr}(A\t_DV):=\bigoplus_{n\in \N}(A\t_DV)_{\leq n}/(A\t_DV)_{\leq n-1}\simeq \bigoplus_{i\in \N}x^i\t_DV\simeq \bigoplus_{i\in \N}{}^{\s^{-i}}V
$$
is an $\N$-graded ${\rm gr} (A)$-module, i.e. an $\N$-graded  module over the  skew polynomial ring ${\rm gr} (A)\simeq R:=D[x;\s]=\bigoplus_{i\in \N}Dx^i$. This means that for all elements $i,j\in \N$, 
$$ Dx^i\cdot x^j\t_DV\subseteq x^{i+j}\t_DV.
$$
 Clearly, ${\rm gr}(A\t_DV)\simeq R\t_DV$, an isomorphism of graded $R$-modules. 
Let us consider the {\em leading term map}
       %\marginpar{lV-map}
\begin{equation}\label{lV-map}
 l_V:A\t_DV \ra {\rm gr}(A\t_DV), \;\; v=\sum_{i=0}^nx^i\t_D v_i\mapsto x^n\t_D v_n \;\; {\rm where}\;\; n=\deg(v).
\end{equation}
It is not an additive map.

%By Lemma \ref{a7Jun26}.(1),  every  submodule $N$ of the Euclidean $A$-module $A\t_DV$ is also a Euclidean module $(N, \deg |_N )$.
 The $A$-module $N$ admits the degree filtration that is determined by the degree function $\deg |_N$. By the definition,  the degree function $\deg |_N$ is the restriction of the degree function $\deg$ of the $A$-module $A\t_DV$. Therefore, 
 the associated graded ${\rm gr}(A)$-module ${\rm gr}(N)$ is a submodule of the ${\rm gr}(A)$-module ${\rm gr}(A\t_DV)$.

\begin{theorem}\label{Sig-Del-28May26}%\marginpar{Sig-Del-28May26}
Let $D$ be a ring, $A=D[x;\s, \d]$ be a skew polynomial, $\s\in \Aut (D)$, $\d$ be a $\s$-derivation of $D$ and  $V$ be a simple $D$-module. Then:

\begin{enumerate}

\item  The $A$-module $A\t_DV$ is a Euclidean module with Euclidean function $\deg$.
%  where for a nonzero element $v=\sum_{i\geq 0}x^i\t_Dv_i\in A\t_DV$, $d_V (v):=\max \{ i\in \N\, | \, v_i\neq 0\}$.

\item The  $A$-module $A\t_DV$ is a principal submodule  module.

\item If $N$ is a nonzero submodule of $A\t_DV$ then the set of 1-generators of the $A$-module $N$ contains  the non-empty  set $\CG (N):= \{ n\in N\, | \, d_V(n)=m \}$, where $m:=\min \{ d_V(n)\, | \, 0\neq n\in N \}$. For all elements $n=x^{i_1}\t_Dn_{i_1}+\cdots +x^{i_s}\t_Dn_{i_s}\in \CG (N)$,  where  $i_1<\cdots <i_s=m$ and $n_{i_1},\ldots , n_{i_s}\in V\backslash \{ 0\}$, 
$\lann_D(x^{i_\nu}\t_Dn_{i_\nu})\subseteq \lann_D(x^m\t_Dn_m)$ for all $\nu=1,\ldots , s$.

\item For every $A$-submodule $N$ of $A\t_DV$,  $l_V(\CG (N))=\CG ({\rm gr}(N))$ where ${\rm gr} (N)$ is a ${\rm gr} (A)$-submodule of ${\rm gr}(A\t_DV)$.

\end{enumerate}
\end{theorem}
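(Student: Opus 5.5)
The plan is to prove statement 1 directly from the simplicity of $V$, deduce statements 2 and 3 from Proposition \ref{A28May26} plus a minimality argument, and read off statement 4 from the leading-term computation used in statement 1.

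\textbf{Statement 1 (division property for $\deg$).} The image of $\deg$ lies in $(\N,\geq)$, which is Artinian with finite intervals, so only the division property needs checking. Let $u,v\in A\t_DV$ be nonzero with $\deg(u)=k\geq n=\deg(v)$. Let $u_k$ and $v_n$ be their leading coefficients.

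For $a\in D$,
$$
ax^{k-n}v = x^k\t_D \s^{-k}(a)v_n+(\text{terms of degree}<k).
$$
Since $V$ is simple, $Dv_n=V$, so there is $a'\in D$ with $a'v_n=u_k$. Put $a:=\s^k(a')$, so $\s^{-k}(a)v_n=u_k$. Then $\deg(u-ax^{k-n}v)<k$, and induction on $k$ finishes the division. If $k<n$, take $q=0$ and $r=u$.

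\textbf{Statements 2 and 3.} Statement 2, and the first sentence of statement 3, follow at once from statement 1 and Proposition \ref{A28May26}.

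For the annihilator inclusion, fix $n\in\CG(N)$ with terms $x^{i_\nu}\t_Dn_{i_\nu}$ and top degree $m$. Let $d\in\lann_D(x^{i_\nu}\t_Dn_{i_\nu})$. The aim is to show $d\,(x^m\t_Dn_m)=0$. I will use the minimality of $m=g(N)$ twice.

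\begin{enumerate}
\item First, for every $d'\in D$, the element $d'n\in N$ has degree at most $m$. If its degree-$m$ coefficient $\s^{-m}(d')n_m$ vanishes, then $\deg(d'n)<m$, so $d'n=0$. Hence $\lann_D(n)$ coincides with the set of $d'$ killing the top coefficient, and with $\lann_D(x^m\t_Dn_m)$ once the lower-degree $\d$-terms are checked.
\item Second, I will use the division property of statement 1 with $v=n$. Using simplicity of $V$, I construct $c\in D$ such that $cx^{m-i_\nu}\cdot(x^{i_\nu}\t_Dn_{i_\nu})$ reproduces the leading term $x^m\t_Dn_m$. I then compare $d$ applied to this with $d$ applied to $n$. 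The difference lies in $N$, has degree $<m$, and is therefore $0$.
\end{enumerate}

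The main obstacle is this second step. One has to transport the vanishing $d(x^{i_\nu}\t_Dn_{i_\nu})=0$ up to degree $m$ through $x^{m-i_\nu}$, while controlling the lower-order terms produced by $\d$ and the twist $\s^{-i}$. I would first verify the step for $\d=0$, where $d(x^i\t_Dv)=x^i\t_D\s^{-i}(d)v$ exactly, and then treat $\d$ by induction on degree using the filtration.

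\textbf{Statement 4.} The computation in statement 1 shows $l_V(\CG(N))\subseteq{\rm gr}(N)_m\backslash\{0\}$, and $m=g({\rm gr}(N))$. Conversely, every nonzero homogeneous element of ${\rm gr}(N)$ of degree $m$ is the leading term of some element of $N$ of degree $m$, that is, of an element of $\CG(N)$. This gives $l_V(\CG(N))=\CG({\rm gr}(N))$.
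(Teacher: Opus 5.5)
Your arguments for statements 1, 2 and 4, and for the first sentence of statement 3, are the paper's. You cancel the leading coefficient using $V=Dv_n$ and the bijectivity of $\sigma$, apply Proposition~\ref{A28May26}, and compare leading terms.

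\textbf{Your step 2 does not work.} The gap is the annihilator inclusion in statement 3. The element $cx^{m-i_\nu}(x^{i_\nu}\otimes n_{i_\nu})$ is not in $N$. Moreover, $d(x^{i_\nu}\otimes n_{i_\nu})=0$ gives no control over $d\,cx^{m-i_\nu}(x^{i_\nu}\otimes n_{i_\nu})$, because $d$ does not commute with $cx^{m-i_\nu}$.

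\textbf{The inclusion is false for $\delta\neq 0$.} So the planned induction on the $\delta$-terms cannot succeed. Take $D=M_2(K)$ with $K$ a field, $\sigma=\mathrm{id}$, $\delta=[E_{21},\cdot\,]$ and $V=K^2$.
\begin{itemize}
\item The element $y:=x-E_{21}$ is central in $A$, $A\otimes_DV=\bigoplus_{i\geq 0}y^i\otimes V$, and $y^i\otimes w=x^i\otimes w+(\text{terms of lower degree})$.
\item Hence $N:=A(y\otimes e_1)=\bigoplus_{i\geq 1}y^i\otimes V$ has $g(N)=1$, and $n:=y\otimes e_1=1\otimes(-e_2)+x\otimes e_1\in\CG(N)$, so $i_1=0<i_2=m=1$.
\item We have $E_{11}\in\lann_D(1\otimes(-e_2))$.
\item On the other hand, $d(x\otimes e_1)=x\otimes de_1-1\otimes(E_{21}de_1-de_2)$ vanishes only for $d=0$, so $\lann_D(x\otimes e_1)=0$.
\end{itemize}
Reading the annihilators as those of the leading coefficients does not help, since $\ann_D(e_2)\not\subseteq\ann_D(e_1)$. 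The same example refutes your remark in step 1 that $\lann_D(n)$ agrees with $\lann_D(x^m\otimes n_m)$ once the $\delta$-terms are checked: here $\lann_D(n)=\ann_D(e_1)\neq 0$.

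The paper's own argument does not close this gap either. Minimality of $m$ shows $\mathfrak{a}n=0$ for $\mathfrak{a}=\lann_D(x^m\otimes n_m)$, which is the reverse containment. The needed $\mathfrak{a}n\neq 0$ does not follow from $\lann_D(x^{i_{t-1}}\otimes n_{i_{t-1}})\not\subseteq\mathfrak{a}$.

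\textbf{The case $\delta=0$.} This is the setting of Corollary~\ref{SkewPol-28May26}, and there the inclusion is true. Your step 1 already supplies half of the proof; what is missing is a maximality argument, not a transport argument.
\begin{itemize}
\item Step 1 gives $\lann_D(n)=\sigma^m(\ann_D(n_m))$, and for $\delta=0$ this equals $\lann_D(x^m\otimes n_m)$.
\item When $\delta=0$, $dn=\sum_\nu x^{i_\nu}\otimes\sigma^{-i_\nu}(d)n_{i_\nu}$. So $\lann_D(n)=\bigcap_\nu\mathfrak{a}_{i_\nu}$, where $\mathfrak{a}_i:=\lann_D(x^i\otimes n_i)=\sigma^i(\ann_D(n_i))$.
\item Thus $\mathfrak{a}_m\subseteq\mathfrak{a}_{i_\nu}$ for all $\nu$, which is the opposite of what is wanted.
\item However, each $\mathfrak{a}_i$ is the annihilator of a nonzero element of the simple $D$-module $x^i\otimes V\simeq{}^{\sigma^{-i}}V$, hence a maximal left ideal.
\item Therefore $\mathfrak{a}_m\subseteq\mathfrak{a}_{i_\nu}\neq D$ forces $\mathfrak{a}_{i_\nu}=\mathfrak{a}_m$. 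This gives the stated inclusion, in fact with equality.
\end{itemize}
Drop step 2 and use this argument, restricted to $\delta=0$.
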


\begin{proof} 1. Given elements $u=x^m\t_Du_m+\cdots$ and $v=x^n\t_Dv_n+\cdots$ of the $A$-module $A\t_DV$ of degrees $m$ and $n$, respectively,  where the three dots denote elements of smaller degrees, i.e. the elements $x^m\t_Du_m$ and $x^n\t_Dv_n$ are the leading terms of the elements $u$ and $v$, respectively, where $u_m,v_n\in V\backslash \{ 0\}$, we have to show that $u=qv+r$ for some elements $q\in A$ and $r\in A\t_DV$ with $\deg (r)<\deg (v)$.  If $m<n$ then $u=0v+u$, as required (since $\deg (u)=m<n=\deg (v)$).

Suppose that $m\geq n$. The $D$-module $V$ is a simple module. Therefore, $V=Du_m=Dv_n$, and so 
$$
u_m=\s^{-m}(d)v_n\;\; \text{for some element}\;\; d\in D.
$$
It follows from the equalities  $dx^{m-n}x^n\t_Dv_n=\Big( x^m\s^{-m}(d)+\cdots\Big)\t_Dv_n=x^m\t_D \s^{-m}(d)v_n+\cdots=x^m\t_Du_m+\cdots$ that 
$$\deg (u-dx^{m-n}v)<m=\deg (u),$$
and we use induction on the degree $m$ to finish the proof (alternatively, repeat the same argument several times). 

2. By Proposition \ref{A28May26}.(1), statement 2 follows from statement 1.

3. By Proposition \ref{A28May26}.(2), the set of 1-generators of the $A$-module $N$ contains  the non-empty  set $\CG (N)$.
It remains to  show that for all elements $n=x^{i_1}\t_Dn_{i_1}+\cdots +x^{i_s}\t_Dn_{i_s}\in \CG (N)$,  where  $i_1<\cdots <i_s=m$ and $n_{i_1},\ldots , n_{i_s}\in V\backslash \{ 0\}$, 
$$
\lann_D(x^{i_\nu}\t_Dn_{i_\nu})\subseteq \lann_D(x^m\t_Dn_m)\;\; \text{ for all} \;\;\nu=1,\ldots , s.
$$
 Suppose that this is not the case. Then 
 there is an element $n$ as above such that 
 $s\geq 2$ and there is an index, say  $t$,  such that 
$\lann_D(x^{i_\nu}\t_Dn_{i_\nu})\subseteq \lann_D(x^m\t_Dn_m)$ for all $\nu=t,\ldots , s$ and $\lann_D(x^{i_{t-1}}\t_Dn_{i_{t-1}})\not\subseteq \ga:=\lann_D(x^m\t_Dn_m)$. Then $\{0\}\neq \ga n\subseteq N$ and the degrees of  all nonzero elements of the set $\ga n$ are strictly smaller than $m$, a contradiction. 

4. Recall that ${\rm gr}(A)=R:=D[x\; \s]$,   the ${\rm gr}(A)$-modules ${\rm gr}(N)$ and ${\rm gr}(A\t_DV)$ are graded $R$-modules and ${\rm gr}(A\t_DV)\simeq R\t_DV$, as graded $R$-modules. For all elements $v\in A\t_DV$, $\deg(l_V(v)) = \deg (v)$. Therefore, $l_V(\CG (N))=\CG ({\rm gr}(N))$.
\end{proof}

In the case of the skew polynomial ring $R=D[x;\s]$, we can strengthen statement 3 of Theorem \ref{Sig-Del-28May26}.

\begin{corollary}\label{SkewPol-28May26}%\marginpar{SkewPol-28May26}
Let $D$ be a ring and  $R=D[x;\s]$ be a skew polynomial and $V$ be a simple $D$-module. Then:

\begin{enumerate}

\item  The $R$-module $R\t_DV$ is a Euclidean module with Euclidean function $\deg$. 
% (the $V$-degree)  where for a nonzero element $v=\sum_{i\geq 0}x^i\t_Dv_i\in R\t_DV$, $d_V (v):=\max \{ i\in \N\, | \, v_i\neq 0\}$.

\item The  $R$-module $R\t_DV$ is a principal submodule  module.

\item If $N$ is a nonzero submodule of $R\t_DV$ then the set of 1-generators of the $R$-module $N$ contains  the non-empty  set $\CG (N):= \{ n\in N\, | \, d_V(n)=l \}$, where $l:=\min \{ d_V(n')\, | \, 0\neq n'\in N \}$, and 
$\CG (N)\subseteq \CU\CA (R\t_DV)$.

\end{enumerate}
\end{corollary}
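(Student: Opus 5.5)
Statements 1 and 2 are the special case $\d =0$ of Theorem \ref{Sig-Del-28May26}.(1,2): $R=D[x;\s]=D[x;\s,0]$, so $R\t_DV$ with the degree function $\deg$ is a Euclidean $R$-module, and Proposition \ref{A28May26}.(1) makes it a principal submodule module. The first half of statement 3 (that $\CG (N)\neq \emptyset$ consists of 1-generators of $N$) is Proposition \ref{A28May26}.(2), exactly as in Theorem \ref{Sig-Del-28May26}.(3). So the only new content is the inclusion $\CG (N)\subseteq \CU\CA (R\t_DV)$.

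I read $\CU\CA (R\t_DV)$ as the set of nonzero elements $v=\sum_i x^i\t_Dv_i$ whose nonzero homogeneous components $x^i\t_Dv_i$ all have the \emph{same} left annihilator in $D$ (``uniform annihilator''). The plan is to prove, for $n=x^{i_1}\t_Dn_{i_1}+\cdots +x^{i_s}\t_Dn_{i_s}\in \CG (N)$ with $i_s=m$, the equality $\lann_D(x^{i_\nu}\t_Dn_{i_\nu})=\lann_D(x^m\t_Dn_m)$ for all $\nu$.

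The key structural fact is that $D$ preserves the grading: $d\cdot x^i\t_Dv=x^i\t_D\s^{-i}(d)v$, so $x^i\t_DV\simeq {}^{\s^{-i}}V$ is a $D$-submodule. Since $V$ is simple, so is each twisted module ${}^{\s^{-i}}V$. Hence the annihilator of any nonzero element of it is a \emph{maximal} left ideal of $D$, because $D/\lann_D(w)\simeq Dw={}^{\s^{-i}}V$ is simple.

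Next I will show $\ga:=\lann_D(x^m\t_Dn_m)\subseteq \lann_D(x^{i_\nu}\t_Dn_{i_\nu})$. For $a\in\ga$, the element $an$ lies in $N$. It has zero degree-$m$ component, because $D$ acts componentwise, so $\deg(an)<m=g(N)$. By minimality of $g(N)$ this forces $an=0$. Reading off components then gives $a\cdot x^{i_\nu}\t_Dn_{i_\nu}=0$ for every $\nu$. This is the same mechanism as in the proof of Theorem \ref{Sig-Del-28May26}.(3).

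Combining this inclusion with maximality, we have two maximal left ideals with one contained in the other, so they are equal. This proves $n\in\CU\CA (R\t_DV)$.

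The main obstacle is making sure the definition of $\CU\CA$ matches this reading. If it instead requires a statement about $x$-shifts, I would add the observation that, for $\d =0$, left multiplication by $x$ maps $x^i\t_DV\to x^{i+1}\t_DV$ bijectively and twists annihilators by $\s$. The argument would then go through in the same componentwise way.
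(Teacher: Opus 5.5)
Your proof is correct. Its skeleton is the paper's: statements 1, 2 and the first half of 3 are reduced to Theorem \ref{Sig-Del-28May26} and Proposition \ref{A28May26}, and then $n$ is multiplied by an element of $\ga_m:=\lann_D(x^m\t_Dn_m)$, with the degree dropping below $g(N)$.

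The difference lies in where the second inclusion comes from. The paper takes $\lann_D(x^{i_\nu}\t_Dn_{i_\nu})\subseteq \ga_m$ from Theorem \ref{Sig-Del-28May26}.(3). It then argues by contradiction with some $a\in \ga_m\backslash \lann_D(x^{i_{t-1}}\t_Dn_{i_{t-1}})$, and this in effect proves the reverse inclusion $\ga_m\subseteq \lann_D(x^{i_\nu}\t_Dn_{i_\nu})$. That reverse inclusion is exactly what your degree argument gives. You instead get equality from the fact that annihilators of nonzero elements of the simple $D$-modules $x^i\t_DV\simeq {}^{\s^{-i}}V$ are maximal left ideals.

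Your route is worth having. The paper's proof of the inclusion in Theorem \ref{Sig-Del-28May26}.(3) infers $\ga n\neq 0$ from $\lann_D(x^{i_{t-1}}\t_Dn_{i_{t-1}})\not\subseteq \ga$. That inference does not follow without extra input: the degree argument by itself only ever yields $\ga\subseteq \lann_D(x^{i_\nu}\t_Dn_{i_\nu})$, the opposite direction. So your maximality step is precisely what closes the proof, and it makes explicit where the simplicity of $V$ is used; the paper's argument never invokes it.

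Finally, your reading of $\CU\CA$ agrees with the paper's definition, which requires $\lann_D(n)=\lann_D(x^{i_\nu}\t_Dn_{i_\nu})$ for all $\nu$. This is because $D$ acts componentwise when $\d=0$, so $\lann_D(n)=\bigcap_\nu \lann_D(x^{i_\nu}\t_Dn_{i_\nu})$. Your fallback about $x$-shifts is therefore not needed.
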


\begin{proof} 1 and 2. Statements 1 and 2 are particular cases of Theorem \ref{Sig-Del-28May26}.(1,2).

3. By Theorem \ref{Sig-Del-28May26}.(3), the set of 1-generators of the $R$-module $N$ contains  the non-empty  set $\CG (N)$. Let us show that $\CG (N)\subseteq \CU\CA (R\t_DV)$. Suppose that $n=x^{i_1}\t_Dn_{i_1}+\cdots +x^{i_s}\t_Dn_{i_s}\in \CG (N)$,  where  $i_1<\cdots <i_s=m$ and $n_{i_1},\ldots , n_{i_s}\in V\backslash \{ 0\}$. By  
Theorem \ref{Sig-Del-28May26}.(3), 
$$
\ga_{i_\nu}:=\lann_D(x^{i_\nu}\t_Dn_{i_\nu})\subseteq \ga_m:=\lann_D(x^m\t_Dn_m)\;\; \text{for all}\;\;\nu=1,\ldots , s.
$$
Suppose that the element $n$ is not a uni-annihilator element. This means that  
 $s\geq 2$ and there is an index, say  $t$,  such that  $\ga_{i_{t-1}}\subset \ga_{i_t}=\ga_{i_{t+1}}=\cdots =\ga_m$. Take an element $a\in \ga_{i_t}\backslash \ga_{i_{t-1}}$. Then $\deg (an)=i_{t-1}<m$, a contradiction.
 \end{proof}

{\bf Skew Laurent polynomial rings $L=D[x^{\pm 1};\s]$ and their Euclidean modules.} Let $L=D[x^{\pm 1};\s]=\bigoplus_{i\in \Z}Dx^i=\bigoplus_{i\in \Z}x^iD$ be a skew Laurent polynomial ring. The ring $L=S_x^{-1}R\simeq RS_x^{-1}$ is the localization of the skew polynomial ring $R=D[x; \s]$ at the denominator set $S_x:=\{ x^i\, \ \, i\in \N\}$ of $R$. Clearly, $R\subseteq L$ and the ring $L$ is a $\Z$-graded ring.  Each element $a\in L$, is a unique sum
$$a=\sum_{i=m}^na_ix^i=\sum_{i=m}^nx^i\s^{-i}(a_i)\;\; {\rm where}\;\;  a_i\in D.
$$
 Suppose that $a\neq 0$, i.e. there is a nonzero coefficient $a_i\neq 0$. Then the natural number $l(a):=n-m$ is called the {\em length} of the element $a$ where
  $m:=\min \{i\in \N\, | \, a_i\neq 0\}$ and $n:=\max\{i\in \N\, | \, a_i\neq 0\}$. 
  The elements $a_mx^m$ and $a_nx^n$ are  called the {\em lowest} and  {\em highest/leading terms} of $a$, respectively.   The elements $a_m$  and $a_n$ are called the  {\em lowest} and {\em highest/leading coefficients} of $a$, respectively. By  definition, $l (0):=-\infty$.
  Then for all elements  $a, b\in L$ and $d\in D$:
 $$ 
 l(ab)\leq l (a)+l(b),\;\; l(da)\leq l(a) \;\; {\rm and}\;\; l(x^ia)=l(ax^i)=l(a)\;\; {\rm for \; all}\;\; i\in \Z.
 $$
Let  $V$ be a $D$-module. Then the induced $L$-module 
%\marginpar{LtDV}
\begin{equation}\label{LtDV}
L\t_DV=\bigoplus_{i\in \Z}x^i\t_DV\simeq \bigoplus_{i\in \Z}{}^{\s^{-i}}V
\end{equation}
 is a direct sum of twisted  $D$-modules          
$x^i\t_DV\simeq {}^{\s^{-i}}V$ since for all elements $d\in D$ and $v\in V$, $dx^i\t_Dv=
x^i\t_D \s^{-i}v$. Each element $v\in L\t_DV$, is a unique sum
$$v=\sum_{i=m}^nx^i\t_D v_i\;\; {\rm where}\;\;  v_i\in V.
$$
 Suppose that $v\neq 0$, i.e. there is a nonzero coefficient $v_i\neq 0$. Then the natural number $l(v):=n-m$ is called the {\em length} of the element $v$ where
  $m:=\min \{i\in \N\, | \, v_i\neq 0\}$ and $n:=\max\{i\in \N\, | \, v_i\neq 0\}$. 
  The elements $x^m\t_D v_m$ and $x^n\t_D v_n$ are  called the {\em lowest} and  {\em highest/leading terms} of $v$, respectively.   The elements $v_m$  and $v_n$ are called the  {\em lowest} and {\em highest/leading coefficients} of $v$, respectively. By  definition, $l (0):=-\infty$.
  Then for all elements  $v\in L\t_DV$, $a\in L$ and $d\in D$:
 $$
 l(av)\leq l (a)+l(v) \;\; {\rm and}\;\; 
l(dv)\leq l(v).
 $$

\begin{proposition}\label{SkewLaur-28May26}%\marginpar{SkewLaur-28May26}
Let $D$ be a ring,   $L=D[x^{\pm 1};\s]$ be a skew Laurent polynomial and $V$ be  a simple $D$-module. Then:

\begin{enumerate}
\item  The $L$-module $L\t_DV$ is a Euclidean module with Euclidean function $l$ (the length function).

\item The  $L$-module $L\t_DV$ is a principal submodule  module.

\item If $N$ is a nonzero submodule of $L\t_DV$ then the set of 1-generators of the $L$-module $N$ contains  the non-empty  set $\CG (N) := \{ n\in N\, | \, l(n)=l \}$, where $l:=\min \{ l(n')\, | \, 0\neq n'\in N \}$, and 
$\CG (N)\subseteq \CU\CA (L\t_DV)$.

\end{enumerate}
\end{proposition}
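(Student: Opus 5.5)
The plan is to follow the proof of Theorem \ref{Sig-Del-28May26} and Corollary \ref{SkewPol-28May26}. The one new ingredient is that we now control both ends of the support of an element: we shift by a power of $x$ to line up lowest terms, instead of comparing leading terms.

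\emph{Statement 1.} Let $u,v\in L\t_DV$ be nonzero, with supports in $[m_u,n_u]$ and $[m_v,n_v]$ (lowest and highest degrees). If $l(u)<l(v)$, take $q=0$ and $r=u$. Otherwise put $k:=m_u-m_v$. The element $x^kv$ has lowest term $x^{m_u}\t_D v_{m_v}$ and highest degree $m_u+l(v)\le n_u$, since $l(x^kv)=l(v)$. The twisted module $x^{m_u}\t_DV\simeq{}^{\s^{-m_u}}V$ is a simple $D$-module and $x^{m_u}\t_Dv_{m_v}\neq 0$, so there is $d\in D$ with $d\,x^{m_u}\t_Dv_{m_v}=x^{m_u}\t_Du_{m_u}$. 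Left multiplication by $d$ preserves each homogeneous component $x^i\t_DV$. Hence $u-dx^kv$ has support in $[m_u+1,n_u]$, so either it is zero or $l(u-dx^kv)<l(u)$. Induction on $l(u)$ (equivalently, repeating the step) produces $q\in L$ and $r$ with $u=qv+r$, where $r=0$ or $l(r)<l(v)$. The image of $l$ lies in $\N$, which is Artinian, so $l$ is a Euclidean function; it is even l-finite.

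\emph{Statements 2 and 3.} Statement 2 and the first half of statement 3 follow at once from Proposition \ref{A28May26}.(1,2). For the inclusion $\CG (N)\subseteq \CU\CA (L\t_DV)$, I read $\CU\CA$ as in Corollary \ref{SkewPol-28May26}: all nonzero homogeneous components $x^{i_\nu}\t_Dn_{i_\nu}$ of $n$ have the same left annihilator in $D$.

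Take $n=\sum_{\nu=1}^s x^{i_\nu}\t_Dn_{i_\nu}\in\CG (N)$ with $i_1<\cdots<i_s$, and set $\ga_\nu:=\lann_D(x^{i_\nu}\t_Dn_{i_\nu})$. Suppose $a\in\ga_1\setminus\ga_\nu$ for some $\nu$. Then $an\in N$ is nonzero, because its $i_\nu$-component is nonzero, and its support lies in $[i_2,i_s]$. So $l(an)<l(n)=g(N)$, which contradicts the minimality of $g(N)$. Hence $\ga_1\subseteq\ga_\nu$ for all $\nu$.

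Each $\ga_\nu$ is the annihilator of a nonzero element of a simple $D$-module, so each is a maximal left ideal of $D$. An inclusion $\ga_1\subseteq\ga_\nu$ between maximal left ideals forces $\ga_1=\ga_\nu$. Therefore all the $\ga_\nu$ coincide, which is the uni-annihilator property.

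\emph{Main obstacle.} The delicate point is this last step. Killing an interior component does not shorten $n$, so the length argument alone only yields inclusions involving the extreme components $\ga_1$ and $\ga_s$. The plan is to close this gap using the maximality of annihilators in the simple twisted modules ${}^{\s^{-i}}V$, rather than using the ordering argument of Corollary \ref{SkewPol-28May26}.
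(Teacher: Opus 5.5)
Your proof is correct. Statements 1 and 2 are the paper's argument in mirror image. The paper cancels highest terms by subtracting $dx^{m-n}v$, with $m,n$ the top degrees of $u,v$; you cancel lowest terms by subtracting $dx^{m_u-m_v}v$. In both versions $l(u)\geq l(v)$ keeps the other end of the support inside that of $u$.

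The genuine difference is in the inclusion $\CG (N)\subseteq \CU\CA (L\t_DV)$. The paper asserts $\ga_{i_\nu}\subseteq \ga_m$ for all $\nu$, where $\ga_m$ is the annihilator of the top component. It then argues by descent: if $n$ is not uni-annihilator, it chooses $t$ with $\ga_{i_{t-1}}\subset \ga_{i_t}=\cdots =\ga_m$ and $a\in \ga_{i_t}\setminus \ga_{i_{t-1}}$, so that $an\neq 0$ is supported in $[i_1,i_{t-1}]$ and is shorter than $n$. You instead kill the lowest component to get $\ga_1\subseteq\ga_\nu$ (in your indexing). You then finish with the maximality of each $\ga_\nu$, which is the annihilator of a nonzero element of the simple module $x^{i_\nu}\t_DV\simeq {}^{\s^{-i_\nu}}V$.

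Your route is cleaner, and, as your closing remark anticipates, it makes explicit an ingredient the paper's version silently needs. The paper justifies $\ga_{i_\nu}\subseteq\ga_m$ by ``$l(an)<l(n)$ for all $a\in\ga_m$''. Combined with the minimality of $l(n)$, this actually yields the reverse inclusion $\ga_m\subseteq\ga_{i_\nu}$. For interior indices $1<\nu<s$ the length argument does not give $\ga_{i_\nu}\subseteq\ga_m$, since an element $a\in\ga_{i_\nu}\setminus\ga_m$ need not shorten $n$. Once maximality is used, either extreme component suffices (from $\ga_m\subseteq\ga_{i_\nu}$ equality follows immediately), and the descent step becomes unnecessary.
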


\begin{proof} 1. Given elements $u,v\in L\t_DV$, we have to show that $u=qv+r$ for some elements $q\in L$ and $r\in L\t_DV$ such that $l(r)<l(v)$. 
 If $l(u)<l(v)$ then $u=0v+u$ where $q=0$ and $r=u$. 
 
 Suppose that $l(u)\geq l(v)$ and  
$u=x^m\t_Du_m+\cdots$ and $v=x^n\t_Dv_n+\cdots$  where the three dots denote elements of smaller degrees, i.e. the elements $x^m\t_Du_m$ and $x^n\t_Dv_n$ are the leading terms of the elements $u$ and $v$, respectively, where $u_m,v_n\in V\backslash \{ 0\}$.  The $D$-module $V$ is a simple module. Therefore, $V=Du_m=Dv_n$, and so 
$$
u_m=\s^{-m}(d)v_n\;\; \text{for some element}\;\; d\in D.
$$
It follows from the equalities  $dx^{m-n}x^n\t_Dv_n=\Big( x^m\s^{-m}(d)+\cdots\Big)\t_Dv_n=x^m\t_D \s^{-m}(d)v_n+\cdots=x^m\t_Du_m+\cdots$ and the inequality  $l(u)\geq l(v)$ that 
$$l (u-dx^{m-n}v)<l (u),$$
and we use induction on the length $l(u)$ to finish the proof (alternatively, repeat the same argument several times).

2. Statement 2 follows from statement 1 and Proposition \ref{A28May26}.(1).

3.  The first part of statement 3 follows from statement 1 and Proposition \ref{A28May26}.(2). It remains to show that 
$\CG (N)\subseteq \CU\CA (L\t_DV)$. Suppose that $n=x^{i_1}\t_Dn_{i_1}+\cdots +x^{i_s}\t_Dn_{i_s}\in \CG (N)$,  where  $i_1<\cdots <i_s=m$ and $n_{i_1},\ldots , n_{i_s}\in V\backslash \{ 0\}$. Clearly, 
$$
\ga_{i_\nu}:=\lann_D(x^{i_\nu}\t_Dn_{i_\nu})\subseteq \ga_m:=\lann_D(x^m\t_Dn_m)\;\; \text{for all}\;\;\nu=1,\ldots , s
$$
since $l(an)<l(n)$ for all elements $a\in \ga_m$. 
Suppose that the element $n$ is not a uni-annihilator element. This means that  
 $s\geq 2$ and there is an index, say  $t$,  such that  $\ga_{i_{t-1}}\subset \ga_{i_t}=\ga_{i_{t+1}}=\cdots =\ga_m$. Take an element $a\in \ga_{i_t}\backslash \ga_{i_{t-1}}$. Then $an\neq 0$ and  $l (an)<l(n)$, a contradiction.
\end{proof}

{\bf Criterion for  $\N$-graded module to be a Euclidean module.}  Theorem \ref{11Jun26} is a criterion for $\N$-graded module to be  a Euclidean module. 

\begin{theorem}\label{11Jun26}%\marginpar{11Jun26}

Let $A=\bigoplus_{i\in \N}A_i$ be an $\N$-graded ring and  $M=\bigoplus_{i\in \N} M_i$ be an $\N$-graded $A$-module. Then the $A$-module $(M,\deg_M)$ is a Euclidean module iff for all natural numbers $i\leq j$ and elements $m_i\in M_i\backslash \{ 0\}$, $M_j=A_{j-i}m_i$. If the $A$-module $(M,\deg_M)$ is a Euclidean module then all $A_0$-modules $M_i$ are  simple modules. 

\end{theorem}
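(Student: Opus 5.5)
Here $\deg_M(m)$ denotes the largest $i$ such that the homogeneous component of $m$ in $M_i$ is nonzero, so $\deg_M(m)=\max\{i\,|\,m_i\neq 0\}$. The leading term is the component $m_{\deg m}$. The poset is $(\N,\geq)$, so the Artinian condition is automatic and only the left division property has to be examined. I will prove the two implications separately and then read off the simplicity of the $M_i$ from the criterion.

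\emph{Sufficiency.} Assume $M_j=A_{j-i}m_i$ for all $i\leq j$ and all $m_i\in M_i\backslash\{0\}$. Take nonzero $u,v$ with $\deg u=m$ and $\deg v=n$. If $m<n$, put $q=0$ and $r=u$. If $m\geq n$, apply the hypothesis with $m_i=v_n$ to get $a\in A_{m-n}$ with $av_n=u_m$. Since $A$ is $\N$-graded, $av$ has top component $av_n$ in degree $m$, and its other components lie in degrees below $m$. Hence $\deg(u-av)<m$. Induction on $m$, exactly as in the proof of Theorem \ref{Sig-Del-28May26}.(1), produces $q$ and $r$ with $r=0$ or $\deg r<n$.

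\emph{Necessity.} This is the step I expect to be the main obstacle, because the division property only controls the degree of the remainder, not its individual components. Fix $i\leq j$, a nonzero $m_i\in M_i$, and any $w\in M_j$; we may assume $w\neq 0$. Division gives $w=qm_i+r$ with $r=0$ or $\deg r<i$. Write $q=\sum_k q_k$ with $q_k\in A_k$ and take the degree-$j$ component of both sides. This yields $w=q_{j-i}m_i+r_j$. If $j\geq i$, then $r_j=0$, since every component of $r$ sits in degree less than $i\leq j$. Therefore $w=q_{j-i}m_i\in A_{j-i}m_i$, and the reverse inclusion $A_{j-i}m_i\subseteq M_j$ is clear, so $M_j=A_{j-i}m_i$. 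The whole difficulty reduces to this comparison of homogeneous components.

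\emph{Simplicity.} Take $i=j$. Then $M_i=A_0m_i$ for every nonzero $m_i\in M_i$, so each $M_i$ that is nonzero is a simple $A_0$-module. If zero modules are not to count as simple, this has to be read as applying to the nonzero $M_i$.
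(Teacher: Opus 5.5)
Your proof is correct and follows the paper's argument essentially step for step. For necessity, you take the degree-$j$ homogeneous component of $w=qm_i+r$, using that every component of $r$ lies in degree $<i\leq j$. For sufficiency, you cancel the leading component of $u$ with a homogeneous $a\in A_{m-n}$ and induct on $\deg u$.

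Your caveat that the simplicity conclusion applies only to the nonzero $M_i$ is a legitimate refinement that the paper's statement glosses over. For example, $M=M_0$ concentrated in degree $0$ over a field $A=A_0$ is Euclidean, yet $M_1=0$.
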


\begin{proof} $(\Rightarrow)$ Suppose that the $A$-module $M$ is a Euclidean module. 
 Given natural numbers $i\leq j$ and an element $m_i\in M_i\backslash \{ 0\}$, we have to show that  $M_j=A_{j-i}m_i$. Let $m_j\in M_j$. 
Since the $A$-module $M$ is a Euclidean module, $m_j=qm_i+r$ for some elements $q=\sum_{k\in \N}q_k\in A$, where $q_k\in A_k$, and $r\in M$ such that $\deg_M(r)<\deg_M(m_i)=i$.
 Comparing the homogeneous components of degree $j$ in the equality  $m_j=qm_i+r$, we have the equality $m_j=q_{j-i}m_i$. Therefore, 
$M_j=A_{j-i}m_i$.

$(\Leftarrow)$ Suppose that for all natural numbers $i\leq j$ and elements $m_i\in M_i\backslash \{ 0\}$, $M_j=A_{j-i}m_i$. Given elements $u,v\in M$, we have to show that $u=qv+r$ for some elements $q\in A$ and $r\in M$ such that $\deg_M(r)<\deg_M(v)$. Suppose that $u=m_j+\cdots$ and $v=m_i+\cdots$ where the three dots denote smaller terms, $m_j\in M_j$ and $m_i\in M_i$. If $j<i$ then $u=0v+u$.

Suppose that $i\leq j$. Then, by the assumption, $m_j=am_i$ for   some element $ 
a\in A_{j-i}$. Therefore, $\deg_M(u-av)<j=\deg_M(u)$, and so $u=av+u_1$ for some element $u_1\in M$ such that $\deg_M(u_1)<j$. By the induction on $j$, 
$u_1=bv+r$ for some elements $b\in A$ and $r\in M$ such that $\deg_M(r)<\deg_M(v)=i$. 
Now, $u=qv+r$ where $q:=a+b$. This finishes the proof of the  first statement of the theorem. Now, the second statement of the theorem (about simplicity) follows from the first  one. 
\end{proof}

\begin{definition}
Let $S=\N, \Z$, $A=\bigoplus_{i\in S}A_i$ be an $S$-graded  ring and  $M=\bigoplus_{i\in S} M_i$ be an $S$-graded  $A$-module. 
An element $m=\sum_{i\in S} m_i\in M$, where $m_i\in M_i$,  is called a {\bf uni-annihilator element} if $\lann_{A_0}(m)=\lann_{A_0}(m_i)$ for all nonzero elements  $m_i$. The set of all uni-annihilator elements of the $A$-module $M$  is denoted by $\CU\CA (M)$.
\end{definition}

\begin{lemma}\label{a13Jun26}%\marginpar{a13Jun26}
Let $A=\bigoplus_{i\in \N}A_i$ be an $\N$-graded  ring and  $M=\bigoplus_{i\in \N} M_i$ be an $\N$-graded Euclidean $A$-module with the Euclidean map $\deg_M$. If $N$ is a nonzero submodule of $M$ then the set of 1-generators of the $A$-module $N$ contains  the non-empty  set $\CG (N):= \{ n\in N\, | \, \deg_M(n)=l \}$, where $l:=\min \{ \deg_M (n')\, | \, 0\neq n'\in N \}$, and 
$\CG (N)\subseteq \CU\CA (M)$. 

\end{lemma}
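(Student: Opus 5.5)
The first claim, that every element of $\CG (N)$ generates $N$ and that $\CG (N)\neq \emptyset$, is Proposition \ref{A28May26}.(2) applied to the Euclidean module $(M,\deg_M)$. The Euclidean function $\deg_M$ takes values in $\N$, so the minimum $l$ exists and $\CG (N)$ is non-empty. The remaining work is to prove the inclusion $\CG (N)\subseteq \CU\CA (M)$.

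Fix $n=\sum_{i\leq l} n_i\in \CG (N)$ with $n_i\in M_i$ and $n_l\neq 0$. For each $i$ with $n_i\neq 0$ I must show $\lann_{A_0}(n)=\lann_{A_0}(n_i)$. Since $A_0M_i\subseteq M_i$, an element $a\in A_0$ annihilates $n$ if and only if it annihilates every homogeneous component $n_i$. Hence
$$
\lann_{A_0}(n)=\bigcap_{n_i\neq 0}\lann_{A_0}(n_i),
$$
and it suffices to show that all the ideals $\lann_{A_0}(n_i)$ with $n_i\neq 0$ are equal.

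The key step is a minimality argument carried out in both directions.

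\emph{First direction.} Take $a\in \lann_{A_0}(n_l)$. Then $an\in N$ and $an=\sum_{i<l}an_i$ has degree $<l$. By minimality of $l$ we get $an=0$, so $an_i=0$ for all $i$. This gives $\lann_{A_0}(n_l)\subseteq \lann_{A_0}(n_i)$ for every $i$.

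\emph{Second direction.} To get the reverse inclusion, I would use Theorem \ref{11Jun26}: each $M_i$ is a simple $A_0$-module, and $M_l=A_{l-i}n_i$ whenever $n_i\neq 0$. Suppose some $a\in A_0$ kills $n_i\neq 0$ but not $n_l$. Let $i$ be maximal with this property among such pairs. Then $an\in N$ is nonzero. Its top component is $an_l$, which has degree $l$, while its $i$-th component $an_i$ vanishes. That alone does not lower the degree, so I would instead compare annihilators through the simple modules.

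Since $M_l=A_{l-i}n_i$, the module $M_l$ is a homomorphic image of $A_{l-i}\otimes$ data attached to $A_0n_i$. This approach is awkward because $A_{l-i}$ is not an $A_0$-module map in the needed direction.

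A cleaner route is to show that $n$ itself generates $N$, and then use Euclideanity of $N$ together with the graded structure. Apply the first direction to $n'=bn$ for suitable $b\in A_0$: every nonzero element of $N$ of degree $l$ lies in $\CG (N)$. So if $a n_l\neq 0$, then $an\in \CG (N)$, and by the first direction applied to $an$ we get $\lann_{A_0}(an_l)\subseteq \lann_{A_0}(an_i)$. Since $M_l$ is simple, $A_0an_l=M_l\ni n_l$, so $n_l=ban_l$ for some $b\in A_0$. Taking the element $ban\in \CG (N)$ and comparing it with $n$, the difference $n-ban$ has degree $<l$ and lies in $N$, hence is zero. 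Therefore $n_i=ban_i=b\cdot 0=0$, a contradiction. This yields $\lann_{A_0}(n_i)\subseteq \lann_{A_0}(n_l)$.

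The main obstacle is this second direction. It is the step where simplicity of $M_l$, obtained from Theorem \ref{11Jun26}, is essential, and I expect the ``difference has lower degree, hence vanishes'' trick to carry it.
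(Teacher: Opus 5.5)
Your proof is correct, and its second half genuinely departs from the paper's argument. The paper uses only minimality of degree. It asserts $\lann_{A_0}(m_{i_\nu})\subseteq\lann_{A_0}(m_{i_s})$ ``since $\deg_M(am)<\deg_M(m)$ for all $a\in\lann_{A_0}(m_{i_s})$'', but that reason actually proves the opposite inclusion, which is your first direction. Its contradiction step rests on the asserted inclusion and again only uses degree-lowering. So the inclusion $\lann_{A_0}(n_i)\subseteq\lann_{A_0}(n_l)$ is never really derived there.

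You correctly noticed that an element $a$ killing $n_i$ but not $n_l$ does not lower the degree, and degree-minimality alone cannot settle this case. For example, take $A=A_0=\Z$ and $M=M_0\oplus M_1$ with $M_0=\Z/2\Z$ and $M_1=\Z$ (this $M$ is of course not Euclidean). The element $(\overline{1},1)$ has minimal degree in $N=\Z(\overline{1},1)$, yet its components have annihilators $2\Z\neq 0$.

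The Euclidean hypothesis therefore has to enter through the simplicity of the $A_0$-modules $M_i$ (Theorem \ref{11Jun26}). That is exactly what your argument uses: you choose $b\in A_0$ with $ban_l=n_l$ and conclude $n-ban=0$ by minimality. An even shorter way to close it: $A_0/\lann_{A_0}(n_l)\simeq A_0n_l=M_l$ is simple, so $\lann_{A_0}(n_l)$ is a maximal left ideal. Then your first inclusion $\lann_{A_0}(n_l)\subseteq\lann_{A_0}(n_i)\neq A_0$ is automatically an equality.

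In a final write-up, delete the abandoned attempts: the choice of a maximal $i$, the remark about ``$A_{l-i}\otimes$ data'', and the unused inclusion $\lann_{A_0}(an_l)\subseteq\lann_{A_0}(an_i)$. Keep only the first direction and the $b$-argument (or the maximal-ideal remark).
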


\begin{proof} By Proposition \ref{A28May26}, the set of 1-generators of the $A$-module $N$ contains  the non-empty  set $\CG (N)$. Let us show that $\CG (N)\subseteq \CU\CA (M)$. Suppose that $m=m_{i_1}+\cdots +m_{i_s}\in \CG (N)$   where $m_{i_k}\in M_k\backslash \{ 0\}$ and  $i_1<\cdots <i_s$. Clearly, 
$$
\ga_{i_\nu}:=\lann_{A_0}(m_{i_\nu})\subseteq \ga_{i_s}:=\lann_{A_0}(m_{i_s})\;\; \text{for all}\;\;\nu=1,\ldots , s
$$
since $\deg_M(am)<\deg_M(m)$ for all elements $a\in \lann_{A_0}(m_{i_s})$.
Suppose that the element $m$ is not a uni-annihilator element. This means that  
 $s\geq 2$ and there is an index, say  $t$,  such that  $\ga_{i_{t-1}}\subset \ga_{i_t}=\ga_{i_{t+1}}=\cdots =\ga_m$. Take an element $a\in \ga_{i_t}\backslash \ga_{i_{t-1}}$. Then $\deg_M (am)=i_{t-1}<i_s$, a contradiction.
 \end{proof}

{\bf Criterion for  $\Z$-graded module to be a Euclidean module.}  Theorem \ref{12Jun26} is a criterion for  $\Z$-graded graded module to be  a Euclidean module. 

\begin{theorem}\label{12Jun26}%\marginpar{12Jun26}

Let $A=\bigoplus_{i\in \Z}A_i$ be an $\Z$-graded ring and  $M=\bigoplus_{i\in \Z} M_i$ be a $\Z$-graded $A$-module where $l_M:M\ra \N$ is the length map on $M$. Then the $A$-module $(M,l_M)$ is a Euclidean module iff for all natural numbers $i\leq j$ and elements $m_i\in M_i\backslash \{ 0\}$, $M_j=A_{j-i}m_i$. If the $A$-module $(M,l_M)$ is a Euclidean module then all $A_0$-modules $M_i$ are  simple modules. 

\end{theorem}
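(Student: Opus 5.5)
The plan is to imitate the proofs of Theorem \ref{11Jun26} and Proposition \ref{SkewLaur-28May26}, with the length function $l_M$ in place of the degree. For a nonzero $v=\sum_{k=a}^{b}v_k\in M$, where $v_k\in M_k$, $v_a\neq 0$ and $v_b\neq 0$, we have $l_M(v)=b-a$. In particular, nonzero homogeneous elements are exactly the elements of length $0$.

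\emph{$(\Rightarrow)$.} Take integers $i$ and $j$, an element $m_i\in M_i\backslash\{0\}$, and any $m_j\in M_j$. The left division property gives $m_j=qm_i+r$ with either $r=0$ or $l_M(r)<l_M(m_i)=0$. The second option is impossible, so $r=0$. Write $q=\sum_k q_k$ with $q_k\in A_k$ and compare homogeneous components of degree $j$; this gives $m_j=q_{j-i}m_i$, hence $M_j=A_{j-i}m_i$. This argument never uses $i\leq j$, so Euclideanity forces the equality for \emph{all} pairs $i,j\in\Z$. Taking $i=j$ gives $M_i=A_0m_i$ for every $0\neq m_i\in M_i$, so each nonzero $A_0$-module $M_i$ is simple. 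This settles the last sentence of the theorem.

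\emph{$(\Leftarrow)$.} Let $u$ and $v$ be nonzero, with $u$ supported on $[m,n]$ and $v$ supported on $[a,b]$, and suppose $l_M(u)=n-m\geq b-a=l_M(v)$; otherwise $u=0\cdot v+u$. I would induct on $l_M(u)$ and split into two cases.
\begin{itemize}
\item[(i)] $n\geq b$. By hypothesis $u_n=cv_b$ for some $c\in A_{n-b}$. The element $cv$ is supported on $[a+n-b,n]$, and $a+n-b\geq m$ because $n-m\geq b-a$. Hence $u-cv$ is supported on $[m,n-1]$, so its length is strictly smaller than $l_M(u)$, and induction finishes this case.
\item[(ii)] $n<b$. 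Then $m\leq n-l_M(v)<b-l_M(v)=a$. Here I eliminate the lowest term instead: write $u_m=c'v_a$ with $c'\in A_{m-a}$. Then $c'v$ is supported on $[m,m+b-a]\subseteq[m,n]$, so $u-c'v$ is supported on $[m+1,n]$, and again induction applies.
\end{itemize}

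The main obstacle is case (ii). It needs $M_m=A_{m-a}v_a$ with $m<a$, that is, the condition for a pair $i>j$. So I intend to read the hypothesis as ranging over all $i,j\in\Z$; the words ``natural numbers $i\leq j$'' appear to be carried over from Theorem \ref{11Jun26}. This reading is harmless, because $(\Rightarrow)$ shows the two-sided condition is necessary anyway. If the hypothesis really is meant only for $i\leq j$, I would first look for a way to avoid case (ii), for instance by using the condition for $i\leq j$ to divide the other way. I expect that to fail in general, and in that situation I would look for a counterexample with some $M_i=0$.
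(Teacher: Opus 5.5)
Your proof is correct for the theorem with the condition $M_j=A_{j-i}m_i$ imposed for all $i,j\in\Z$, and it is essentially the paper's argument. For $(\Rightarrow)$ you compare homogeneous components, and for $(\Leftarrow)$ you induct on $l_M(u)$, subtracting a homogeneous multiple of $v$ at each step. The paper always cancels the top term, writing $m_j=am_i$ with $a\in A_{j-i}$ where $j$ and $i$ are the top degrees of $u$ and $v$. It therefore silently uses the condition also when $j<i$, even though its stated hypothesis is only for $i\leq j$. Your case (ii) just makes this dependence explicit, and cancelling the top term there works equally well.

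Your doubt about the literal hypothesis is justified, and you can close it. Take $A=M=K[x]$ graded by $\deg x=1$: the condition holds for all $i\leq j$. Yet dividing $u=1$ by $v=x$ forces $r=0$ because $l_M(x)=0$, which would mean $1\in K[x]x$, and this is false. So $(M,l_M)$ is not Euclidean, and the two-sided reading you adopted is the correct form of the statement.
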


\begin{proof} $(\Rightarrow)$ Suppose that the $A$-module $M$ is a Euclidean module. 
 Given integers $i\leq j$ and an element $m_i\in M_i\backslash \{ 0\}$, we have to show that  $M_j=A_{j-i}m_i$. Let $m_j\in M_j$. 
Since the $A$-module $M$ is a Euclidean module, $m_j=qm_i+r$ for some elements $q=\sum_{k\in \Z}q_k\in A$, where $q_k\in A_k$, and $r\in M$ such that $l_M(r)<l_M(m_i)=0$, i.e. $r=0$, and so $m_j=qm_i$. 
$M_j=A_{j-i}m_i$.

$(\Leftarrow)$ Suppose that for all integers $i\leq j$ and elements $m_i\in M_i\backslash \{ 0\}$, $M_j=A_{j-i}m_i$. Given elements $u,v\in M$, we have to show that $u=qv+r$ for some elements $q\in A$ and $r\in M$ such that $l_M(r)<l_M(v)$. Suppose that $u=m_j+\cdots$ and $v=m_i+\cdots$ where  $m_j\in M_j$,  $m_i\in M_i$ and the three dots denote smaller terms with respect to the graded degree, i.e. the elements $m_j$ and $m_i$ are the leading terms of the elements $u$ and $v$, respectively. If $l_M(u)<l_M(v)$ then $u=0v+u$.

Suppose that $l_M(u)\geq l_M(v)$. Then, by the assumption, $m_j=am_i$ for   some element $ 
a\in A_{j-i}$. Therefore, $l_M(u-av)<l_M(u)$, and so $u=av+u_1$ for some element $u_1\in M$ such that $l_M(u_1)<l_M(u)$. Now, we can repeat the same argument but for the element 
 $u_1$ rather than $u$ and then keep  repeating it finitely many times we obtain homogeneous elements, say  $a, \ldots,  b\in A$, such that 
 $l_M(u-(a+\cdots +b)v)<l_M(v)$. Now, $u=qv+r$ where $q:=a+\cdots +b$ and $r:=u-(a+\cdots +b)v$.
 
  This finishes the proof of the  first statement of the theorem. Now, the second statement of the theorem (about simplicity) follows from the first  one. 
 \end{proof}

\begin{lemma}\label{b13Jun26}%\marginpar{b13Jun26}
Let $A=\bigoplus_{i\in \Z}A_i$ be a $\Z$-graded  ring and  $M=\bigoplus_{i\in \Z} M_i$ be a $\Z$-graded Euclidean $A$-module with the Euclidean map $l_M$. If $N$ is a nonzero submodule of $M$ then the set of 1-generators of the $A$-module $N$ contains  the non-empty  set $\CG (N):= \{ n\in N\, | \, l_M(n)=l \}$, where $l:=\min \{ l_M (n')\, | \, 0\neq n'\in N \}$, and 
$\CG (N)\subseteq \CU\CA (M)$. 

\end{lemma}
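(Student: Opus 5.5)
The first part is a direct application of Proposition \ref{A28May26}.(2) to the Euclidean module $(M,l_M)$. Since $N\neq 0$ and the image of $l_M$ lies in $\N$, the minimum $l=g(N)$ is attained, so $\CG(N)\neq\emptyset$, and every element of $\CG(N)$ generates $N$. What remains is the inclusion $\CG(N)\subseteq \CU\CA(M)$. For this I will follow the argument of Lemma \ref{a13Jun26}, with the length function $l_M$ in place of the degree $\deg_M$.

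Fix $m=m_{i_1}+\cdots+m_{i_s}\in\CG(N)$ with $i_1<\cdots<i_s$ and all $m_{i_k}\in M_{i_k}\backslash\{0\}$, so that $l_M(m)=i_s-i_1=l$. Put $\ga_{i_\nu}:=\lann_{A_0}(m_{i_\nu})$. Since $A_0$ preserves each graded component, for $a\in A_0$ we have $am=\sum_\nu am_{i_\nu}$ with $am_{i_\nu}\in M_{i_\nu}$. Thus $\lann_{A_0}(m)=\bigcap_\nu \ga_{i_\nu}$, and the uni-annihilator condition says exactly that all $\ga_{i_\nu}$ coincide.

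\textbf{Key step.} Suppose $a\in A_0$ kills an extreme component, $m_{i_s}$ or $m_{i_1}$, but does not kill $m$. Then $am\in N$ is nonzero and its support is contained in $\{i_1,\ldots,i_s\}$ with one endpoint removed, so $l_M(am)<l_M(m)=l$. This contradicts the minimality of $l$.

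Applying this to the top component gives $\ga_{i_s}\subseteq\ga_{i_\nu}$ for all $\nu$. Applying it to the bottom component gives $\ga_{i_1}\subseteq \ga_{i_\nu}$ for all $\nu$. (Lemma \ref{a13Jun26} needs only the top endpoint, but for length both endpoints are available.)

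\textbf{Interior components.} This is where the main obstacle lies: we still need every $\ga_{i_\nu}$ to equal $\ga_{i_s}$. Suppose not, and pick $a\in\ga_{i_t}$ with $am\neq0$. Then $am$ has a gap in its support but may still span the same range, so its length need not drop. Lemma \ref{a13Jun26} handles this by choosing $t$ maximal with $\ga_{i_{t-1}}\subset\ga_{i_t}=\cdots=\ga_{i_s}$. An element $a\in\ga_{i_t}\backslash\ga_{i_{t-1}}$ then kills all components from $i_t$ through $i_s$, including the top endpoint. So $am\neq0$ and $l_M(am)\leq i_{t-1}-i_1<l$, the required contradiction.

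The chain structure behind this choice of $t$ comes from the endpoint inclusions above together with the same maximal-block argument, run from the top down. I would mirror the proof of Lemma \ref{a13Jun26} verbatim, replacing $\deg_M$ by $l_M$ and noting that killing the top block strictly shrinks the support interval. I expect verifying that this chain reduction is legitimate to be the delicate point. If it turns out not to be, the fallback is to show directly, for each $\nu$, that $\ga_{i_s}\subseteq\ga_{i_\nu}$ and $\ga_{i_\nu}\subseteq\ga_{i_s}$, by applying the key step to appropriate elements of $\ga_{i_s}$ and of $\ga_{i_\nu}$.
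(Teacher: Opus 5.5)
Your first part and your endpoint step are correct: minimality of $l$ gives $\ga_{i_s}\subseteq\ga_{i_\nu}$ and $\ga_{i_1}\subseteq\ga_{i_\nu}$ for all $\nu$. The gap is the interior step.

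The chain $\ga_{i_{t-1}}\subsetneq\ga_{i_t}=\cdots=\ga_{i_s}$ does not follow from anything you have proved. In fact it is incompatible with your own inclusion $\ga_{i_s}\subseteq\ga_{i_{t-1}}$, so no such $t$ exists. If $m\notin\CU\CA(M)$, what actually happens is that $\ga_{i_\nu}\supsetneq\ga_{i_s}=\ga_{i_1}$ for some interior $\nu$. Any $a\in\ga_{i_\nu}\setminus\ga_{i_s}$ then kills $m_{i_\nu}$ but neither end component, so $l_M(am)=l$ and minimality gives no contradiction. Your fallback fails for the same reason.

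Minimality of $l$ alone cannot suffice. Take $K$ a field, $A=A_0=K\times K$ concentrated in degree $0$, $M_0=M_2=A_0$, $M_1=A_0e_1$ with $e_1=(1,0)$, $M_i=0$ otherwise, and $m=(1,e_1,1)$. Every nonzero element of $N=Am$ has length $2$, so $m\in\CG(N)$. Yet $\lann_{A_0}(1)=0\neq\lann_{A_0}(e_1)$. This $M$ is of course not Euclidean.

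The paper's proof follows the same template and has the same weak point. It writes $\ga_{i_\nu}\subseteq\ga_{i_s}$, although its stated reason yields the reverse inclusion, and then assumes the chain. So mirroring it verbatim carries the gap over rather than closing it.

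The missing idea is to apply the division property to homogeneous elements, not just minimality inside $N$. Let $0\neq m_i\in M_i$ and $u\in M_i$. Then $u=qm_i+r$ with $r=0$ or $l_M(r)<l_M(m_i)=0$, so $r=0$. Comparing degree-$i$ components gives $u=q_0m_i$. Thus every nonzero $M_i$ is a simple $A_0$-module (the easy half of Theorem \ref{12Jun26}), and each $\ga_{i_\nu}=\lann_{A_0}(m_{i_\nu})$ is a maximal left ideal of $A_0$.

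Your top-endpoint inclusion $\ga_{i_s}\subseteq\ga_{i_\nu}\neq A_0$ then forces $\ga_{i_\nu}=\ga_{i_s}$ for every $\nu$. Hence $\lann_{A_0}(m)=\bigcap_\nu\ga_{i_\nu}=\ga_{i_\nu}$ for all $\nu$, and $m\in\CU\CA(M)$. Neither the bottom endpoint nor any chain argument is needed.
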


\begin{proof} By Proposition \ref{A28May26}, the set of 1-generators of the $A$-module $N$ contains  the non-empty  set $\CG (N)$. Let us show that $\CG (N)\subseteq \CU\CA (M)$. Suppose that $m=m_{i_1}+\cdots +m_{i_s}\in \CG (N)$   where $m_{i_k}\in M_k\backslash \{ 0\}$ and  $i_1<\cdots <i_s$. Clearly, 
$$
\ga_{i_\nu}:=\lann_{A_0}(m_{i_\nu})\subseteq \ga_{i_s}:=\lann_{A_0}(m_{i_s})\;\; \text{for all}\;\;\nu=1,\ldots , s
$$
since $l_M(am)<l_M(m)$ for all elements $a\in \lann_{A_0}(m_{i_s})$.
Suppose that the element $m$ is not a uni-annihilator element. This means that  
 $s\geq 2$ and there is an index, say  $t$,  such that  $\ga_{i_{t-1}}\subset \ga_{i_t}=\ga_{i_{t+1}}=\cdots =\ga_m$. Take an element $a\in \ga_{i_t}\backslash \ga_{i_{t-1}}$. Then $l_M (am)=i_{t-1}<i_s$, a contradiction.
 \end{proof}

{\bf Filtered rings and Euclidean  filtered modules.}  Let $B=\bigcup_{i\in N}B_i$ be a filtered ring, ${\rm gr}(B)=\bigoplus_{i\in \N}B_i/B_{i-1}$ be the associated graded ring of $B$, $M=\bigcup_{i\in \N}M_i$ be a filtered $R$-module and  ${\rm gr}(M)=\bigoplus_{i\in \N} M_i/M_{i-1}$ be the associated graded ${\rm gr}(B)$-module. Let 
$\deg_{{\rm gr}(M)}\Big(\sum_{i\geq 0}m_i\Big):=\max \{ i\in \N \, | \, m_i\neq 0\}$ and $m_i\in M_i/M_{i-1}$. 
Let us consider the maps
%\marginpar{gr-map-B,gr-map-M}
\begin{align}
{\rm gr}:& B \ra {\rm gr}(B), \;\; b\mapsto b+B_{i-1}\in B_i/B_{i-1}\;\; {\rm where}\;\; i=i(b):=\min \{ i'\in \N\, | \, b\in B_{i'}\}, \label{gr-map-B}\\
l_{{\rm gr}(M)}&: {\rm gr}(M) \ra {\rm gr}(M), \;\; m=\sum_{i=0}^nm_i\mapsto m_n \;\; {\rm where}\;\; n=\deg_{{\rm gr}(M)}(m). \label{gr-map-M}
\end{align}
These maps are not  additive map. For an element $m\in M$, the {\em filtration degree} of $m$,  $\deg_M(m)$,  is a unique number $i=i(m)$ such that $m\in M_i\backslash M_{i-1}$, i.e. $\deg_M (m):=\deg_{{\rm gr}(M)}({\rm gr}(m))$. In the  particular case $M=B$, $\deg_B (b):=\deg_{{\rm gr}(B)}({\rm gr}(b))$ for all elements $b\in B$.

\begin{proposition}\label{FiltEucl-28May26}%\marginpar{FiltEucl-28May26}
Let $B=\bigcup_{i\in N}B_i$ be a filtered ring, ${\rm gr}(B)=\bigoplus_{i\in \N}B_i/B_{i-1}$ be the associated graded ring of $B$, $M=\bigcup_{i\in \N}M_i$ be a filtered $R$-module and  ${\rm gr}(M)=\bigoplus_{i\in \N} M_i/M_{i-1}$ be the associated graded ${\rm gr}(B)$-module. Suppose that the ${\rm gr}(B)$-module  ${\rm gr}(M)$ is a Euclidean module with Euclidean function $\deg_{{\rm gr}(M)}$.
% where  $\deg_{{\rm gr}(M)}\Big(\sum_{i\geq 0}m_i\Big):=\max \{ i\in \N \, | \, m_i\neq 0\}$ and $m_i\in M_i/M_{i-1}$. 
 Then:

\begin{enumerate}

\item  The $B$-module $M$ is a Euclidean module with Euclidean function $\deg_M$.  Furthermore, for all elements $u,v\in M$ such that $\deg_M(u)\geq \deg_M(v)$ there exist elements $q\in B$ and $r\in M$ such that $\deg_B (q)=\deg_M(u)-\deg_M(v)$ and $ \deg_M(r)<\deg_M(v)$. 

\item The  $B$-module $M$ is a principal submodule  module.

\item If $N$ is a nonzero submodule of $M$ then the set of 1-generators of the $B$-module $N$ contains  the non-empty  set $\CG (N):= \{ n\in N \, | \, \deg_M(n)=l(N) \}$, where $l(N):=\min \{ \deg_M(n')\, | \, 0\neq n'\in N \}$. Similarly, the set of 1-generators of the ${\rm gr}(B)$-module ${\rm gr}(N)$ contains  the non-empty  set $\CG ({\rm gr}(N)):= \{ \nu\in {\rm gr}(N) \, | \, \deg_{{\rm gr}(M)}(\nu)=l({\rm gr}(N)) \}$, where $l({\rm gr}(N)):=\min \{ \deg_M(\nu')\, | \, 0\neq \nu'\in {\rm gr}(N) \}$. Then $l(N)=l({\rm gr}(N))$ and ${\rm gr}(\CG (N))=\CG ({\rm gr}(N))\subseteq \CU\CA ({\rm gr}(N))$.

\end{enumerate}
\end{proposition}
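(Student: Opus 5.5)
For statement 1, I would lift the Euclidean property from ${\rm gr}(M)$ to $M$ by induction on $\deg_M(u)$, following the proof of Theorem \ref{Sig-Del-28May26}.(1). Let $u,v\in M$ be nonzero with $i=\deg_M(u)$ and $j=\deg_M(v)$. If $i<j$, take $q=0$ and $r=u$. Suppose $i\geq j$. The leading forms ${\rm gr}(u)\in M_i/M_{i-1}$ and ${\rm gr}(v)\in M_j/M_{j-1}$ are nonzero homogeneous elements of ${\rm gr}(M)$. Since ${\rm gr}(M)$ is a Euclidean ${\rm gr}(B)$-module with the $\N$-degree, Theorem \ref{11Jun26} gives $M_i/M_{i-1}=({\rm gr}(B))_{i-j}\,{\rm gr}(v)$. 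So there is $b\in B_{i-j}$ with $(b+B_{i-j-1}){\rm gr}(v)={\rm gr}(u)$. Then $u-bv\in M_{i-1}$, that is, $\deg_M(u-bv)<i$. Note that $b\notin B_{i-j-1}$, since otherwise $bv\in M_{i-1}$, which would force ${\rm gr}(u)=0$. By induction on $i$, $u-bv=q'v+r$ with $\deg_M(r)<j$, and every correction term $q'$ has filtration degree $<i-j$. Setting $q=b+q'$ gives $\deg_B(q)=i-j$ exactly, because the top component $b$ is not cancelled. The target poset is $\N$, so it is Artinian and the induction terminates. This proves statement 1, including the refinement about $\deg_B(q)$.

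Statement 2 then follows from statement 1 and Proposition \ref{A28May26}.(1). The first part of statement 3 is Proposition \ref{A28May26}.(2) applied to $(M,\deg_M)$. The second part is the same proposition applied to the Euclidean module ${\rm gr}(M)$ and its submodule ${\rm gr}(N)=\bigoplus_i (N\cap M_i+M_{i-1})/M_{i-1}$, using the induced filtration $N_i=N\cap M_i$.

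For $l(N)=l({\rm gr}(N))$, first note that ${\rm gr}(N)$ is generated by the leading forms ${\rm gr}(n)$ with $n\in N\setminus\{0\}$, and $\deg_{{\rm gr}(M)}({\rm gr}(n))=\deg_M(n)$. Hence the minimal degree $l({\rm gr}(N))$ is attained on a homogeneous element, and the homogeneous elements of ${\rm gr}(N)$ of degree $k$ are exactly the leading forms of elements of $N$ of degree $k$, together with $0$. A nonhomogeneous element of ${\rm gr}(N)$ has degree at least that of its homogeneous components, so both minima coincide. This gives ${\rm gr}(\CG(N))\subseteq\CG({\rm gr}(N))$ directly.

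The main obstacle is the reverse inclusion together with $\CG({\rm gr}(N))\subseteq\CU\CA({\rm gr}(N))$. Let $\nu\in\CG({\rm gr}(N))$ be written as $\nu=\nu_{i_1}+\cdots+\nu_{i_s}$ with $i_1<\cdots<i_s=l$. Every nonzero component has degree $\geq l$, because each component lies in ${\rm gr}(N)$, which is a graded submodule. Since $i_s=l$ is the largest index, this forces $s=1$, so $\nu$ is homogeneous of degree $l$. Then $\nu={\rm gr}(n)$ for some $n\in N$ with $\deg_M(n)=l$, that is, $n\in\CG(N)$, which gives the reverse inclusion. Finally, a homogeneous element is trivially uni-annihilator, since the definition of $\CU\CA$ only compares the annihilator with those of its nonzero components. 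I expect the only subtle point to be stating precisely this structure of ${\rm gr}(N)$ as a graded submodule of ${\rm gr}(M)$.
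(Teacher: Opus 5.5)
Your proposal is correct. For statements 1 and 2 it follows the paper's argument: lift a homogeneous quotient from ${\rm gr}(B)$ via Theorem \ref{11Jun26}, subtract, and repeat. Your formal induction makes the paper's ``$d(q)=\delta(a)$'' precise, by noting that $b\notin B_{i-j-1}$ while every later correction term lies in $B_{i-j-1}$.

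For statement 3 you take a genuinely different route.
\begin{itemize}
\item The paper asserts $l(N)=l({\rm gr}(N))$ as clear and deduces only the inclusion ${\rm gr}(\CG(N))\subseteq\CG({\rm gr}(N))$. It then obtains $\CG({\rm gr}(N))\subseteq\CU\CA({\rm gr}(N))$ from Lemma \ref{a13Jun26}, which compares the annihilators of the components of a possibly non-homogeneous element of minimal degree.
\item You observe that ${\rm gr}(N)$ is a graded submodule and the Euclidean function is the top degree, so every element of $\CG({\rm gr}(N))$ must be homogeneous.
\end{itemize}
That single observation does three things. It justifies $l(N)=l({\rm gr}(N))$. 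It gives the reverse inclusion $\CG({\rm gr}(N))\subseteq{\rm gr}(\CG(N))$, which the claimed equality needs but the paper's proof does not address. It also makes the uni-annihilator property immediate, so Lemma \ref{a13Jun26} is not needed. This spares you the annihilator-chain step in that lemma's proof, whose stated inclusion $\lann_{A_0}(m_{i_\nu})\subseteq\lann_{A_0}(m_{i_s})$ is actually the reverse of what the degree argument gives.

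The paper's route has one advantage: it reuses a lemma that covers arbitrary, not necessarily graded, submodules of ${\rm gr}(M)$, where minimal-degree elements need not be homogeneous. In the situation of this proposition, though, your argument is both more elementary and more complete.
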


\begin{proof} In this proof,  $d:=\deg_M$,  $\d:=\deg_{{\rm gr}(M)}$ and  $\bu :={\rm gr }(u)$ and $\bq :={\rm gr}(q)$ for  element elements $u\in M$ and $q\in B$.

1. Let $u,v\in M$. If $d(u)< d(v)$ then $u=0\cdot v+u$. 

Suppose that $d(u)\geq  d(v)$. We  have to show that there exist elements $q\in B$ and $r\in M$ such that $\deg_B (q)=d(u)-d(v)$ and $ d(r)<d(v)$. Since the ${\rm gr}(B)$-module  ${\rm gr}(M)$ is a Euclidean module with Euclidean function $\d$, for the homogeneous elements $\bu,\bv\in {\rm gr}(M)$ there exists a homogeneous element $\oa\in {\rm gr}(B)$ such that $\bu = \oa\bv$, by Theorem \ref{11Jun26}. The ${\rm gr}(B)$-module  ${\rm gr}(M)$ is $\N$-graded. Therefore,  by comparing the elements of the  graded degree $\d (\bu)$ in the equality 
 $\bu = \oa\bv$ we may assume that $\deg_{{\rm gr}(B)}(\oa)=\d (\bu)-\d(\bv)$. Then $d(u_1)<d(u)$ where $u_1:=u-av\in M$.

Now, we can repeat the same argument but for the element 
 $u_1$ rather than $u$ and then keep  repeating it finitely many times we obtain  elements, say  $a, \ldots,  b\in B$, such that $d(a)>\cdots >d(b)$ and 
 $d(u-(a+\cdots +b)v)<d(v)$. Now, $u=qv+r$ where $q:=a+\cdots +b$ and $r:=u-(a+\cdots +b)v$.  Clearly, $d(q)=\d (a)=\d(\bu)-\d(\bv)=d(u)-d(v)$.

2. Statement 2 follows from statement 1 and Proposition \ref{A28May26}.(1).

3. Clearly, $l(N)=l({\rm gr}(N))$. Therefore,   ${\rm gr}(\CG (N))\subseteq \CG ({\rm gr}(N))$.  Now, the inclusion $\CG ({\rm gr}(N))\subseteq \CU\CA ({\rm gr}(N))$ follows from Lemma \ref{a13Jun26}.

\end{proof}

%%%%%%%%%%%%%   Section 4    %%%%%%%%

\section{Submodules of direct sums of Euclidean modules and reduced echelon forms}\label{SUB-FACTOR-MOD} %\marginpar{SUB-FACTOR-MOD}

{\bf Reduced echelon form.}  Let $M=\bigoplus_{i=1}^nM_i$ be a direct sum of Euclidean $R$-modules $M_i$ and 
 $M_{m,n}(M_1,\ldots , M_n)$ be the set of all $m\times n$ matrices
$$
A = \begin{pmatrix}
a_{11} & a_{12} & \cdots & a_{1n} \\
a_{21} & a_{22} & \cdots & a_{2n} \\
\vdots & \vdots & \ddots & \vdots \\
a_{m1} & a_{m2} & \cdots & a_{mn}
\end{pmatrix}\;\; \text{such that $a_{ij}\in M_j$ for all $i$ and $j$.}
$$
On the rows $a_1, \ldots , a_m$ of the matrix $A$, we can do two types of operations -- {\em elementary row operations}:
\begin{eqnarray*}
 &{\rm (ER1)}& a_i\mapsto a_i+\sum_{j\neq i}r_ja_j\;\; {\rm where}\;\; r_j\in R,  \\
 &{\rm (ER2)}& a_i\mapsto a_j, \;\; a_j\mapsto a_i.
\end{eqnarray*}
For a fixed row, (ER1) adds a linear combination of the other rows to it, and (ER2) swaps two rows. 
 These operations are reversible and  they change the generators $a_1, \ldots a_m$ of the $R$-submodule $N=N(a_1, \ldots , a_m)=\sum_{i=1}^mRa_i$ of $M$ to another set of generators that also contains $m$ elements.    
Two matrices $A, A'\in  M_{m,n}(M_1, \ldots , M_n)$ are called {\bf left-row equivalent}, $A\sim A'$, if one of them can be obtained from the other by finitely many elementary row operations (ER1) and (ER2). The relation $\sim$ is an equivalence relation.  If $A\sim A'$ then $N(A)=N(A')$.

The following lemma, which is a generalization of the Euclidean division algorithm in the case of two elements,  is used in many proofs.

\begin{lemma}\label{EuclDivAlg}%\marginpar{EuclDivAlg}

Let $M$ be a Euclidean $R$-module and $A=(a_{11} \, a_{12}\,\ldots \,  a_{1m})^t\in M_{m,1}(M)$. Then $A\sim (\gcd(a_{11},  \ldots ,  a_{1m}),0,\ldots , 0)^t$.
\end{lemma}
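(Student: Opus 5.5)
We run the Euclidean algorithm on the column of entries, using only the row operations (ER1) and (ER2), and use the Artinian property of $(\im(d_M),\geq)$ to force termination. If all entries are zero there is nothing to prove (take $\gcd=0$). Otherwise note first that (ER1) and (ER2) preserve the submodule $N:=\sum_{i=1}^m Ra_{1i}$, since each operation is invertible and replaces the generating set by another generating set of $N$. Hence whenever we reach a matrix of the form $(g,0,\ldots,0)^t$, we have $Rg=N$. By definition this means $g=\gcd(a_{11},\ldots,a_{1m})$, and that is what the lemma asserts.

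\emph{Step 1.} Using (ER2), move to the first position an entry whose degree is minimal among the nonzero entries. Such an entry exists because the nonzero entries form a finite set. Call this entry $v$.

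\emph{Step 2.} For each other nonzero entry $u$, apply the left division property to write $u=qv+r$ with $r=0$ or $d_M(r)<d_M(v)$. The operation (ER1) $u\mapsto u-qv$, which subtracts $q$ times the first row from this row, replaces $u$ by $r$.

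\emph{Step 3.} After Step 2, either all entries below the first are zero, and we are done, or some nonzero entry has degree strictly smaller than $d_M(v)$. In the second case we return to Step 1.

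The main point is termination. Each pass through Steps 1--3 either ends the procedure or produces a new pivot $v'$ with $d_M(v')<d_M(v)$. An infinite run would therefore give a strictly descending chain $d_M(v)>d_M(v')>d_M(v'')>\cdots$ in $\im(d_M)$. Since $(\im(d_M),\geq)$ is Artinian, this is impossible.

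Because $d_M$ only takes values in a partially ordered set, we never compare degrees of different remainders with each other. Step 1 only needs an entry that is minimal, not a minimum, and the descent argument only needs the comparison $d_M(r)<d_M(v)$ against the current pivot. So a minimal element among the finitely many nonzero degrees suffices.

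There is one subtlety in Step 3. We need a remainder that is strictly below $d_M(v)$, and the division property guarantees exactly this for each nonzero remainder. After re-choosing the pivot in Step 1 among the current nonzero entries, the new pivot is still strictly below $d_M(v)$, because it is minimal and some entry lies below $d_M(v)$.

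Finally, once the column has the form $(g,0,\ldots,0)^t$, the first paragraph shows that $Rg=N$, so $g$ is a greatest common divisor.
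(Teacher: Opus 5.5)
Your proof is correct, but it is organized differently from the paper's. You run one simultaneous reduction on the whole column. You pick an entry of minimal degree as pivot, replace every other nonzero entry by its remainder via (ER1), and repeat. Termination comes from the strictly decreasing pivot degrees in the Artinian poset $\im(d_M)$. The final entry $g$ is a gcd because (ER1)--(ER2) preserve the generated submodule, so $Rg=\sum_i Ra_{1i}$, which is literally the definition of a gcd.

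The paper instead inducts on $m$. The case $m=2$ is the classical two-term chain of successive remainders $a_{11}=q_1a_{12}+r_1$, $a_{12}=q_2r_1+r_2,\ldots$. The last nonzero remainder is identified as a gcd through Proposition~\ref{A14Jul26}: it divides both entries and is an $R$-combination of them. The inductive step replaces $a_{12},\ldots,a_{1m}$ by $\gcd(a_{12},\ldots,a_{1m}),0,\ldots,0$ and then applies $\gcd(a_{11},\ldots,a_{1m})=\gcd(a_{11},\gcd(a_{12},\ldots,a_{1m}))$.

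The paper's route reduces everything to the familiar two-element algorithm. Yours avoids the induction and the associativity of $\gcd$. It also makes explicit two points the paper leaves implicit. First, zero entries need handling, since the division property is only stated for nonzero elements. Second, $d_M$ takes values in a merely partially ordered set, so you need a minimal rather than a minimum pivot and only compare against the current pivot. Both arguments produce \emph{a} gcd, which is the intended reading, since gcds are determined only up to the relation in Corollary~\ref{b16Jun26}.(2).
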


\begin{proof} To prove the lemma, we use induction on $m\geq 2$. The initial case, $m=2$ is basically the classical Euclidean division algorithm:  
\begin{eqnarray*}
 a_{11}&=&q_1a_{12}+r_1, \;\; d_M(r_1)<d_M(a_{12}), \\
 a_{12}&=&q_2r_1+r_2, \;\;d_M(r_2)<d_M(r_1),  \\
r_1&=&q_3r_2+r_3, \;\; d_M(r_3)<d_M(r_2),   \\
&\cdots& \\
 r_i&=&q_{i+2}r_{i+1}+r_{i+2}, \;\; d_M(r_{i+2})<d_M(r_{i+1}),\\
 r_{i+1}&=&q_{i+3} r_{i+2}
\end{eqnarray*}
for some elements $q_1,\ldots , q_{i+3}\in R$ and $r_1,\ldots , r_{i+2}\in M$. It  follows that $a_{11}=\alpha r_{i+2}$,  $a_{12}=\beta r_{i+2}$ and $r_{i+2}=\g a_{11}+\d a_{12}$ for some elements $\alpha,\beta , \g , \d\in R$. By Proposition \ref{A14Jul26}, $r_{i+2}=\gcd(a_{11},  \ldots ,  a_{1m})$. 
Furthermore, $A\sim (r_{i+2},0)^t$ (use the equalities above), as required.

Suppose that $m>2$ and the lemma is true for all $m'<m$. By induction, $(a_{12}\, \ldots\, a_{1m})^t\sim  (\gcd(a_{12},  \ldots ,  a_{1m}),0,\ldots , 0)^t$. Therefore, 
    \begin{eqnarray*}
A &\sim &(a_{11},\gcd(a_{12},  \ldots ,  a_{1m}),0,\ldots , 0)^t\sim (\gcd(a_{11},  \gcd( a_{12},\ldots ,  a_{1m})),0,\ldots , 0)^t\\
&=&(\gcd(a_{11},  \ldots ,  a_{1m}),0,\ldots , 0)^t.
\end{eqnarray*}
\end{proof}
\begin{definition}[{\bf Reduced echelon form}]
Let $M=\bigoplus_{i=1}^nM_i$ be a direct sum of Euclidean $R$-modules $M_i$.
A matrix $A \in M_{m,n}(M_1, \ldots , M_n)$ is said to be in {\bf (left) reduced echelon form}
if the following conditions hold:
\begin{enumerate}
    \item Every nonzero row of $A$ has a leftmost nonzero entry (a \emph{pivot}).
    \item If the pivot of row $i$ is in column $j$, then all entries in column $j$
  below  the pivot are zero  and the degrees of entries above the pivot are strictly less that the degree of the pivot (in the Euclidean  module $M_j)$.
    %\item The pivot in each nonzero row is a \emph{minimal} element of its left ideal
         % with respect to the Euclidean function.
    \item The pivot of each row lies strictly to the right of the pivot of the row above it.
    %\item Every pivot is left-invertible (and may be scaled to $1$ if $R$ has $1$).
    
\end{enumerate}
The set of pivots of $A$ is denoted by $\mP (A)$. 
\end{definition}

\begin{theorem}[{\bf Existence of reduced echelon form}] \label{REForm}%\marginpar{REForm}

Let $M=\bigoplus_{i=1}^nM_i$ be a direct sum of Euclidean $R$-modules $M_i$. Then every nonzero  matrix
$A \in M_{m,n}(M_1, \ldots ,M_n)$ is left-row equivalent to a matrix in
left reduced echelon form.

% where each pivot is  the greatest common divisor of all elements of the matrix $A$ in the column of the pivot. 

\end{theorem}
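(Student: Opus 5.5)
We argue by induction on the number $n$ of columns, mimicking Gaussian elimination. Let $j$ be the index of the leftmost nonzero column of $A$; columns $1,\ldots ,j-1$ are zero and remain zero under (ER1) and (ER2). We apply Lemma \ref{EuclDivAlg} to the $j$th column $(a_{1j},\ldots ,a_{mj})^t\in M_{m,1}(M_j)$. The row operations that reduce this column to $(\gcd(a_{1j},\ldots ,a_{mj}),0,\ldots ,0)^t$ act on entire rows of $A$, since every operation in the proof of Lemma \ref{EuclDivAlg} is of type (ER1) or (ER2) on the rows. We therefore obtain $A\sim A'$, where $A'$ has a nonzero entry $a'_{1j}$ in position $(1,j)$, zeros below it, and zeros in columns $<j$. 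If $m=1$ or $j=n$ we simply stop.

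Next we apply the inductive hypothesis to the submatrix $A''$ formed by rows $2,\ldots ,m$ and columns $j+1,\ldots ,n$. This submatrix lies in $M_{m-1,n-j}(M_{j+1},\ldots ,M_n)$, a matrix over a direct sum of fewer Euclidean modules. (If $A''=0$ there is nothing to do.) The row operations on rows $2,\ldots ,m$ of $A'$ that bring $A''$ into reduced echelon form leave columns $1,\ldots ,j$ untouched, because those entries in rows $2,\ldots ,m$ are zero. After this step, conditions 1 and 3 of the definition hold, and condition 2 holds except for the requirement on entries \emph{above} each pivot. The induction is on $n$; the case $n=1$ is exactly Lemma \ref{EuclDivAlg}.

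The final step reduces the entries above the pivots, and it is where some care is needed. Let the pivots be $p_1=a_{1j_1},\ldots ,p_s=a_{sj_s}$ with $j_1<\cdots <j_s$. We process the pivots in increasing order $\nu=2,\ldots ,s$. For each row $i<\nu$ we use the left division property in $M_{j_\nu}$ to write $a_{ij_\nu}=q\,p_\nu+r$, where $r=0$ or $d_{M_{j_\nu}}(r)<d_{M_{j_\nu}}(p_\nu)$, and then perform the (ER1) operation $a_i\mapsto a_i-q a_\nu$. This replaces $a_{ij_\nu}$ by $r$. Because row $\nu$ is zero in all columns $<j_\nu$, the entries of row $i$ in columns $<j_\nu$ are unchanged, and in particular its own pivot and the already reduced entries above earlier pivots stay the same. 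Later steps $\nu'>\nu$ only alter row $i$ in columns $\geq j_{\nu'}>j_\nu$, so earlier reductions persist. One point needs checking: the reduction needs $p_\nu\ne 0$, which holds. There is also the phrasing ``degrees of entries above the pivot strictly less'' when an entry becomes $0$; since $d$ is defined only on nonzero elements, we read a zero entry as trivially satisfying the condition, as in the convention $\deg(0)=-\infty$. With this, the resulting matrix is in left reduced echelon form and is left-row equivalent to $A$.
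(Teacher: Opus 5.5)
Your proof is correct and takes essentially the same route as the paper: induction on $n$, Lemma~\ref{EuclDivAlg} applied to the first nonzero column, the inductive hypothesis on the submatrix below the first row, and then the division algorithm to reduce the entries above the pivots. Where you delete columns $1,\ldots ,j$ the paper deletes only column $j$, which is harmless because the earlier columns are zero; and your check that processing the pivots in increasing order keeps both the earlier reductions and the zeros below the pivots intact fills in a detail the paper leaves implicit.
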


\begin{proof} To prove the theorem, we use induction on $n$. The case $n=1$ is Lemma \ref{EuclDivAlg}.

%obvious since  by using the division algorithm we see that the $m\times 1$ matrix $A=(a_{11},\ldots , a_{m1})^t$, where `$t$' means the transposition, is equivalent to the $m\times 1$ matrix $(\gcd(a_{11},\ldots , a_{m1}),0,\ldots , 0)^t$.

Suppose that $n>1$ and the result is true for all $n'<n$. Choose the first nonzero column of the matrix $A$, say $(a_{1j_1},\ldots , a_{mj_1})^t$. Then by applying  elementary row operations we obtain a matrix, say $B$,  where the column  indexed by $j_1$ is equal to $(\gcd (a_{1j_1},\ldots , a_{mj_1}),0,\ldots , 0)^t$. Let $B'$ be the matrix  obtained from  $B$ by deleting the first row and the column  indexed by $j_1$. 
Inductively, $B'$ 
 has a reduced echelon form. After reinstating the deleted row and column, we adjust the entries in the first row and above the pivots to ensure their degrees are strictly smaller than the degrees of the respective pivots (by using the division algorithm).
\end{proof}

{\bf The strong reduced echelon form.}  
Let $M=\bigoplus_{i=1}^nM_i$ be a direct sum of Euclidean $R$-modules $M_i$. The $R$-module $M$ admits a strictly descending of chain of $R$-submodules 
$$
M=M_{\geq 1}\supset \cdots  \supset M_{\geq i}\supset \cdots  \supset M_{\geq n}\supset M_{\geq n+1}:=\{ 0\}\;\; {\rm where}\;\;   M_{\geq i}:=M_i\oplus\cdots \oplus M_n.
$$
The descending chain defines the $R$-module filtration $\{ M_{\geq i}\}$ on $M$. The associated graded $R$-module ${\rm gr}(M):=\bigoplus_{i=1}^n {\rm gr}(M)_i$, where ${\rm gr}(M)_{\geq i}:=M_i/M_{\geq i+1}\simeq M_i$, is isomorphic to the $R$-module $M$. Let $N$ be an $R$-submodule of $M$. Then the $R$-modules $N$ and $\bM :=M/N$ admit the induced filtrations $ \{ N_{\geq i}:=N\cap M_{\geq i} \}$ and   $ \{ \bM_i:= (N +M_{\geq i})/N \}$, respectively. Then the associated graded $R$-module ${\rm gr}(N):=\bigoplus_{i=1}^n{\rm gr}(N)_i$ is a submodule of the $R$-module $M\simeq {\rm gr}(M)$ where ${\rm gr}(N)_i:=N_{\geq i}/N_{\geq i+1}$. For each natural number $n\geq 1$, let  $[n]:=\{ 1, \ldots , n\}$.

\begin{definition}
The set $\mP_s (N):=\mP_s (N,M):=\{ i\in [n]\, | \, {\rm gr}(N)_i\neq 0 \}$ is called the {\bf strong pivot set} and the elements of the set $\mP_s (N)$ are called the set of {\bf strong pivots} of $N$.
\end{definition}

The associated graded $R$-module ${\rm gr}(\bM):=\bigoplus_{i=1}^n{\rm gr}(\bM)_i$  
is a direct sum of the $R$-modules 
\begin{eqnarray*}
{\rm gr}(\bM)_i &:=& \bM_{\geq i}/\bM_{\geq i+1}=(M_{\geq i}+N)/(M_{\geq i+1}+N)\simeq M_{\geq i}/(M_{\geq i+1}+M_{\geq i}\cap N)\\
&\simeq & \Big( M_{\geq i}/M_{\geq i+1}\Big)/\Big((M_{\geq i+1}+M_{\geq i}\cap N)/M_{\geq i+1}\Big)\simeq M_i/\Big( M_{\geq i}\cap N/M_{\geq i+1}\cap N\Big)\\
&\simeq& M_i/{\rm gr}(N)_i.
\end{eqnarray*}
As a result, there is the short exact sequence of graded $R$-modules:
%\marginpar{ses-grN}
\begin{equation}\label{ses-grN}
0\ra {\rm gr}(N)\ra {\rm gr}(M)\simeq M\ra {\rm gr}(\bM)\ra 0.
\end{equation}
By Theorem \ref{14Jun26}.(1), each  Euclidean $R$-module $M_i$ is a Noetherian $R$-module. Thus,  the $R$-module $M=\bigoplus_{i=1}^nM_i$  is also a Noetherian module. In particular, the  $R$-submodule $N=\sum_{i=1}^ma_i$ of $M$ is a finitely generated $R$-module where the elements $a_i=(a_{i1},\ldots , a_{in})\in M$ are generators of $N$ and $a_{ij}\in M_j$. As above, the $R$-module $N$ is determined by the $m\times n$ matrix $A=(a_{ij})\in M_{m,n}(M_1, \ldots , M_n)$. 
The number  $m$ is not fixed. We can always add a zero element of $N$ to the set of generators of $N$ to increase the number $m$. 
 So, if necessary, we may assume that our matrix $A$ (that depends on the generating set) contains zero row. To accommodate this situation, we consider the set $M_{fin,n}(M_1, \ldots, M_n)$ of all $\infty\times n$ matrices with only finitely many nonzero rows, i.e. 
 $$
 M_{fin,n}(M_1, \ldots, M_n)=\bigcup_{m\geq 1}M_{m,n}(M_1, \ldots, M_n).
 $$
 Then the operations (ER1) and (ER2) act on the set $ M_{fin,n}(M_1, \ldots, M_n)$.  
 In addition to the elementary row operations (ER1) and (ER2), on the rows $a_1, \ldots , a_m$ of the matrix $A$, we  perform a new  elementary row operation -- adding to the  zero row a linear combination of the  other rows:
\begin{eqnarray*}
{\rm (ER0)}\;\;&\;\;\;\;\; & 0\mapsto 0+\sum_{i=1}^m r_ia_i\;\; {\rm where}\;\; r_i\in R.
\end{eqnarray*}
On the level of generators, it means that we replace the  generating set $a_1, \ldots , a_m$ of the $R$-module $N$ by the generating set  $a_1, \ldots , a_m,\sum_{i=1}^mr_ia_i$.  
In the classical situation when the ring $R$ is a left Euclidean domain and all  $M_i=R$, this operation is redundant because nonzero elements of the ring $R$ act injectively on the modules $M_i=R$. In the general situation, this is not the case and this is the main reason that we have to add the operation (ER0) in order to obtain the strong reduced echelon form which is a more refined version of the reduced echelon form and it contains more information about the module as we will see later. The element $\sum_{i=1}^mr_ia_i$ can be deleted from the list of generators $a_1, \ldots , a_m,\sum_{i=1}^mr_ia_i$ by using the operation (ER1). 
 
Two matrices $A, A'\in  M_{m,n}(M_1, \ldots , M_n)$ and $A'\in  M_{m',n}(M_1, \ldots , M_n)$ are called {\bf strongly left-row equivalent}, $A\sim_s A'$, if one of them can be obtained from the other by finitely many elementary row operations (ER0), (ER1) and (ER2) (`$s$' stands for `strongly'). The relation $\sim_s$ is an equivalence relation.   If $A\sim_s A'$ then $N(A)=N(A')$ where $N(A)=N(a_1, \ldots , a_m)=\sum_{i=1}^mRa_i$ and $N(A')=N(a_1', \ldots , a_{m'}')=\sum_{i=1}^{m'}Ra_i'$.

\begin{definition}[{\bf Strong reduced echelon form}]
Let $M=\bigoplus_{i=1}^nM_i$ be a direct sum of Euclidean $R$-modules $M_i$, $0\neq A =(a_1, \ldots , a_s)^t\in M_{s,n}(M_1, \ldots , M_n)$, where $a_1, \ldots , a_s$ are the rows of the matrix $A$, $N(A)=\sum_{i=1}^sRa_i$  and $\mP_s(N(A))=\{ j_1, \ldots , j_s\}$ where $j_1< \cdots < j_s$. The  matrix $A$ is said to be in {\bf strong (left) reduced echelon form} if 
\begin{enumerate}

\item  for each $\nu = 1, \ldots, s$, $N_{\geq  j_\nu}=\sum_{\mu =\nu}^sRa_\mu$ and the element $a_{\nu j_\nu}\in M_{j_\nu}$ has the least degree in the set of nonzero elements of the $R$-module  ${\rm gr}(N)_{j_\nu}=Ra_{\nu j_\nu}$, and 

\item 
  the degrees of the  elements $a_{1j_\nu}, \ldots , a_{\nu-1, j_\nu}\in M_{j_\nu}$ are  strictly less that the degree of the element $a_{\nu j_\nu}\in M_{j_\nu}$.

\end{enumerate}
\end{definition}
It follows from the definition that  if the matrix $A$ is in strong reduced echelon form then it is also in reduced echelon form. 

Lemma \ref{a26Jul26} shows that if the matrix $A$ is in strong reduced echelon form then it contains a lot of information about the $R$-submodule $N(A)$ of $M$.

\begin{lemma}\label{a26Jul26}%\marginpar{a26Jul26}
Let $M=\bigoplus_{i=1}^nM_i$ be a direct sum of Euclidean $R$-modules $M_i$, $0\neq A =(a_1, \ldots , a_s)^t\in M_{s,n}(M_1, \ldots , M_n)$ be a matrix which is  in  strong  reduced echelon form   with $\mP_s(N(A))=\{ j_1, \ldots , j_s\}$ where $j_1< \cdots < j_s$. Then the matrices $0\neq  (a_\nu ,a_{\nu+1}, \ldots , a_s)^t\in M_{s-\nu+1,n}(M_{j_\nu}, \ldots , M_n)$,  where $\nu = 1, \ldots, s$, are in  strong  reduced echelon form.
\end{lemma}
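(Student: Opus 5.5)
The plan is to reduce everything to the identity $N'_{\geq j}=N_{\geq j}$ for $j\geq j_\nu$ and then read off both conditions of the definition from those for $A$. Fix $\nu\in\{1,\ldots,s\}$ and set $N:=N(A)$, $N':=N(a_\nu,\ldots,a_s)=\sum_{\mu=\nu}^sRa_\mu$. We regard $N'$ as a submodule of $M':=\bigoplus_{i\ge j_\nu}M_i=M_{\geq j_\nu}$. This makes sense because each row $a_\mu$ with $\mu\geq\nu$ lies in $N_{\geq j_\mu}\subseteq M_{\geq j_\nu}$, so its entries in the columns $1,\ldots,j_\nu-1$ vanish.

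\emph{Step 1: $N'=N_{\geq j_\nu}$.} This is exactly condition 1 of the definition for $A$ at the index $\nu$.

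\emph{Step 2: compare filtrations.} For $j\geq j_\nu$ we have $N'_{\geq j}=N'\cap M'_{\geq j}=N_{\geq j_\nu}\cap M_{\geq j}=N\cap M_{\geq j}=N_{\geq j}$, since $M_{\geq j}\subseteq M_{\geq j_\nu}$. Consequently ${\rm gr}(N')_j={\rm gr}(N)_j$ for all $j\geq j_\nu$. This gives $\mP_s(N',M')=\mP_s(N)\cap\{j_\nu,\ldots,n\}=\{j_\nu,\ldots,j_s\}$. Thus the number of rows $s-\nu+1$ matches the size of the strong pivot set, as the definition requires.

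\emph{Step 3: verify the two conditions.} For each $\mu\geq\nu$, Step 2 and condition 1 for $A$ give
\[
N'_{\geq j_\mu}=N_{\geq j_\mu}=\sum_{\lambda=\mu}^sRa_\lambda .
\]
The module ${\rm gr}(N')_{j_\mu}={\rm gr}(N)_{j_\mu}=Ra_{\mu j_\mu}$ is unchanged. Moreover $a_{\mu j_\mu}$ is the same element of $M_{j_\mu}$, and its degree is taken in the same Euclidean module $M_{j_\mu}$, so the least-degree property carries over. Condition 2 for the truncated matrix only involves the entries $a_{\nu j_\mu},\ldots,a_{\mu-1,j_\mu}$. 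These form a subset of $a_{1j_\mu},\ldots,a_{\mu-1,j_\mu}$, whose degrees are already strictly less than that of $a_{\mu j_\mu}$. Finally, $a_\nu\neq 0$ because ${\rm gr}(N)_{j_\nu}=Ra_{\nu j_\nu}\neq 0$, so the truncated matrix is nonzero.

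The only real obstacle is bookkeeping. One must interpret the truncated rows as elements of $M_{\geq j_\nu}$, which Step 1 justifies. One must also check that the filtration induced by $M'$ agrees with the one induced by $M$ in degrees $\geq j_\nu$, which is the intersection computation of Step 2. Everything else transfers verbatim from the hypothesis on $A$.
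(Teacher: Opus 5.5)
Your proposal is correct and follows the same approach as the paper, whose proof simply says the lemma follows from the definition of the strong reduced echelon form. Your write-up spells out the details behind that remark: the identity $N'_{\geq j}=N_{\geq j}$ for $j\geq j_\nu$ (with the indexing of $M_{\geq j_\nu}$ kept consistent), the resulting equality of strong pivot sets, and the fact that both conditions carry over verbatim.
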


\begin{proof} The lemma follows from the definition of the strong reduced echelon form. 
\end{proof}

 By Lemma \ref{a26Jul26}, the strong  reduced echelon form of $A$ encodes not only the generators of the $R$-module $N(A)$ but also generators of all the intersections $N\cap M_{\geq i}$ where $i=1, \ldots , n$. We will see that  each reduced echelon form of the matrix $A$ can be transformed (using (ER0)--(ER2)) to  a strong  reduced echelon form of it (Theorem \ref{STR-REForm}).
 In the case of a left Euclidean domain where  all $M_i=R^n$, the two forms coincide, and in this situation the main purpose of the reduced echelon form is that it also encodes the generators of the intersections described above.

%\begin{theorem}[{\bf Existence of strong reduced echelon form}] \label{STR-REForm}%\marginpar{STR-REForm}

%Let $M=\bigoplus_{i=1}^nM_i$ be a direct sum of Euclidean $R$-modules $M_i$, $0\neq A \in M_{m,n}(M_1, \ldots ,M_n)$ and $\mP_s(N(A))=\{ j_1, \ldots , j_s\}$  where $j_1< \cdots < j_s$. Then the  matrix
%$A$ is strongly left-row equivalent to a matrix in strong 
%   reduced echelon form.
%\end{theorem}

\begin{proof}[{\bf Proof of Theorem \ref{STR-REForm}}] The  nonzero  matrix
$A \in M_{m,n}(M_1, \ldots ,M_n)$ yields the $R$-submodule $N:=N(A)=\sum_{i=1}^mRa_i$ of $M$ where $a_i$ are the rows of $A$. Recall that  $\mP_s(N)=\{ j_1, \ldots , j_s\}$ where $j_1< \cdots < j_s$. By the definition of the  set $\mP_s(N)$, ${\rm gr}(N)=\bigoplus_{\nu =1}^s {\rm gr}(N)_{j_\nu}$ for some $R$-submodules $ {\rm gr}(N)_{j_\nu}$ of $M_{j_\nu}$. 
 Since the $R$-module $M_{j_1}$ is Euclidean and  the  $R$-module $ {\rm gr}(N)_{j_1}$ is an   $R$-submodule of $M_{j_1}$,  $ {\rm gr}(N)_{j_1}=Rb_{1 j_1}$ for some element $b_{1 j_1}\in \CG ({\rm gr}(N)_{j_1})\subseteq  M_{j_1}$, by  Proposition \ref{A28May26}.(1) (using (ER0)-(ER2)). 
The elements $b_{1 j_1}\in M_{j_1}$ can be chosen to have the least degree in the $R$-submodule $ {\rm gr}(N)_{j_1}$ (using (ER1) and (ER2)).  As a result (using the above operations (ER0)-(ER2)), for the  element  $b_{1 j_1}$, we obtain  an element 
$$
b_1=(0,\ldots , 0,b_{1 j_1}, \ldots , b_{1n})\in M_{\geq j_1},
$$
 i.e. $b_1 \equiv b_{1 j_1} \mod  M_{\geq j_1+1}$.  Let $A':= (b_1, a_2', \ldots , a_{m'}')\in M_{m',n}(M_1, \ldots, M_n)$ be the matrix that is obtained from the matrix $A$ by the above  operations (ER0)--(ER2). Using the fact that ${\rm gr}(N)_{j_1}=Rb_{1j_1}$ and the operations (ER1), we may assume that $a_2', \ldots , a_{m'}'\in N_{\geq j_1+1}$ or, equivalently,  $a_2', \ldots , a_{m'}'\in N_{\geq j_2}$ (since $N_{\geq j_1+1}=\cdots =N_{\geq j_2}$). 
 In particular, 
 $$
 A\sim_sA'\;\; {\rm  and}\;\;N(A)=N(A').
 $$  
 Since $N(A)=N(A')$, we have the equality $\mP_s(N(A'), M) =\mP_s(N(A), M)=\{ j_1, \ldots , j_s\}$. By the choice of the index $j_2$,
$$ 
\ga_{1j_1}:=\ann_R(b_{1j_1}):=\{r\in R\, | \, rb_{1j_1}=0 \}\subseteq \ann_R(b_{1j})\;\; \text{for all}\;\; j_1\leq  j<j_2.
$$
Therefore, 
$$
N_{\geq j_1+1}=\cdots =N_{\geq j_2-1}=N_{\geq j_2}=\ga_{1j_1}b_1+N\Big({\rm Rows}(A')\backslash \{ b_1\}\Big)\subseteq M_{\geq j_2}:=\bigoplus_{i=j_2}^nM_i.
$$
By Theorem \ref{14Jun26}.(1), the $R$-module $M_{\geq j_2}$ is a Noetherian module. Hence, its submodule  $\ga_{1j_1}b_1$ is a finitely generated $R$-module. Let $r_1b_1, \ldots , r_tb_1$ be its  set of generators where $r_1, \ldots , r_t\in R$. Then 
$$N_{\geq j_2}=\sum_{i=1}^tr_ib_1+N\Big({\rm Rows}(A')\backslash \{ b_1\}\Big)=N(A'')\subseteq M_{\geq j_2}
$$
 where $A''$ is the matrix with rows  ${\rm Rows}(A')\backslash \{ b_1\}$ and $r_1b_1,\ldots , r_tb_1$. Let $B$ be the matrix with rows   ${\rm Rows}(A')$ and $r_1b_1,\ldots , r_tb_1$, i.e. 
\[
B=\begin{pmatrix}
b_1\\[4pt]
A''
\end{pmatrix}.
\]
 Then $A'\sim_sB$. Since $N(A'')=N_{\geq j_2}\subseteq M_{\geq j_2}$, by induction,  $A''\sim_s C$ for some matrix $C=(c_{ij})\in M_{l, n-j_2+1}(M_{j_2},\ldots , M_n)$ which is in strong reduced echelon form. Therefore,
 
\[
B=\begin{pmatrix}
b_1\\[4pt]
A''
\end{pmatrix}\sim_s B':=\begin{pmatrix}
b_1\\[4pt]
C
\end{pmatrix}.
\]
Notice that $\mP_s(C)=\mP_s(A'')=\mP_s(N_{\geq j_2})=\{ j_2, \ldots , j_s\}= \mP_s(N(A))\backslash \{ j_1\}$. Now, using the operation (ER1), the degree of the element $b_{1j_\nu}\in M_{j_1}$, where $\nu =2, \ldots , s$, can be made smaller than the degree of the element $c_{\nu j_\nu}$ (where $C=(c_{ij})$). As the result, we obtain a matrix, say  $B''$,  which is in strong reduced echelon form and $B''\sim_sB'\sim_sA$, and so $A\sim_s B''$. 
\end{proof}

\begin{corollary} \label{aSTR-REForm}%\marginpar{aSTR-REForm}

Let $M=\bigoplus_{i=1}^nM_i$ be a direct sum of Euclidean $R$-modules $M_i$, $0\neq A \in M_{m,n}(M_1, \ldots ,M_n)$ and $\mP_s(N(A))=\{ j_1, \ldots , j_s\}$  where $j_1< \cdots < j_s$. Then:

\begin{enumerate}

\item  If $A\sim_s B$  and $A\sim_s B'$, where $B$ and $B'$ are matrices in strong    reduced echelon form, then   for all  $\nu =1, \ldots , s$,  
$
Rb_{\nu j_\nu}=R  
b_{\nu j_\nu}'={\rm gr}(N(A))_{j_\nu}\subseteq M_{j_\nu}$ and the elements $b_{\nu j_\nu}$ and   $b_{\nu j_\nu}'$ are elements of least degree in the Euclidean $R$-submodule ${\rm gr}(N(A))_{j_\nu}$ of $M_{j_\nu}$ where $b_\nu =(0, \ldots , 0, b_{\nu j_\nu}, \ldots , b_{\nu n})$ and $b_\nu' =(0, \ldots , 0, b_{\nu j_\nu}', \ldots , b_{\nu n}')$ are the rows of the matrices $B$ and $B'$, respectively.

\item  If $A\sim_s B=(b_1, \ldots , b_s)^t$, where $B$ is the  matrix in strong    reduced echelon form,  then for each $i=1, \ldots , n$, the matrix $(b_{\nu_i}, \ldots , b_s)^t$ is in   strong    reduced echelon form and $N(A)_{\geq i}=\bigoplus_{\mu =\nu_i}^sRb_\mu$  where $\nu_i=\min\{ \nu \in [s]\, | \, i\leq j_\nu\}$ where $[s]=\{1,\ldots , s\}$.
\end{enumerate}
\end{corollary}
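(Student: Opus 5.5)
The plan is to derive both statements directly from the definition of strong reduced echelon form, using only the fact that $N(B)=N(B')=N(A)$ whenever $A\sim_s B$ and $A\sim_s B'$. Hence the filtration $\{N_{\geq i}\}$, the graded pieces ${\rm gr}(N)_{j}$ and the strong pivot set $\mP_s(N)=\{j_1,\ldots ,j_s\}$ are the same for $A$, $B$ and $B'$.

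For statement 1, fix $\nu$. By condition 1 of the definition applied to $B$, we have $N_{\geq j_\nu}=\sum_{\mu\geq \nu}Rb_\mu$. Every row $b_\mu$ with $\mu>\nu$ lies in $M_{\geq j_{\nu+1}}\subseteq M_{\geq j_\nu+1}$. Therefore the image of $N_{\geq j_\nu}$ in ${\rm gr}(N)_{j_\nu}=N_{\geq j_\nu}/N_{\geq j_\nu+1}\subseteq M_{j_\nu}$ is $Rb_{\nu j_\nu}$, so ${\rm gr}(N(A))_{j_\nu}=Rb_{\nu j_\nu}$. The same argument for $B'$ gives ${\rm gr}(N(A))_{j_\nu}=Rb'_{\nu j_\nu}$. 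The least-degree property of $b_{\nu j_\nu}$ and $b'_{\nu j_\nu}$ is part of condition 1 of the definition. Consistently with Proposition \ref{A28May26}.(2), both elements lie in $\CG({\rm gr}(N)_{j_\nu})$ and are $1$-generators, and this reproves the equality of the two cyclic modules. The only point to check is the identification of ${\rm gr}(N)_{j_\nu}$ with a submodule of $M_{j_\nu}$ via the projection $M_{\geq j_\nu}\to M_{j_\nu}$. This identification is the one fixed before the definition of $\mP_s$.

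For statement 2, fix $i\in[n]$ and let $\nu_i=\min\{\nu\mid i\leq j_\nu\}$. Since ${\rm gr}(N)_k=0$ for $k\notin\mP_s(N)$, the chain $N_{\geq i}\supseteq N_{\geq i+1}\supseteq\cdots$ is constant between consecutive strong pivots, so $N_{\geq i}=N_{\geq j_{\nu_i}}$. If no $j_\nu\ge i$ exists, the submodule is zero and the sum is empty. Condition 1 then gives $N_{\geq i}=\sum_{\mu\geq\nu_i}Rb_\mu$, and Lemma \ref{a26Jul26} gives that $(b_{\nu_i},\ldots ,b_s)^t$ is in strong reduced echelon form.

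The main obstacle is the claim that this sum is direct, as the statement writes $\bigoplus$. I would prove it by downward induction on the pivots. Suppose $\sum_{\mu\geq\nu}r_\mu b_\mu=0$. Projecting onto $M_{j_\nu}$ kills every term except $r_\nu b_\nu$, which gives $r_\nu b_{\nu j_\nu}=0$. This alone does not force $r_\nu b_\nu=0$, because a tail of $b_\nu$ can survive when annihilators differ. I would try to use minimality of the degree of $b_{\nu j_\nu}$ together with the uni-annihilator phenomenon from Lemma \ref{a13Jun26}. If this fails in general, I would interpret $\bigoplus$ as the sum that is direct at the level of the associated graded module ${\rm gr}(N_{\geq i})=\bigoplus_{\mu\geq\nu_i}Rb_{\mu j_\mu}$, which does follow immediately from the computation in statement 1.
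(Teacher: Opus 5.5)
Your argument for statement 1 is correct. So are the parts of statement 2 asserting $N(A)_{\geq i}=N(A)_{\geq j_{\nu_i}}=\sum_{\mu\geq \nu_i}Rb_\mu$ and that $(b_{\nu_i},\ldots ,b_s)^t$ is in strong reduced echelon form, and these match the paper. For statement 1, your derivation directly from condition 1 of the definition is cleaner than the paper's pointer to the proof of Theorem \ref{STR-REForm}, since $B$ and $B'$ are arbitrary.

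\textbf{The gap is the directness of the sum.} It cannot be closed by the route you suggest, because the claim is false in general; the paper's proof only asserts it with ``clearly''. Take $R=\mathbb{Z}$ and $A=(\overline{1}\;\;1)$, where:
\begin{itemize}
\item $M_1=\mathbb{Z}/2\mathbb{Z}$ carries the induced function $d(\overline{1})=1$ (Lemma \ref{a7Jun26}.(2));
\item $M_2=\mathbb{Z}$ carries $d(z)=|z|$.
\end{itemize}
Then $N=\mathbb{Z}(\overline{1},1)$, $N_{\geq 2}=\mathbb{Z}(0,2)$ and $\mP_s(N)=\{1,2\}$. Applying (ER0) to add $2a_1$ as a new row gives
\[
A\sim_s B=\begin{pmatrix}\overline{1} & 1\\ 0 & 2\end{pmatrix}.
\]
This $B$ is in strong reduced echelon form:
\begin{itemize}
\item $N=\mathbb{Z}b_1+\mathbb{Z}b_2$ and $N_{\geq 2}=\mathbb{Z}b_2$;
\item the pivots $\overline{1}$ and $2$ have least degree in ${\rm gr}(N)_1=\mathbb{Z}/2\mathbb{Z}$ and ${\rm gr}(N)_2=2\mathbb{Z}$, respectively;
\item $|1|<|2|$.
\end{itemize}
Yet $0\neq b_2=2b_1\in\mathbb{Z}b_1\cap \mathbb{Z}b_2$, so the sum is not direct.

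This is exactly your surviving tail: $\ann_{\mathbb{Z}}(b_{11})=2\mathbb{Z}\neq 0=\ann_{\mathbb{Z}}(b_{12})$. Neither of your proposed tools can rule it out:
\begin{itemize}
\item Minimality of pivot degrees is imposed column by column, so it places no constraint on the annihilators of the tail entries.
\item Lemma \ref{a13Jun26} concerns a single graded Euclidean module with its own degree function. It does not apply to the column filtration of $M$, which need not be Euclidean at all: $\mathbb{Z}/2\mathbb{Z}\oplus\mathbb{Z}$ is not cyclic.
\end{itemize}

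So instead of trying to prove directness, state the conclusion of statement 2 with $\sum_{\mu=\nu_i}^sRb_\mu$ in place of $\bigoplus$. The direct decomposition that does hold is the graded one, as in your fallback: ${\rm gr}(N_{\geq i})=\bigoplus_{\mu\geq\nu_i}{\rm gr}(N)_{j_\mu}$ with ${\rm gr}(N)_{j_\mu}=Rb_{\mu j_\mu}$.

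Genuine directness needs an extra hypothesis, such as $\ann_R(b_\mu)=\ann_R(b_{\mu j_\mu})$ for all $\mu$, as in Proposition \ref{REForm-N1}. Under that hypothesis your projection argument finishes at once. From $r_\nu b_{\nu j_\nu}=0$ you get $r_\nu\in \ann_R(b_\nu)$, hence $r_\nu b_\nu=0$, and you descend to $\nu+1$.
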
 

\begin{proof} 1. See the proof of Theorem \ref{STR-REForm}.

2. Clearly, for all $i=1, \ldots , n$, $N(A)_{\geq i}=N(A)_{\geq j_{\nu_i}}=\bigoplus_{\mu =\nu_i}^sRb_\mu$, where $\nu_i=\min\{ \nu \in [s]\, | \, i\leq j_\nu\}$, and  the statement follows from the definition of the strong reduced echelon form.
\end{proof}

Theorem \ref{26Jul26} is a criterion for two matrices in  strong reduced echelon form to be strongly equivalent. 

\begin{theorem}[{\bf The equivalence class of strong reduced echelon forms of $A$ depends only on the module $N(A)$}] \label{26Jul26}%\marginpar{26Jul26}
  Suppose that matrices $B \in M_{s,n}(M_1, \ldots ,M_n)$ and $B' \in M_{s',n}(M_1, \ldots ,M_n)$ are in strong reduced echelon form. Then  $B\sim_s B'$ iff $N(B) = N(B')$. 
\end{theorem}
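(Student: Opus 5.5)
The forward implication is immediate: each of (ER0), (ER1), (ER2) preserves the submodule generated by the rows, so $B\sim_s B'$ gives $N(B)=N(B')$. The work is in the converse, and the plan is to pass through a matrix generating the same module by \emph{all} rows of both matrices. Suppose $N:=N(B)=N(B')$. Since every row $b'_\mu$ of $B'$ lies in $N(B)$, we have $b'_\mu=\sum_\nu r_{\mu\nu}b_\nu$. Applying (ER0) repeatedly, once for each $\mu$, we append to $B$ the rows $b'_1,\ldots ,b'_{s'}$ (each added to a new zero row as a combination of the rows of $B$). This gives
\[
B\sim_s C:=\begin{pmatrix} B\\ B'\end{pmatrix}.
\]
Symmetrically, $B'\sim_s C'$, where $C'$ is the same stack up to the order of the two blocks. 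By (ER2), $C\sim_s C'$. Since $\sim_s$ is an equivalence relation, it follows that $B\sim_s B'$.

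The only point requiring care is that (ER0) produces a new row only as a combination of the existing rows, so we must check that the matrix convention allows it. The paper works in $M_{fin,n}(M_1,\ldots,M_n)$, where one may always assume extra zero rows, so appending each $b'_\mu$ is legitimate. This also reconciles the possibly different sizes $s$ and $s'$. I note, though, that by Corollary \ref{aSTR-REForm}.(2), both $s$ and $s'$ equal $|\mP_s(N)|$, because the strong pivot set depends only on $N$.

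If one wants a sharper statement, in which $B$ is transformed into $B'$ without intermediate stacking, I would try the following induction. Corollary \ref{aSTR-REForm}.(1) gives $Rb_{\nu j_\nu}=Rb'_{\nu j_\nu}={\rm gr}(N)_{j_\nu}$, and Corollary \ref{aSTR-REForm}.(2) gives $N_{\geq j_\nu}=\sum_{\mu\geq\nu}Rb_\mu=\sum_{\mu\geq\nu}Rb'_\mu$. One would then run a downward induction on $\nu$, using Lemma \ref{a26Jul26} to pass to the tails $(b_\nu,\ldots ,b_s)^t$. The main obstacle on that route is that $b'_\nu-ub_\nu$ need not be zero when the ring has zero divisors, and dealing with this is exactly where (ER0) is needed. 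Because of this, I would rely on the stacking argument above, which avoids the difficulty entirely.
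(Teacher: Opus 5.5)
Your argument is correct, and it takes a different and shorter route than the paper. The paper proves the converse by induction on $s$. The tails $(b_2,\ldots,b_s)^t$ and $(b_2',\ldots,b_s')^t$ are again in strong reduced echelon form and generate the same submodule $N_{\geq j_2}$, so by induction $B\sim_s (b_1,b_2',\ldots,b_s')^t$. The first row is then exchanged: append $\beta b_1$ by (ER0), correct it to $b_1'$ by (ER1), cancel $b_1$ by (ER1), and discard the resulting zero row. The case $s=1$ is the same trick with no tail.

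Your stacking argument performs this append-and-cancel trick for all rows at once. It never uses that $B$ and $B'$ are in strong reduced echelon form. It therefore shows directly that any two matrices generating the same submodule are strongly left-row equivalent, which proves Theorem~\ref{b26Jul26} without going through Theorem~\ref{STR-REForm}. It relies only on conventions the paper itself uses: working in $M_{fin,n}(M_1,\ldots,M_n)$, where zero rows may be added and discarded, and $\sim_s$ being an equivalence relation. The paper's induction additionally replaces rows level by level along the filtration $N_{\geq j_\nu}$, but the statement as formulated does not need this.

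The obstacle you anticipate for a row-by-row induction is handled in the paper by exactly your (ER0) device, applied to one row. Finally, $s=s'$ does not need Corollary~\ref{aSTR-REForm}. It is built into the definition of strong reduced echelon form, where the number of rows equals $|\mP_s(N)|$, and your argument does not use it anyway.
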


\begin{proof} $(\Rightarrow)$ Clearly, if $B\sim_s B'$ then  $N(B) = N(B')$.

$(\Leftarrow)$ Suppose that $N:=N(B) = N(B')$ and $\mP_s(N)=\{ j_1, \ldots , j_s\}$ where $j_1< \ldots < j_s$.  Then $s=s'$, $B=(b_1, \ldots , b_s)^t$ and $B'=(b_1', \ldots , b_s')^t$  where $b_1, \ldots , b_s$ and $b_1', \ldots , b_s'$ are the rows of the matrices $B$ and $B'$, respectively. We have to show that $B\sim_s B'$. 
To prove the result, we use induction on $s$. 

Suppose that $s=1$. Then $B=(b_1)=(0\ldots  0 \, b_{1j_1}\, \ldots \, b_{1n})$ and $B'=(b_1')=(0\ldots  0 \, b_{1j_1}'\, \ldots \, b_{1n}')$.  Notice that  ${\rm gr}(N)_{j_1}=R b_{1j_1}=R b_{1j_1}'$, and so   $b_1=\alpha b_1'$ and $b_1'=\beta b_1$ for some elements $\alpha, \beta \in R$. Notice that $b_1-\alpha b_1'\in {\rm gr}(N)_{\geq j_1+1}=\{ 0\}$, i.e. $b_1-\alpha b_1'=0$.  Now, 
$$B= (b_1)\sim_s
\begin{pmatrix}
b_1\\[4pt]
\beta b_1
\end{pmatrix}
=\begin{pmatrix}
b_1\\[4pt]
b_1'
\end{pmatrix}
\sim_s 
\begin{pmatrix}
b_1-\alpha b_1'\\[4pt]
b_1'
\end{pmatrix}
\sim_s 
\begin{pmatrix}
0\\[4pt]
b_1'
\end{pmatrix}
\sim_s 
\begin{pmatrix}
b_1'\\[4pt]
0
\end{pmatrix}
\sim_s(b_1')=B'.
$$
Suppose that $s\geq 2$. 
Let $B_1:=(b_2, \ldots , b_s)^t$ and $B_1'=(b_2', \ldots , b_s')^t$  where $b_2, \ldots , b_s$ and $b_2', \ldots , b_s'$ are the rows of the matrices $B_1$ and $B_1'$, respectively.  Since the matrices $B$ and $B'$ are in strong reduced echelon form,   the matrices $B_1$ and $B_1'$ are also in strong reduced echelon form. 
Since $N(B_1)=\sum_{\nu=2}^sRb_\nu=N_{\geq j_1}=\sum_{\nu=2}^sRb_\nu'=N(B_1')$ and $B_1,B_1'\in M_{s-1, n-j_1+1}(M_{j_1}\oplus\cdots \oplus M_n)$, we have that $B_1\sim_sB_1'$, by induction on $s$. Therefore, 
$$
B=(b_1,b_2\ldots , b_s)\sim_s A:=(b_1,b_2'\ldots , b_s').
$$
Thus, $N(B)=N(A)$. 
Since $\sum_{\nu=1}^sRb_\nu'=N(B')=N(B)=N(A)=Rb_1+\sum_{\nu=2}^sRb_\nu'$, we have the equalities  $b_1=\alpha b_1'+\sum_{i =2}^s\alpha_ib_i'$ and $b_1'=\beta b_1+\sum_{i =2}^s\beta_ib_i'$ for some elements $\alpha, \alpha_i, \beta, \beta_i\in R$. Now,
\begin{eqnarray*}
 A &=& (b_1,b_2'\ldots , b_s')^t\sim_s(b_1,b_2'\ldots , b_s', \beta b_1)^t\sim_s\bigg(b_1,b_2'\ldots , b_s',  b_1'-\sum_{i =2}^s\beta_ib_i'\bigg)^t\\
 &\sim_s & (b_1,b_2'\ldots , b_s',  b_1')^t\sim_s (b_1-\alpha b_1',b_2'\ldots , b_s',  b_1')^t \sim_s \bigg(\sum_{i =2}^s\alpha_i b_1',b_2'\ldots , b_s',  b_1'\bigg)^t\\
 &\sim_s & (0,b_2'\ldots , b_s',  b_1')^t\sim_s (b_1',b_2'\ldots , b_s')^t=B'.
\end{eqnarray*}
 Thus, $B\sim_s B'$.
\end{proof}

Theorem \ref{b26Jul26} describes the equivalence class of the matrix $A$.

\begin{theorem}[{\bf  The equivalence class of $A$ depends only on the module $N(A)$}] \label{b26Jul26}%\marginpar{b26Jul26}
 Suppose that  $A\in M_{m,n}(M_1, \ldots ,M_n)$ and  $A'\in M_{m',n}(M_1, \ldots ,M_n)$. Then $A\sim_s A'$ iff  $N(A) = N(A')$.
\end{theorem}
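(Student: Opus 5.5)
The forward implication is immediate. Each of the operations (ER0), (ER1) and (ER2) replaces a generating set of $N(A)$ by another generating set of the same submodule, so $A\sim_s A'$ implies $N(A)=N(A')$. The substance is the converse, and the plan is to reduce it to Theorem \ref{26Jul26} by way of Theorem \ref{STR-REForm}.

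Suppose that $N:=N(A)=N(A')$. If $N=0$, then all rows of $A$ and of $A'$ are zero. In that case I pass between them by deleting or adding zero rows. The operation (ER1) turns a row $a_i=\sum_{j\neq i}r_ja_j$ into $0$, and (ER0) with all $r_i=0$ creates a zero row; this is exactly the convention built into $M_{fin,n}(M_1,\ldots,M_n)$, where zero rows may be added or removed freely. So I treat $N=0$ separately in this trivial way.

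If $N\neq 0$, then $A\neq 0$ and $A'\neq 0$. By Theorem \ref{STR-REForm} there are matrices $B$ and $B'$ in strong reduced echelon form with $A\sim_s B$ and $A'\sim_s B'$. Since $\sim_s$ preserves the generated submodule, $N(B)=N(A)=N=N(A')=N(B')$. Theorem \ref{26Jul26} then gives $B\sim_s B'$. Using symmetry and transitivity of $\sim_s$, I conclude $A\sim_s B\sim_s B'\sim_s A'$.

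The one point needing care is that $\sim_s$ is an equivalence relation on matrices with possibly different numbers of rows. I will check symmetry explicitly. (ER1) and (ER2) are reversible by operations of the same type. An (ER0) step, which fills a zero row with $\sum r_ia_i$, is undone by an (ER1) step that subtracts $\sum r_ia_i$ from that row, leaving a zero row that may be discarded under the $M_{fin,n}$ convention. With this verified, the argument is just a concatenation of the two earlier theorems, and I expect no real obstacle.
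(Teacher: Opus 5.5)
Your proof is correct and follows the paper's argument: reduce both $A$ and $A'$ to strong reduced echelon form via Theorem \ref{STR-REForm}, apply Theorem \ref{26Jul26} to the two reduced matrices, and chain the equivalences. Two points you add are left implicit in the paper: the case $N(A)=0$, where Theorem \ref{STR-REForm} does not apply because it assumes $A\neq 0$, and the explicit check that an (ER0) step can be undone, which is what makes $\sim_s$ transitive.
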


\begin{proof} By Theorem \ref{STR-REForm}, $A\sim_sB$ and $A'\sim_sB'$ for some matrices $B$ and $B'$ that are in strong reduced echelon form. Then $A\sim_sA'$ iff $B\sim_sB'$ iff $N(B)=N(B')$ (Theorem \ref{26Jul26}) iff $N(A)=N(A')$
 (since $N(A)=N(B)$ and $N(A')=N(B')$).
\end{proof}
Next, in the case where  the  matrix $A$  consists of a single row,  we consider in detail its  strong reduced echelon form. We will see how the presence of zero divisors complicates the situation. \\ 

{\bf Strong reduced echelon form of the matrix $(a)=(a_{11}\,  a_{12}\, \ldots  \, a_{1n})\in M_{1,n}(M_1,\ldots , M_n)$.} Let us describe a strong reduced echelon form of the  matrix  
$$A=(a)=(a_{11}\,  a_{12}\, \ldots  \, a_{1n})\in M_{1,n}(M_1,\ldots , M_n).
$$
 Let $N:=N(A)=Ra$. Suppose that $\mP_s(N)=\{ j_1, \ldots , j_s\}$ where $j_1< \ldots < j_s$.  For each $\nu =1, \ldots , s$, let  $\ga_\nu :=\ann_R(a_{1j_\nu})$ and 
$$
\gb_\nu:=      
\begin{cases}
R& \text{if }\nu =1,\\
\ga_1\cap\cdots \cap \ga_{\nu-1}& \text{if }\nu=2,\ldots , s.\\
\end{cases}
$$
It  follows from the equality $\mP_s(N)=\{ j_1, \ldots , j_s\}$ that $\ga_\nu\subseteq \ann_R(a_{1i})$ for $i=j_\nu,\ldots, j_{\nu+1}-1$. Thus $\gb_\nu=\bigcap_{i=1}^{j_\nu-1}\ann_R(a_{1i})$ for $\nu=2,\ldots , s$. Therefore, 
$N_{\geq j_\nu}=\gb_\nu a$ for  $\nu =1, \ldots , s$. 

By  Proposition \ref{A28May26}, $p_{j_\nu}(N_{\geq j_\nu})=Rb_{\nu j_\nu}$ for some element $b_{\nu j_\nu}\in \CG(p_{j_\nu}(N_{\geq j_\nu}))$ where $p_{j_\nu}$ is the projection $M\ra M_{j_\nu}$. Therefore there  is an element $\alpha_\nu \in \gb_\nu$ such that $b_{\nu j_\nu}=\alpha_\nu a_{1j_\nu}$. Let $b_\nu := \alpha_\nu a$. Then $p_{j_\nu} (b_\nu)=\alpha_\nu a_{1j_\nu}=b_{\nu j_\nu}$. Thus $N=\sum_{\nu =1}^s Rb_\nu$ and $N_{\geq \nu}=\sum_{\mu =1}^\nu Rb_\mu$ for $\nu=1, \ldots , s$. Therefore,   
$$
A\sim_s(b_1, \ldots b_s)^t.
$$
Finally, $(b_1, \ldots b_s)^t\sim_s (c_1, \ldots c_s)^t$ where the matrix $(c_1, \ldots c_s)^t$ is in strong reduced echelon form which is obtained from the matrix $(b_1, \ldots b_s)^t$ in the obvious way. In more detail,  
 using the operation (ER1), for each $\nu =2, \ldots , s$, the degrees of the elements
 above the element $b_{\nu j_\nu}$ can be made smaller than the degree of  $b_{\nu j_\nu}$.
 
Proposition \ref{a30Jul26} provides criteria ensuring  that  the $R$-modules $N(A)$ and $M/N(A)$ have finite length and it gives explicit expressions for these lengths.

\begin{proposition} \label{a30Jul26}%\marginpar{a30Jul26}

 Let $M=\bigoplus_{i=1}^nM_i$ be a direct sum of l-finite Euclidean $R$-modules $M_i$, $0\neq A \in M_{m,n}(M_1, \ldots ,M_n)$, $\mP_s(N(A))=\{ j_1, \ldots , j_s\}$  where $j_1< \cdots < j_s$ and $A\sim_sB$ where the matrix $B=(b_1, \ldots , b_s)^t$ is in strong reduced echelon form. Then:
 \begin{enumerate}

\item  $l_R(N(A))<\infty$ iff $l_R(M_j)<\infty$ for all  $j\in \mP_s(N(A))$. If $l_R(N(A))<\infty$ then $l_R(N(A))=\sum_{\nu=1}^s l_R(Rb_{\nu j_\nu})$ where 
where $b_\nu =(0, \ldots , 0, b_{\nu j_\nu}, \ldots , b_{\nu n})$, $\nu =1,\ldots  , s$,  are the rows of the matrices $B$.

\item $l_R(M/N(A))<\infty$ iff $l_R(M_j)<\infty$ for all  $j\in [n]\backslash \mP_s(N(A))$. If $l_R(M/N(A))<\infty$ then $l_R(M/N(A))=\sum_{\nu=1}^s l_R(M_{j_\nu}/Rb_{\nu j_\nu})+\sum_{j\in [n]\backslash \mP_s(N(A)} l_R(M_j)$.

\end{enumerate}
 
\end{proposition}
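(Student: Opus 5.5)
The plan is to filter both $N:=N(A)$ and $\bM:=M/N$ by the induced filtrations, and then reduce both lengths to the graded pieces. Since $l_R$ is additive along finite filtrations and each filtration here has $n$ steps, we have
\[
l_R(N)=\sum_{i=1}^n l_R({\rm gr}(N)_i) \quad\text{and}\quad l_R(M/N)=\sum_{i=1}^n l_R({\rm gr}(\bM)_i),
\]
with the convention that either side is infinite iff some summand is. By the short exact sequence (\ref{ses-grN}) and the computation preceding it, ${\rm gr}(\bM)_i\simeq M_i/{\rm gr}(N)_i$. Furthermore ${\rm gr}(N)_i=0$ for $i\notin \mP_s(N)$, and ${\rm gr}(N)_{j_\nu}=Rb_{\nu j_\nu}\neq 0$ by Corollary \ref{aSTR-REForm}.(1) (equivalently, by the definition of the strong reduced echelon form).

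For statement 1, I would apply the displayed formula to get $l_R(N)=\sum_{\nu=1}^s l_R(Rb_{\nu j_\nu})$, which is the claimed expression whenever it is finite. It remains to prove the criterion. Fix $j=j_\nu$. The $R$-module $Rb_{\nu j_\nu}$ is a nonzero submodule of the l-finite Euclidean module $M_j$. By Theorem \ref{14Jun26}.(2) it is co-finite, so $l_R(M_j/Rb_{\nu j_\nu})<\infty$. Hence $l_R(Rb_{\nu j_\nu})<\infty$ iff $l_R(M_j)<\infty$, using additivity of length on $0\to Rb_{\nu j_\nu}\to M_j\to M_j/Rb_{\nu j_\nu}\to 0$. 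Summing over $\nu$ gives: $l_R(N)<\infty$ iff $l_R(M_j)<\infty$ for all $j\in\mP_s(N)$.

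For statement 2, I would split the sum over $i$ into $i\in\mP_s(N)$ and $i\notin\mP_s(N)$. For $i=j_\nu$ we get $l_R({\rm gr}(\bM)_{j_\nu})=l_R(M_{j_\nu}/Rb_{\nu j_\nu})$, which is always finite by Theorem \ref{14Jun26}.(2). For $i\notin \mP_s(N)$ we get ${\rm gr}(\bM)_i\simeq M_i$. Therefore $l_R(M/N)<\infty$ iff $l_R(M_j)<\infty$ for all $j\in[n]\backslash\mP_s(N)$, and in that case the sum yields exactly the stated formula.

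I do not expect a real obstacle. The only points needing care are checking that the isomorphism ${\rm gr}(\bM)_i\simeq M_i/{\rm gr}(N)_i$ from the displayed computation is valid as stated, and justifying that length is additive over the finite filtrations with the infinite case handled correctly. The latter holds because a module has finite length iff every factor of a finite filtration has finite length.
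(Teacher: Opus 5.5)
Your proposal is correct and follows essentially the same route as the paper: both pass to the associated graded modules of the induced filtrations, identify ${\rm gr}(N)_{j_\nu}$ with $Rb_{\nu j_\nu}$ and ${\rm gr}(M/N)_i$ with $M_i/{\rm gr}(N)_i$ via (\ref{ses-grN}), and then sum the lengths. The differences are cosmetic. The paper recomputes ${\rm gr}(N)_{j_\nu}\simeq R/\ann_R(b_{\nu j_\nu})$ explicitly rather than reading it off the definition. It also derives the criterion in statement 1 from Lemma \ref{b14Jun26}.(4), whereas you derive it directly from the co-finiteness in Theorem \ref{14Jun26}.(2), which is what that lemma rests on anyway.
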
 

\begin{proof} 1. Since $A\sim_s B$, $N:=N(A)=N(B)$. For each $\nu =1,\ldots , s$, $N_{\geq j_\nu}=\sum_{\mu=\nu}^sRb_\mu$ and 
\begin{eqnarray*}
{\rm gr}(N)_{\geq j_\nu} &=& N_{\geq j_\nu}/N_{\geq j_\nu +1}=N_{\geq j_\nu}/N_{\geq j_{\nu +1}}=\bigg(\sum_{\mu=\nu}^sRb_\mu \bigg)/\bigg(\sum_{\mu=\nu+1}^sRb_\mu \bigg)\\
 &=& Rb_\nu/Rb_\nu\cap\bigg(\sum_{\mu=\nu+1}^sRb_\mu \bigg)=Rb_\nu/Rb_\nu\cap N_{\geq j_\nu +1}\simeq R/\{r\in R\, | \, rb_\nu\in N_{\geq j_\nu +1} \}\\
 &=& R/\{r\in R\, | \, rb_\nu\in M_{\geq j_\nu +1} \}= R/\{r\in R\, | \, rb_{\nu j_\nu}=0 \}= R/\ann_R(b_{\nu j_\nu})\simeq Rb_{\nu j_\nu}. 
\end{eqnarray*}
Therefore, 
$$
l_R(N(A))=l_R(N(B))=l_R({\rm gr}(N(B)))=
\sum_{\nu=1}^s l_R({\rm gr}(N(B))_{j_\nu})=\sum_{\nu=1}^s l_R(Rb_{\nu j_\nu}),
$$
and statement 1  follows from the fact that every  nonzero submodule of a Euclidean module of infinite length has also infinite length (Lemma \ref{b14Jun26}(4)).

2. Since $A\sim_sB$, $N(A)=N(B)$. By the graded short exact sequence (\ref{ses-grN}), 
\begin{eqnarray*}
l_R(M/N(A))&=&l_R(M/N(B))=l_R\Big({\rm gr}(M/N(B))\Big)=\sum_{\nu=1}^s l_R(M_{j_\nu}/{\rm gr}(N(B))_{j_\nu})\\
&+&\sum_{j\in [n]\backslash \mP_s(N(A))} l_R(M_j)=\sum_{\nu=1}^s l_R(M_{j_\nu}/Rb_{\nu j_\nu})+\sum_{j\in [n]\backslash \mP_s(N(A))} l_R(M_j),   
\end{eqnarray*}
and statement 2  follows  from the fact that every  proper factor module of a Euclidean module has   finite length (Theorem  \ref{14Jun26}(2)). 
\end{proof}
The {\bf uniform dimension} of an $R$-module $M$, denoted $\udim (M)$, is  the supremum of the integers $n$ such that there is a direct sum $\oplus_{i=1}^n U_i\subseteq  M$ of uniform submodules $U_i$ of $M$. If no such finite $n$ exists, we say that the module $M$  has {\em infinite} uniform dimension. For the direct sum $M=\bigoplus_{i=1}^nM_i$ of Euclidean $R$-modules $M_i$ and a subset $I$ of $[n]$, let $M(I):=\bigoplus_{i\in I}M_i$ and $CI:=[n]\backslash I$ be the {\em complement} of the set $I$ in the set $[n]$.  

Proposition \ref{REForm-N1} presents a sufficient condition  for a reduced echelon form of the matrix $A$ to be a strong reduced echelon form and it also computes the  uniform dimensions of the $R$-module $N(A)$ and $N(A)+M(C\mP_s(N(A))$.

\begin{proposition} \label{REForm-N1}%\marginpar{REForm-N1}

Let $M=\bigoplus_{i=1}^nM_i$ be a direct sum of Euclidean $R$-modules $M_i$,
$A =(a_{ij})\in M_{m,n}(M_1, \ldots ,M_n)$,  $N(A):=\sum_{i=1}^mR a_i$ be the  $R$-submodule of $M$ which is generated by the rows $a_i:=(a_{i1}, \ldots , a_{in})$ of the matrix $A$ and $B=(b_{ij})\in M_{m,n}(M_1, \ldots ,M_n)$ be a reduced echelon form of the matrix $A$ whose pivots lie in the columns with indices $j_1<\ldots <j_s$. Suppose that $b_1, \ldots , b_s$ are the rows of the matrix $B$ such that $\ann_R(b_i)= \ann_R(b_{ij_i})$ for all $i=1, \ldots s$. Then:

\begin{enumerate}

\item The matrix $B$ is in strong reduced echelon form and $\mP_s(N(A))=J:=\{ j_1, \ldots , j_s\}$.

\item  $N(A)=\sum_{i=1}^sRb_i=\bigoplus_{i=1}^s Rb_i$ and      $N(A)\cap M(CJ)=0$, i.e. $N(A)+M(CJ)=N(A)\oplus M(CJ)=\bigoplus_{i=1}^s Rb_i \oplus  M(CJ)\subseteq M$.

\item Suppose, in addition,  that the $R$-modules $M_i$ are also uniform (this is the case if all of them are l-finite and of  infinite length, Lemma \ref{b14Jun26}.(1)). Then the $R$-module  $N(A)\oplus M(CJ)$ is an essential submodule of $M$ and $\udim (N(A))=|J|=s$. 

\end{enumerate}
\end{proposition}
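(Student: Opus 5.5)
The plan is to first turn the hypothesis $\ann_R(b_i)=\ann_R(b_{ij_i})$ into a statement about sums of rows, and then read off all three claims from it. Here $b_1,\dots,b_s$ are the nonzero rows of $B$ (the remaining rows are zero). Since $B$ is in reduced echelon form, $b_i=(0,\ldots,0,b_{ij_i},\ldots,b_{in})$ with $j_1<\cdots<j_s$. Also $N(A)=N(B)=\sum_{i=1}^sRb_i$, because $A\sim B$ and (ER1), (ER2) preserve the generated submodule.

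\textbf{Key claim.} If $\sum_{i=1}^s r_ib_i\in M_{\geq k}$ for some $k$, then $r_ib_i=0$ for every $i$ with $j_i<k$.

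To prove this, let $i_0$ be the smallest index with $r_{i_0}b_{i_0}\neq 0$, and suppose $j_{i_0}<k$. Look at coordinate $j_{i_0}$. Rows $b_i$ with $i>i_0$ vanish there, because their pivots lie strictly to the right. Rows with $i<i_0$ contribute $r_ib_i=0$. So that coordinate equals $r_{i_0}b_{i_0j_{i_0}}$. If this were $0$, the hypothesis $\ann_R(b_{i_0})=\ann_R(b_{i_0j_{i_0}})$ would give $r_{i_0}b_{i_0}=0$, a contradiction. Hence the sum has a nonzero entry in column $j_{i_0}<k$, so it does not lie in $M_{\geq k}$. This is the one place the annihilator hypothesis enters, and I expect it to be the main point. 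Everything else is bookkeeping.

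\textbf{Statement 2.} Take $k=n+1$ in the key claim: $\sum r_ib_i=0$ forces every $r_ib_i=0$, so the sum $\sum Rb_i$ is direct. For the intersection, suppose $x=\sum r_ib_i\in M(CJ)$. If $x\neq 0$, take the first $i_0$ with $r_{i_0}b_{i_0}\neq0$. As above, the entry of $x$ in column $j_{i_0}\in J$ is $r_{i_0}b_{i_0j_{i_0}}\neq 0$, which contradicts $x\in M(CJ)$. So $N(A)\cap M(CJ)=0$.

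\textbf{Statement 1.} Applying the key claim with $k=j_\nu$ and with $k=j_\nu+1$ gives
\[
N_{\geq j_\nu}=\sum_{\mu\ge\nu}Rb_\mu \quad\text{and}\quad N_{\geq j_\nu+1}=\sum_{\mu>\nu}Rb_\mu ,
\]
using $\sum_{\mu\geq \nu}Rb_\mu\subseteq M_{\geq j_\nu}$. Consequently ${\rm gr}(N)_{j_\nu}\cong Rb_\nu\cong Rb_{\nu j_\nu}\neq0$, where the second isomorphism again uses the annihilator hypothesis. Under projection, ${\rm gr}(N)_{j_\nu}$ identifies with $Rb_{\nu j_\nu}\subseteq M_{j_\nu}$. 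For $k\notin J$ the same computation gives $N_{\geq k}=N_{\geq k+1}$. Therefore $\mP_s(N(A))=J$.

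It remains to check the degree condition in the definition of strong reduced echelon form. The condition that the entries $a_{1j_\nu},\ldots,a_{\nu-1,j_\nu}$ above the pivot have smaller degree is inherited from the reduced echelon form. The pivot $b_{\nu j_\nu}$ must also have least degree in $Rb_{\nu j_\nu}$. If it does not, I would replace $b_\nu$ by $\alpha b_\nu$ with $\alpha b_{\nu j_\nu}\in\CG(Rb_{\nu j_\nu})$. This is allowed because $Rb_\nu=R\alpha b_\nu$: the map $b_\nu\mapsto b_{\nu j_\nu}$ is an isomorphism $Rb_\nu\cong Rb_{\nu j_\nu}$ by the annihilator hypothesis, so a generator of $Rb_{\nu j_\nu}$ lifts to a generator of $Rb_\nu$. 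The annihilator equality is preserved under this replacement. This adjustment should be noted, or argued to be unnecessary under the paper's conventions.

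\textbf{Statement 3.} Each $Rb_i\cong Rb_{ij_i}$ is a nonzero submodule of the uniform module $M_{j_i}$, hence uniform. So $N(A)=\bigoplus_{i=1}^s Rb_i$ has $\udim=s$. For essentiality, I would show that $L:=\bigoplus_i Rb_i\oplus M(CJ)$ is essential in $M$ by induction along the filtration $M_{\geq k}$. Given $0\neq x\in M$, let $j$ be its first nonzero column. If $j\notin J$, then $x$ differs from an element of $M(CJ)$ only in the columns of $J$; work upward from the last column. The cleaner route is to note that $L\cap M_j$ is essential in $M_j$ for every $j$: for $j\notin J$ it contains $M_j$ itself, and for $j=j_\nu$ we use $Rb_{\nu j_\nu}\subseteq M_{j_\nu}$ together with uniformity of $M_{j_\nu}$. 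One then multiplies $x$ by suitable ring elements, column by column, using the uniformity of each $M_j$, to land a nonzero multiple of $x$ in $L$. The technical step here is to control the lower coordinates while doing so, and that is the part of this statement I expect to need care.
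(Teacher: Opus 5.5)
\textbf{Parts 1 and 2.} These follow the paper's argument. You take the first nonzero summand $r_{i_0}b_{i_0}$, note that it alone contributes to column $j_{i_0}$, and use $\ann_R(b_{i_0})=\ann_R(b_{i_0j_{i_0}})$ to make it vanish. Stating this as one claim for all $k$ is tidy, and it also gives $\mP_s(N(A))=J$, which the paper leaves implicit.

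Your worry about the least-degree condition is justified. The paper's proof never checks it, and it can fail. For example, take $R=K[x]$, $M_1=K[x]/(x^2)$ with the induced Euclidean function, and $B=A=(\overline{1+x})$. Then $R\overline{1+x}=M_1$ contains $\overline{1}$, which has smaller degree than the pivot. However, after you replace $b_\nu$ by $\alpha b_\nu$, the above-pivot condition is no longer inherited. The entries $b_{\mu j_\nu}$ with $\mu<\nu$ were only bounded by the old, larger pivot degree, so the (ER1) reductions must be redone. What you actually prove is that $B$ is $\sim_s$-equivalent to a strong reduced echelon form, not that $B$ is one.

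\textbf{The gap: essentiality in part 3.} You leave this step open, and the justification you give for the pivot columns is not valid. Placed in column $j_\nu$, $Rb_{\nu j_\nu}$ is not contained in $L=\bigoplus_\nu Rb_\nu\oplus M(CJ)$ once $b_\nu$ has nonzero entries in later pivot columns. For example, with $R=M_1=M_2=\Z$, $b_1=(1,1)$, $b_2=(0,2)$, we have $(1,0)\notin L$.

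The missing idea is a uniform-dimension count, which is what the paper uses. You already know that each $Rb_\nu\simeq Rb_{\nu j_\nu}$ is nonzero and uniform, and each $M_j$ with $j\in CJ$ is uniform. So $L$ is a direct sum of $s+(n-s)=n$ nonzero uniform modules, and $\udim(L)=n=\udim(M)$. If some $X\neq 0$ satisfied $L\cap X=0$, then $L\oplus X\subseteq M$ would force $\udim(M)\geq n+1$. Hence $L$ is essential, with no control of coordinates needed.

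Your route can also be completed, though none of the following appears in the proposal:
\begin{itemize}
\item Consider any $x\in\sum_{\mu=\nu}^{\nu+k}Rb_\mu$ with zero entries in columns $j_{\nu+1},\ldots,j_{\nu+k}$. The first-nonzero-summand argument, which uses only pivot coordinates, gives $\ann_R(x)=\ann_R(x_{j_\nu})$.
\item So a multiplier chosen by uniformity of $M_{j_{\nu+k+1}}$ that keeps the next pivot entry nonzero also keeps the $j_\nu$-entry nonzero. Iterating gives $L\cap M_{j}\neq 0$ for every $j$.
\item A finite direct sum of essential submodules is essential, which finishes the argument.
\end{itemize}
The paper's count makes all of this unnecessary.

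Your direct computation $\udim(N(A))=s$, from $N(A)=\bigoplus Rb_\nu$ with each summand uniform, is fine. It is slightly more direct than the paper's $\udim(M)-\udim(M(CJ))$.
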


\begin{proof} 1. Let $N=N(A)$. Since $A\sim B$, $N=N(B)=\sum_{\nu=1}^sRb_\nu$.  
 To prove  statement 1,  it remains to show that $N_{\geq \nu}=L_\nu :=\sum_{\mu =\nu}^sRb_\mu$ for all $\nu =1,\ldots , s$. Clearly, $N_{\geq \nu}\supseteq L_\nu$ for all $\nu =1,\ldots , s$.  Suppose that $N_{\geq \nu}\neq  L_\nu$ 
 for some $\nu$. We seek a contradiction. Then there is an element $u=u_\mu b_\mu+\cdots +u_sb_s\in N_{\geq j_\nu}$ where $u_\mu,\ldots , u_s\in R$, $\mu<\nu$ and $u_\mu b_\mu\neq 0$. Since 
$\mu<\nu$  and  $u\in N_{\geq j_\nu}$, we must have $u_\mu \in \ann_R(b_{\mu j_\mu})$. By the assumption  $\ann_R(b_mu)= \ann_R(b_{\mu j_\mu})$.   Therefore, $u_\mu b_\mu =0$, a contradiction.

2. The matrices $A$ and $B$ are left-row equivalent. Therefore,  
$
N(A)=\sum_{i=1}^mRa_i=\sum_{i=1}^s Rb_i.
$ 
Now, the equalities  $\ann_R(b_i)= \ann_R(b_{ij_i})$ for all $i=1, \ldots s$ imply  the equalities
$$
\sum_{i=1}^sRb_i=\bigoplus_{i=1}^s Rb_i \;\; {\rm and}\;\;N(A)\cap M(CJ)=0.
 $$
In more detail, suppose that $x:=\sum_{i=1}^sr_ib_i=0$ for some elements $r_i\in R$ such that not all summands are equal to zero. By statement 1,   $\mP_s(N(A))=\{ j_1, \ldots , j_s\}$.  So, let $r_\nu b_\nu$ be the first nonzero summand, i.e.  $x=\sum_{i=\nu}^sr_ib_i=0$, $r_\nu \not \in \ann_R(b_{\nu)}$ and $\pi_\nu :M\ra M_\nu$ be the projection onto the direct summand $M_\nu$ of $M$. Then,   $0=\pi_\nu (x)=r_\nu b_{\nu, j_\nu}$, and so $r_\nu \in \ann_R(b_{\nu, j_\nu)}=\ann_R(b_{\nu})$, a contradiction.  Therefore,   $\sum_{i=1}^sRb_i=\bigoplus_{i=1}^s Rb_i$.

Suppose that $0\neq y:=\sum_{i=1}^sy_ib_i\in N(A)\cap M(CJ)$ for some elements $y_i\in R$ such that not all summands are equal to zero. Let $r_\mu b_\mu$ be the first nonzero summand, i.e.  $y=\sum_{i=\mu}^sy_ib_i$ and $r_\mu \not \in \ann_R(b_{\mu)}$, and $\pi_\mu :M\ra M_\mu$ be the projection onto the direct summand $M_\mu$ of $M$. Then,   $0=\pi_\mu (y)=r_\mu b_{\mu, j_\nu}$ (since $y\in M(CJ)$ and $\mu \not\in CJ$), and so $r_\mu \in \ann_R(b_{\mu, j_\mu)}=\ann_R(b_{\mu)}$. Thus, $r_\mu b_\mu=0$, a contradiction.  Therefore,   $N(A)\cap M(CJ)=0$.
 
Clearly, $N(A)\oplus M(CJ)=\bigoplus_{i=1}^s Rb_i \oplus  M(CJ)\subseteq M$.

3.  By the assumption, the $R$-modules $M_i$ are  uniform and $M=\bigoplus_{i=1}^sM_i$. Therefore, $\udim (M)=n$.  Since the $R$-module  $N(A)\oplus M(CJ)= \bigoplus_{\nu =1}^s Rb_{\nu} \oplus  M(CJ)$ contains exactly $n=\udim (M)$ nonzero summands, its uniform dimension is also $n$. Thus, it  is an essential submodule of $M$ and every summands of it is a uniform submodule of $M$. Hence,  $\udim (N(A))=\udim (M)-\udim(M(CJ))=n-|CJ|=s$. 
\end{proof}

\begin{corollary} \label{REForm-N2}%\marginpar{REForm-N2}

Let $M=\bigoplus_{i=1}^nM_i$ be a direct sum of l-finite Euclidean $R$-modules $M_i$ of infinite length,
$A =(a_{ij})\in M_{m,n}(M_1, \ldots ,M_n)$,  $N(A):=\sum_{i=1}^mR a_i$ be the  $R$-submodule of $M$ which is generated by the rows $a_i:=(a_{i1}, \ldots , a_{in})$ of the matrix $A$ and $B=(b_{ij})\in M_{m,n}(M_1, \ldots ,M_n)$ be a reduced echelon form of the matrix $A$ whose pivots lie in the columns with indices $j_1<\ldots <j_s$. Suppose that $b_1, \ldots , b_s$ are the rows of the matrix $B$ such that $\ann_R(b_i)= \ann_R(b_{ij_i})$ for all $i=1, \ldots s$. Then:

\begin{enumerate}

\item  $\udim (N(A))=s$. 

\item $l_R(M/N(A))<\infty$ iff  $s=n$.

\item If $l_R(M/N(A))<\infty$ then $l_R(M/N(A))=\sum_{\nu =1}^nl_R(M_{j_\nu}/Rb_{\nu j_\nu})$.

\end{enumerate}
\end{corollary}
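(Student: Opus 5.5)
The plan is to reduce all three statements to Proposition \ref{REForm-N1} and Proposition \ref{a30Jul26}, using the facts about l-finite Euclidean modules of infinite length in Lemma \ref{b14Jun26}.

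For statement 1, by Lemma \ref{b14Jun26}.(1) each $M_i$ is uniform, because it is l-finite Euclidean of infinite length. So the extra hypothesis of Proposition \ref{REForm-N1}.(3) holds. Proposition \ref{REForm-N1}.(3) then gives $\udim (N(A))=|J|=s$ directly, where $J=\{j_1,\ldots ,j_s\}$.

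For statement 2, Proposition \ref{REForm-N1}.(1) shows that $B$ is in strong reduced echelon form and $\mP_s(N(A))=J$. Hence $A\sim B\sim_s B$ (ignoring zero rows), so Proposition \ref{a30Jul26}.(2) applies with this $B$. It says that $l_R(M/N(A))<\infty$ iff $l_R(M_j)<\infty$ for all $j\in [n]\backslash J$. Every $M_j$ has infinite length, so this condition holds iff $[n]\backslash J=\emptyset$. Since $J\subseteq [n]$ and $|J|=s$, that is iff $s=n$.

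For statement 3, suppose $l_R(M/N(A))<\infty$. Then $s=n$ by statement 2, so $J=[n]$ and $j_\nu=\nu$. The length formula of Proposition \ref{a30Jul26}.(2) then gives $l_R(M/N(A))=\sum_{\nu=1}^n l_R(M_{j_\nu}/Rb_{\nu j_\nu})$, because the second sum is empty. Each term is finite by Theorem \ref{14Jun26}.(2), as $b_{\nu j_\nu}\neq 0$.

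The only point needing care is statement 2. One must check that the reduced echelon form $B$, once its zero rows are removed, is a legitimate strong reduced echelon form to which Proposition \ref{a30Jul26} applies. This is exactly what Proposition \ref{REForm-N1}.(1) supplies. Zero rows can be discarded using (ER1) and (ER0), so $A\sim_s (b_1,\ldots ,b_s)^t$.
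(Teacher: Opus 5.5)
Your proposal is correct and follows essentially the paper's route: the paper's proof just notes that each $M_i$ is uniform by Lemma \ref{b14Jun26}.(1) and then appeals to Proposition \ref{REForm-N1}. Your version is more explicit about statements 2 and 3. You first use Proposition \ref{REForm-N1}.(1) to see that $B$, with its zero rows discarded, is in strong reduced echelon form with $\mP_s(N(A))=J$, and then apply the length criterion and formula of Proposition \ref{a30Jul26}.(2); the paper's one-line proof leaves that second step implicit.
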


\begin{proof} Since every Euclidean module of  infinite length is a uniform module (Lemma \ref{b14Jun26}.(1),  the corollary follows from Proposition \ref{REForm-N1}.
\end{proof}

{\bf Left-row-column equivalence $\approx $.} 
Let $E$ be a Euclidean $R$-module  and   $0\neq A=(a_{ij}) \in M_{m,n}(E, \ldots ,E)$.
 On the columns $c_1, \ldots , c_n$ of the matrix $A$, we can do two types of operations -- {\em elementary column  operations}:
\begin{eqnarray*}
 &{\rm (EC1)}& c_i\mapsto c_i+\sum_{j\neq i}r_jc_j\;\; {\rm where}\;\; r_j\in R,  \\
 &{\rm (EC2)}& c_i\mapsto c_j, \;\; c_j\mapsto c_i.
\end{eqnarray*}
For a fixed column, (EC1) adds a linear combination of the other columns to it, and (EC2) swaps two columns. 
 These operations are reversible.

Two matrices $A, A'\in  M_{m,n}(E,\ldots , E)$ are called {\bf left-column equivalent}, $A\sim_c A'$, if one of them can be obtained from the other by finitely many elementary column operations (EC1) and (EC2). The relation $\sim_c$ is an equivalence relation.  Two matrices $A, A'\in   M_{m,n}(E,\ldots , E)$ are called {\bf left-row-column equivalent}, $A\approx A'$, if one of them can be obtained from the other by finitely many elementary row and column operations: (ER1), (ER2), (EC1) and (EC2). The relation $\approx$ is an equivalence relation. For a matrix $A=(a_{ij})\in   M_{m,n}(E,\ldots , E)$, let $\gcd (A):=\gcd \{ a_{ij}\, | \, i=1,\ldots, m; j=1,\ldots , n\}$. Then $\CN_R(A):=\sum_{i=1}^m\sum_{j=1}^nRa_{ij}=R\gcd (A)$. 
Clearly, if $A\approx A'$ then $R\gcd (A)=R\gcd (A')$.

%\begin{theorem}\label{REForm-N3}%\marginpar{REForm-N3}

%Let $(E,d_E)$ be a  Euclidean $R$-module  and   $0\neq A \in M_{m,n}(E, \ldots ,E)$ where $(m,n)\neq (1,1)$. Then $A\approx B$ where
%\[
%B
%=
%\begin{pmatrix}
%b_{11} & 0      & \cdots & 0      & 0      & \cdots & 0 \\
%0      & b_{22} & \cdots & 0      & 0      & \cdots & 0 \\
%\vdots & \vdots & \ddots & \vdots & \vdots &        & \vdots \\
%0      & 0      & \cdots & b_{ss} & 0      & \cdots & 0 \\[4pt]
%\hline
%0      & 0      & \cdots & 0      & 0      & \cdots & 0 \\
%\vdots & \vdots &        & \vdots & \vdots %&        & \vdots \\
%0      & 0      & \cdots & 0      & 0      & \cdots & 0
%\end{pmatrix}_{m\times n}
%\]
%where  $Rb_{11}\supset Rb_{22}\supset \cdots \supset Rb_{ss}\neq \{0\}$, $d_E(b_{11})<d_E(b_{22})< \cdots < d_E(b_{ss})$ and $b_{11}=\gcd (A)$. 
%\end{theorem}

\begin{proof}[{\bf Proof of Theorem \ref{REForm-3}}] To prove the theorem, we use induction on $n=1$. Suppose that $n=1$, i.e. $A=(a_{11}\, \ldots \, a_{m1})^t$ where $m\geq 2$ (since $(m,n)\neq (1,1)$). Then
$A\sim (\gcd (a_{11}, \ldots , a_{m1}) \,0 \,\ldots \, 0)^t$, by Lemma \ref{EuclDivAlg}. 

Suppose that $n>1$  and the statement is true for all $n'<n$. The case $m=1$ can be done in the same way as the case $n=1$ above. So, we assume that $m\geq 2$. Then repeating the same argument successively to the first row and the first column, we obtain the chain 
\[
A\approx 
A_1=\begin{pmatrix}
b_1 & * \\
0 &*
\end{pmatrix}\approx 
A_2=\begin{pmatrix}
b_2 & 0 \\
* &*
\end{pmatrix}\approx 
A_3=\begin{pmatrix}
b_3 & * \\
0 &*
\end{pmatrix}\approx 
A_4=\begin{pmatrix}
b_4 & 0 \\
* &*
\end{pmatrix}\approx \cdots
\]
where $Rb_1\subset Rb_2\subset Rb_3\subset \cdots$, $d_E(b_1)> d_E(b_2)> d_E(b_3)> \cdots$, the element $b_1$ is a greatest common divisor of the first column of the matrix $A$, the element $b_2$ is a greatest common divisor of the first row of the matrix $A_1$, the element $b_3$ is a greatest common divisor of the first column of the matrix $A_2$, the element $b_4$ is a greatest common divisor of the first row of the matrix $A_3$, etc. Since the $R$-module $E$ is a Euclidean module, it is a Noetherian module, and so the ascending chain of submodules of $E$, $Rb_1\subset Rb_2\subset Rb_3\subset \cdots$, stabilizers, say, on $p$'th step,
$$
A\approx \begin{pmatrix}
b_p & 0 \\
0 & C
\end{pmatrix}.
$$
Notice that $Rb_p\supseteq \CN_R(C)$. 
By induction, $C\approx B'$ where 
\[
B'
=
\begin{pmatrix}
b_{22}' & 0      & \cdots & 0      & 0      & \cdots & 0 \\
0      & b_{33}' & \cdots & 0      & 0      & \cdots & 0 \\
\vdots & \vdots & \ddots & \vdots & \vdots &        & \vdots \\
0      & 0      & \cdots & b_{tt}' & 0      & \cdots & 0 \\[4pt]
%\hline
0      & 0      & \cdots & 0      & 0      & \cdots & 0 \\
\vdots & \vdots &        & \vdots & \vdots &        & \vdots \\
0      & 0      & \cdots & 0      & 0      & \cdots & 0
\end{pmatrix}_{m-1\times n-1},
\]
$Rb_{22}'\supset Rb_{33}'\supset \cdots \supset Rb_{tt}'\neq \{0\}$,  $d_E(b_{22}')<d_E(b_{33}') <\cdots < d_E(b_{tt}')$, $b_{22}'=\gcd (C)$ and $Rb_{22}'=R\gcd (C)$. Notice that if $A\approx A'$ then $R\gcd (A)=R\gcd (A')$. Thus, $R\gcd (A) = R\gcd (b_p,\gcd (C))=R\gcd (b_p,\gcd (B'))=R\gcd (b_p,b_{22}')$. Let $b_{11}=\gcd (b_p,b_{22}')$. Then $Rb_{11}=R\gcd (A)$ and 
 $$
 \begin{pmatrix}
b_p & 0 \\
0 & b_{22}'
\end{pmatrix}\approx
\begin{pmatrix}
b_p & b_{11} \\
0 & b_{22}'
\end{pmatrix}\approx 
 \begin{pmatrix}
b_{11} & b_p \\
b_{22}' & 0
\end{pmatrix}\approx 
\begin{pmatrix}
b_{11} &0\\
b_{22}' & rb_{22}'
\end{pmatrix}\approx 
\begin{pmatrix}
b_{11} &0\\
0 & rb_{22}'
\end{pmatrix}
 $$ 
 for some element $r\in R$.
Therefore, 
$$
A\approx 
\begin{pmatrix}
b_{11} & 0      & \cdots & 0      & 0      & \cdots & 0 \\
0      & rb_{22}' & \cdots & 0      & 0      & \cdots & 0 \\
\vdots & \vdots & \ddots & \vdots & \vdots &        & \vdots \\
0      & 0      & \cdots & b_{tt}' & 0      & \cdots & 0 \\[4pt]
%\hline
0      & 0      & \cdots & 0      & 0      & \cdots & 0 \\
\vdots & \vdots &        & \vdots & \vdots &        & \vdots \\
0      & 0      & \cdots & 0      & 0      & \cdots & 0
\end{pmatrix}
\approx
B
=
\begin{pmatrix}
b_{11} & 0      & \cdots & 0      & 0      & \cdots & 0 \\
0      & b_{22} & \cdots & 0      & 0      & \cdots & 0 \\
\vdots & \vdots & \ddots & \vdots & \vdots &        & \vdots \\
0      & 0      & \cdots & b_{ss} & 0      & \cdots & 0 \\[4pt]
%\hline
0      & 0      & \cdots & 0      & 0      & \cdots & 0 \\
\vdots & \vdots &        & \vdots & \vdots &        & \vdots \\
0      & 0      & \cdots & 0      & 0      & \cdots & 0
\end{pmatrix}
$$
where the second $\approx$ is obtained by applying induction to the obvious submatrix.
The inequalities $d_E(b_{11})<d_E(b_{22})< \cdots < d_E(b_{ss})$ and the inclusions  $Rb_{11}\supset Rb_{22}\supset \cdots \supset Rb_{ss}\neq \{0\}$ are obvious (the first one follows from the fact that $b_{11}=\gcd (A)$). 
\end{proof}

{\bf Euclidean bimodules.} An $R$-bimodule $E$ is called a {\bf cyclic $R$-bimodule} if $E=Re=eR$ for some element $e\in E$ which is called a {\bf cyclic generator} for $E$. An element $c\in E$ of an $R$-module $E$ is called a {\bf cyclic element} of $E$ if $Rc=cR$. Let $\CC (E)$ be the set of cyclic elements of $E$. 
\begin{definition}
An $R$-bimodule $(E,d_E)$ is called a {\bf Euclidean $R$-bimodule} if the left and right $R$-modules, $(_RE,d_E)$ and $(E_R,d_E)$, are Euclidean. 
\end{definition}

\begin{lemma}\label{a9Aug26}%\marginpar{a9Aug26}
Let $(E,d_E)$ be a Euclidean $R$-bimodule. Then:  

\begin{enumerate}

\item Every sub-bimodule of the Euclidean $R$-bimodule $E$ is a cyclic $R$-bimodule.

\item If $N$ is a nonzero sub-bimodule of $E$ then the set of cyclic generators of the $R$-bimodule $N$ contains  the non-empty  set $\CG (N)$.
%$:=\{ n\in N\, | \, d_M(n)=l \}$ where $l:=\min \{ d_M(n')\, | \, 0\neq n'\in N \}$. 

\end{enumerate}
\end{lemma}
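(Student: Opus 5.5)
The plan is to imitate the proof of Proposition \ref{A28May26}.(2), replacing left division by left and right division. Let $N$ be a nonzero sub-bimodule of $E$. Then $N$ is a nonzero left submodule of ${}_RE$ and a nonzero right submodule of $E_R$. Since $d_E$ is the same function for both module structures, $g(N)$ and $\CG(N)$ do not depend on which side we view $N$ from. Because $(\im (d_E),\geq)$ is Artinian, the minimum $g(N)$ is attained, so $\CG(N)\neq\emptyset$.

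For statement 2, fix $v\in \CG(N)$.
\begin{itemize}
\item Take any nonzero $u\in N$. The left division property of $({}_RE,d_E)$ gives $u=qv+r$ with $q\in R$ and either $r=0$ or $d_E(r)<d_E(v)=g(N)$.
\item Since $r=u-qv\in N$, minimality of $g(N)$ forces $r=0$. Hence $N=Rv$.
\item The right division property of $(E_R,d_E)$ gives $u=vq'+r'$ with $r'=u-vq'\in N$. The same minimality argument forces $r'=0$, so $N=vR$.
\end{itemize}
Therefore $N=Rv=vR$, so $v$ is a cyclic generator of $N$. This is exactly Proposition \ref{A28May26}.(2) applied once to the left module and once to the right module, using that $N$ is a submodule on both sides.

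Statement 1 follows at once. The zero sub-bimodule is $0=R0=0R$, which is trivially cyclic. For nonzero $N$, statement 2 produces a cyclic generator.

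There is no serious obstacle. The only point needing care is that a single element $v$ must generate $N$ on both sides simultaneously. This holds because the left and right Euclidean functions coincide, so the same set $\CG(N)$ of minimal-degree elements serves both sides.
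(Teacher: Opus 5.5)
Your proposal is correct and matches the paper's proof: fix $e\in\CG(N)$ and apply Proposition~\ref{A28May26}.(2) to ${}_RE$ and to $E_R$, which share the same function $d_E$, to get $N=Re=eR$, and then deduce statement 1. Your explicit treatment of the zero sub-bimodule covers a case the paper leaves implicit.
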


\begin{proof} 2. By the definition, $\CG (N)\neq \emptyset$. Fix an element $e\in \CG (N)$. Then $N=Re$ and $N=eR$, by Proposition \ref{A28May26}, and so $e$ is a cyclic generator for the $R$-bimodule $N$.

1. Statement 1 follows from statement 2.
\end{proof}  

\begin{definition}
Let $E$ be a Euclidean $R$-bimodule. For  elements $a_1, \ldots , a_n\in E$, a cyclic  generator 
$\gcd_b (a_1, \ldots , a_n)$ of the cyclic  sub-bimodule $N_b(a_1, \ldots , a_n):=\sum_{i=1}^n Ra_iR$ of $E$   is called the {\bf bimodule greatest common divisor} of the elements  $a_1, \ldots , a_n$ where $Ra_iR$ is a sub-bimodule of $E$ generated by the element $a_i$. Similarly, a cyclic generator 
$\lcm_b (a_1, \ldots , a_n)$ of the cyclic  sub-bimodule $I_b(a_1, \ldots , a_n):=\bigcap_{i=1}^n Ra_iR$ of $E$   is called the {\bf bimodule least common multiple} of the elements  $a_1, \ldots , a_n$.
\end{definition}

In general,  the elements $\gcd_b (a_1, \ldots , a_n)$ and $\lcm_b (a_1, \ldots , a_n)$
are not unique, see Corollary \ref{b16Jun26}.(2). More precisely, by Corollary \ref{b16Jun26}.(2), if cyclic elements $g, g'\in E$  are bimodule greatest common divisors of the  set of elements $a_1, \ldots , a_n\in E$ then there exist elements $u,v\in R$ such that 
$g= ug'$ and $g'= vg$  or, equivalently, there exist elements $x,y\in R$ such that $ g= g'x$ and $g'= g y$.

Similarly, if cyclic elements $l, l'\in E$  are bimodule little  common multiples  of the  set of elements $a_1, \ldots , a_n\in E$ then there exist elements $u',v'\in R$ such that $l= u'l'$ and $l'= v'l$  or, equivalently, there exist elements
$x',y'\in R$ such that 
$l= l'x'$ and $l'= l y'$.

\begin{proposition}\label{bi-A14Jul26}%\marginpar{bi-A14Jul26}
Let $E$ be a Euclidean $R$-bimodule 
%of infinite length 
 and  $a_1, \ldots , a_n\in E$. Then for a cyclic element $g\in E$ the following statements are equivalent:
 
\begin{enumerate}

\item $g=\gcd_b (a_1, \ldots , a_n)$.

\item For each $i=1, \ldots , n$, $a_i=r_ig$ for some  $r_i\in R$ (equivalently, $a_i=gs_i$ for some  $s_i\in R$)  and if a cyclic element $g'\in E$ satisfies the condition above (i.e. $a_i=r_i'g'$ for some elements $r_i'\in R$)  then $g=\alpha g'$ for some element $\alpha \in R$ (equivalently, $g=q'\alpha'$ for some element $\alpha' \in R$.  

\item For each $i=1, \ldots , n$, $a_i=r_ig$ for some  $r_i\in R$ (equivalently, $a_i=gr_i'$ for some  $r_i'\in R$ ) and $g=\sum_{i=1}^ns_ia_it_i$ for  some elements $s_i,t_i\in R$. 

\end{enumerate}

In particular,  $\gcd_b (a_1, \ldots , a_n)= \sum_{i=1}^ns_ia_it_i$ for  some elements $s_i,t_i\in R$.
\end{proposition}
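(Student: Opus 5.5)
The plan is to mirror the proof of Proposition \ref{A14Jul26}, working with the single sub-bimodule $N:=N_b(a_1,\ldots,a_n)=\sum_{i=1}^nRa_iR$. By Lemma \ref{a9Aug26}, $N$ is a cyclic bimodule and $N=Rg=gR$ for any cyclic generator $g$. The key preliminary observation is this: for a cyclic element $g$ we have $Rg=gR=RgR$. Hence, for any element $x\in E$,
$$x\in Rg \iff x\in gR \iff RxR\subseteq RgR.$$
This lets us replace the left divisibility statements by right ones, which is what the parenthetical ``equivalently'' clauses assert. Similarly, $Rg\subseteq Rg'$ for cyclic $g,g'$ is equivalent to $gR\subseteq g'R$.

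$(1\Leftrightarrow 3)$. By definition, $g=\gcd_b(a_1,\ldots,a_n)$ means $RgR=Rg=\sum_iRa_iR$. I split this equality into two inclusions.
\begin{enumerate}
\item $RgR\supseteq Ra_iR$ for all $i$. By the observation, this holds iff $a_i\in Rg$, i.e. $a_i=r_ig$ for some $r_i\in R$, iff $a_i\in gR$.
\item $Rg\subseteq \sum_iRa_iR$. This holds iff $g=\sum_i s_ia_it_i$ for some $s_i,t_i\in R$; a general element of $\sum_iRa_iR$ is a finite sum $\sum_{i,k}s_{ik}a_it_{ik}$, so one needs to either allow finitely many terms per $i$, or note that such a sum is what the statement intends. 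I will phrase it as a finite sum of such terms and remark that this is what ``$\sum s_ia_it_i$'' abbreviates.
\end{enumerate}
Conversely, both conditions together give $Rg\subseteq N\subseteq RgR=Rg$, so $g$ is a cyclic generator of $N$.

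$(1\Leftrightarrow 2)$. If $g$ generates $N$ and $g'$ is a cyclic element with $a_i=r_i'g'$ for all $i$, then $Ra_iR\subseteq Rg'R=Rg'$. Hence $g\in N\subseteq Rg'$, so $g=\alpha g'$, and also $g\in g'R$ by cyclicity. Conversely, assume 2. The first part of 2 gives $N\subseteq Rg$. To get the reverse inclusion, apply the maximality clause to $g':=$ any cyclic generator of $N$, which exists by Lemma \ref{a9Aug26}. This $g'$ satisfies the divisibility hypothesis since $a_i\in N=Rg'$, so $g\in Rg'=N$. Hence $Rg=N$.

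The ``in particular'' then follows from $(1\Rightarrow 3)$. The only delicate point, and the one I expect to need care, is consistently using $Rc=cR=RcR$ for cyclic $c$ to pass between left and right formulations. This is also the reason the statement requires $g$ and $g'$ to be cyclic. The rest is the same bookkeeping as in Proposition \ref{A14Jul26}.
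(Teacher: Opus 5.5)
Your proof is correct and follows the paper's argument: you split $Rg=\sum_iRa_iR$ into the same two inclusions, with $Rg'R=Rg'$ for cyclic $g'$ doing the work, and you are more careful than the paper on $2\Rightarrow 1$, where you feed a cyclic generator of $N$ (Lemma \ref{a9Aug26}) into the maximality clause. Your finite-sum caveat is necessary, not merely notational: take $E=R=\mathbb{H}[x]$ with $x$ central and $d_E=\deg$, and $a_1=x-\mathbf{i}$; then $\mathbf{j}a_1-a_1\mathbf{j}=2\mathbf{k}$ is a unit, so $g=1$ is a $\gcd_b(a_1)$, yet $1\neq s a_1 t$ for all $s,t\in R$ by degree, so statement 3 and the closing ``in particular'' hold only if several terms $s a_i t$ are allowed for each $i$.
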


\begin{proof} $(1\Leftrightarrow 2)$ $g=\gcd_b (a_1, \ldots , a_n)$ iff $Rg=gR=\sum_{i=1}^nRa_iR$ iff $a_i\in Rg$ for $i=1, \ldots , n$ and $Rg= \sum_{i=1}^nRa_iR$ 
iff for each $i=1, \ldots , n$, $a_i=r_ig$ for some  $r_i\in R$ and if an element $g'\in E$ satisfies the condition above (i.e. for each $i=1, \ldots , n$, $a_i=r_i'g'$ for some  $r_i'\in R$ )    then $g=\alpha g'$ for some element $\alpha \in R$ (since $g\in \sum_{i=1}^nR\oa_iR=\sum_{i=1}^nRr_i'g'R\subseteq Rg'$).

$(1\Leftrightarrow 3)$ $g=\gcd_b (a_1, \ldots , a_n)$ iff $Rg=\sum_{i=1}^nRa_iR$ iff $a_i\in Rg$ for $i=1, \ldots , n$ and $Rg\subseteq \sum_{i=1}^nRa_iR$ iff  for each $i=1, \ldots , n$, $a_i=r_ig$ for some  $r_i\in R$ and $g=\sum_{i=1}^ns_ia_it_i$ for  some elements $s_i,t_i\in R$.
\end{proof}

\begin{proposition}\label{bi-B14Jul26}%\marginpar{bi-B14Jul26}
Let $E$ be a Euclidean $R$-module   
 and  $a_1, \ldots , a_n\in E$. Then for an element $l\in E$ the following statements are equivalent:
 
\begin{enumerate}

\item $l=\lcm_b (a_1, \ldots , a_n)$.

\item For each $i=1, \ldots , n$, $l\in Ra_iR $ and if a cyclic element $l'\in E$ satisfies the condition above (i.e. for each $i=1, \ldots , n$, $l'\in Ra_iR $)  then $l'=\alpha l$ for some element $\alpha \in R$.

\end{enumerate}
\end{proposition}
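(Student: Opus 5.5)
The statement is a direct analogue of Proposition \ref{B14Jul26}, so I will prove it by unwinding the definition of $\lcm_b$. By Lemma \ref{a9Aug26}, every sub-bimodule of $E$ is a cyclic $R$-bimodule. In particular, $I_b:=I_b(a_1,\ldots,a_n)=\bigcap_{i=1}^nRa_iR$ is a sub-bimodule, being an intersection of sub-bimodules, so it equals $Rl_0=l_0R$ for some cyclic element $l_0$. Hence $\lcm_b(a_1,\ldots,a_n)$ exists, and by definition $l=\lcm_b(a_1,\ldots,a_n)$ means that $l$ is a cyclic element with $Rl=lR=I_b$. I will read the statement with $l$ a cyclic element, as in the definition, since otherwise $\lcm_b$ does not make sense.

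$(1\Rightarrow 2)$: Suppose $Rl=I_b$. Then $l\in I_b\subseteq Ra_iR$ for every $i$. If $l'$ is a cyclic element with $l'\in Ra_iR$ for all $i$, then $l'\in I_b=Rl$, so $l'=\alpha l$ for some $\alpha\in R$.

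$(2\Rightarrow 1)$: Condition 2 gives $l\in I_b$. Since $l$ is cyclic, $Rl=lR$ is a sub-bimodule, and therefore $Rl\subseteq I_b$. For the reverse inclusion, choose by Lemma \ref{a9Aug26}.(2) a cyclic generator $l_0\in\CG(I_b)$. Then $l_0$ is a cyclic element lying in every $Ra_iR$, so the universal property in condition 2 gives $l_0=\alpha l$, whence $I_b=Rl_0\subseteq Rl$. Thus $Rl=lR=I_b$, i.e. $l=\lcm_b(a_1,\ldots,a_n)$.

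The only point requiring care is that the universal property in condition 2 quantifies only over \emph{cyclic} elements $l'$. This is why the reverse inclusion must be tested against a cyclic generator of $I_b$, which exists by Lemma \ref{a9Aug26}, rather than against an arbitrary element of $I_b$. With that choice made, both directions are routine.
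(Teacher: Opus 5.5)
Your proof is correct and follows the same route as the paper, which simply unwinds $l=\lcm_b(a_1,\ldots,a_n)\iff Rl=lR=\bigcap_{i=1}^nRa_iR$; like you, it has to read $E$ as a Euclidean bimodule and $l$ as cyclic, since otherwise $\lcm_b$ is undefined. The paper only really justifies $(1\Rightarrow 2)$, and your $(2\Rightarrow 1)$ argument (applying the universal property to a cyclic generator $l_0$ of $I_b$ from Lemma \ref{a9Aug26}, which gives $I_b=Rl_0\subseteq Rl$) supplies exactly the step it leaves implicit; when $I_b=0$ the set $\CG(I_b)$ is empty, but then $l=0$ and the claim is trivial.
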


\begin{proof} $(1\Leftrightarrow 2)$ $l=\lcm_b(a_1, \ldots , a_n)$ iff $Rl=lR=\bigcap_{i=1}^nRa_iR$ iff $l\in Ra_iR$ for $i=1, \ldots , n$ and $Rl= \bigcap_{i=1}^nRa_iR$ 
iff  a cyclic element $l'\in E$ satisfies the condition above  (i.e. for each $i=1, \ldots , n$, $l'\in Ra_iR $)  then $l'=\alpha l$ for some element $\alpha \in R$ (since $Rl=\bigcap_{i=1}^nR\oa_iR\ni l'$).
\end{proof}
Clearly, 
$$\gcd_b (a_1, \ldots , a_n)=\gcd_b\Big( \gcd_b (a_1, \ldots ,a_{n-1}),  a_n\Big)\;\; {\rm and}\;\; \lcm_b (a_1, \ldots , a_n)=\lcm_b\Big( \lcm_b (a_1, \ldots ,a_{n-1}),  a_n\Big).
$$

Suppose that  $(E,d_E)$ is  a Euclidean $R$-bimodule and $\emptyset \neq S\subseteq E$. Let $N_b (S):=\sum_{s\in S} RsR$. Any cyclic generator  of the $R$-bimodule $N_b(S)$ is denoted by $\gcd_b(S)$ and called a {\bf bimodule greatest  common divisor} of the elements of the set $E$. The set $\CG (N_b (S))$ consists of bimodule greatest  common divisors  of the $R$-bimoodule $N_b (S)$ (Lemma \ref{a9Aug26}.(2)). A {\bf bi-module generator} of $E$ is an element $a\in E$ such that $E=RaR$.  A cyclic generator $a$  of $E$ is a  bimodule such that  $E=Ea=aE$.

Let $(E,d_E)$ be a Euclidean $R$-bimodule  and   $0\neq A=(a_{ij}) \in M_{m,n}(E, \ldots ,E)$.
 On the columns $c_1, \ldots , c_n$ of the matrix $A$, we can do two types of operations -- {\em elementary column  operations}:
\begin{eqnarray*}
 &{\rm (EC1)}& c_i\mapsto c_i+\sum_{j\neq i}c_jr_j\;\; {\rm where}\;\; r_j\in R,  \\
 &{\rm (EC2)}& c_i\mapsto c_j, \;\; c_j\mapsto c_i.
\end{eqnarray*}
Notice that the elements $r_j\in R$ are written on the {\em right} not on the {\em left} as in the previous definition of (EC1).  For a fixed column, (EC1) adds a linear combination of the other columns to it, and (EC2) swaps two columns. 
 These operations are reversible.

Two matrices $A, A'\in  M_{m,n}(E,\ldots , E)$ are called {\bf right-column equivalent}, $A\sim_{rc} A'$, if one of them can be obtained from the other by finitely many elementary column operations (EC1) and (EC2). The relation $\sim_{rc}$ is an equivalence relation.  Two matrices $A, A'\in   M_{m,n}(E,\ldots , E)$ are called {\bf row-column equivalent}, $A\approx_b A'$ (where $b$ stands for `bimodule'), if one of them can be obtained from the other by finitely many elementary row and column operations: (ER1), (ER2), (EC1) and (EC2). The relation $\approx_b$ is an equivalence relation. For a matrix $A=(a_{ij})\in   M_{m,n}(E,\ldots , E)$, let $\gcd_b (A):=\gcd_b \{ a_{ij}\, | \, i=1,\ldots, m; j=1,\ldots , n\}$. Then $N_b(A):=\sum_{i=1}^m\sum_{j=1}^nRa_{ij}R=R\gcd_b (A)=\gcd_b (A)R$. 
Clearly, if $A\approx_b A'$ then $N_b(A)=N_b(A')$ and $R\gcd_b (A)=R\gcd_b (A')$. 

%\begin{theorem}\label{bi-REForm-N3}%\marginpar{bi-REForm-N3}

%Let $(E,d_E)$ be a  Euclidean $R$-bimodule  and   $A \in M_{m,n}(E, \ldots ,E)$ where $m,n\geq 2$. Then $A\approx_b B$ where
%\[
%B
%=
%\begin{pmatrix}
%b_{11} & 0      & \cdots & 0      & 0      & \cdots & 0 \\
%0      & b_{22} & \cdots & 0      & 0      & \cdots & 0 \\
%\vdots & \vdots & \ddots & \vdots & \vdots &        & \vdots \\
%0      & 0      & \cdots & b_{ss} & 0      & \cdots & 0 \\[4pt]
%\hline
%0      & 0      & \cdots & 0      & 0      %& \cdots & 0 \\
%\vdots & \vdots &        & \vdots & \vdots &        & \vdots \\
%0      & 0      & \cdots & 0      & 0      & \cdots & 0
%\end{pmatrix}_{m\times n}
%\]
%where  $N_b(A)=Rb_{11}R\supseteq Rb_{22}R\supseteq \cdots \supseteq Rb_{ss}R\neq \{0\}$.

%*** ***
% $Rb_{11}\supset Rb_{22}\supset \cdots \supset Rb_{ss}\neq \{0\}$, $b_{11}R\supset b_{22}R\supset \cdots \supset b_{ss}R\neq \{0\}$, $d_E(b_{11})<d_E(b_{22})< \cdots < d_E(b_{ss})$ and $b_{11}=\gcd (A)=\gcd_r (A)$. In particular, $Rb_{11}R\supseteq Rb_{22}R\supseteq \cdots \supseteq Rb_{ss}R\neq \{0\}$, $Rb_{11}=\sum_{i=1}^m\sum_{j=1}^nRa_{ij}$ and $b_{11}R=\sum_{i=1}^m\sum_{j=1}^na_{ij}R$.
 
% **
%\end{theorem}

\begin{proof}[{\bf Proof of Theorem \ref{bi-REForm-N3}}] To prove the theorem, we use induction on $n\geq 2$. Suppose that $n=2$.
 Then 
 \[
A\approx_b 
A_1=\begin{pmatrix}
b_1 & * \\
0 &*
\end{pmatrix}\approx_b 
A_2=\begin{pmatrix}
b_2 & 0 \\
* &*
\end{pmatrix}\approx_b 
A_3=\begin{pmatrix}
b_3 & * \\
0 &*
\end{pmatrix}\approx_b 
A_4=\begin{pmatrix}
b_4 & 0 \\
* &*
\end{pmatrix}\approx_b \cdots
\]
where  $d_E(b_1)> d_E(b_2)> d_E(b_3)> \cdots$, the element $b_1$ is a {\em left} greatest common divisor of the first column of the matrix $A$, the element $b_2$ is a {\em right}  greatest common divisor of the first row of the matrix $A_1$, the element $b_3$ is a {\em left}  greatest common divisor of the first column of the matrix $A_2$, the element $b_4$ is a {\em right} greatest common divisor of the first row of the matrix $A_3$, etc. Since the $R$-modules ${}_RE$ and $E_R$ are Euclidean modules, the descending chain $d_E(b_1)> d_E(b_2)> d_E(b_3)> \cdots$ stabilizers, say, on $p$'th step,
$$
A\approx_b \begin{pmatrix}
b_p & 0 \\
0 & \alpha
\end{pmatrix}.
$$
Then $Rb_p\supseteq R\alpha$ and $b_pR\supseteq \alpha R$. In particular, $Rb_pR\supseteq R\alpha R$, and so $N_b(A)=Rb_pR$.

Suppose that $n>1$  and the statement is true for all $n'<n$.  Then 
\[
A\approx_b 
A_1'=\begin{pmatrix}
b_1' & * \\
0 &*
\end{pmatrix}\approx_b 
A_2'=\begin{pmatrix}
b_2' & 0 \\
* &*
\end{pmatrix}\approx_b 
A_3'=\begin{pmatrix}
b_3' & * \\
0 &*
\end{pmatrix}\approx_b 
A_4'=\begin{pmatrix}
b_4' & 0 \\
* &*
\end{pmatrix}\approx_b \cdots
\]
where   
$d_E(b_1')> d_E(b_2')> d_E(b_3')> \cdots$,
 the element $b_1'$ is a greatest common divisor of the first column of the matrix $A$, the element $b_2'$ is a greatest common divisor of the first row of the matrix $A_1$, the element $b_3'$ is a greatest common divisor of the first column of the matrix $A_2$, the element $b_4'$ is a greatest common divisor of the first row of the matrix $A_3$, etc. Since the $R$-modules ${}_RE$ and $E_R$ are Euclidean modules, the descending chain $d_E(b_1')> d_E(b_2')> d_E(b_3')> \cdots$ stabilizers, say, on $q$'th step,
$$
A\approx_b \begin{pmatrix}
b_{11} & 0 \\
0 & C
\end{pmatrix}.
$$
Notice that $Rb_{11}\supseteq Rc_{ij}$ and $b_{11}R\supseteq c_{ij}R$ for all $i$ and $j$ where $C=(c_{ij})$. In particular, the $R$-bimodule $Rb_{11}R$ contains the $R$-bimodule $N_b(C)=\sum_{i,j}Rc_{ij}R$, and so $N_b(A)=Rb_{11}R$. 

By induction, $C\approx_b B'$ where 
\[
B'
=
\begin{pmatrix}
b_{22} & 0      & \cdots & 0      & 0      & \cdots & 0 \\
0      & b_{33} & \cdots & 0      & 0      & \cdots & 0 \\
\vdots & \vdots & \ddots & \vdots & \vdots &        & \vdots \\
0      & 0      & \cdots & b_{tt} & 0      & \cdots & 0 \\[4pt]
%\hline
0      & 0      & \cdots & 0      & 0      & \cdots & 0 \\
\vdots & \vdots &        & \vdots & \vdots &        & \vdots \\
0      & 0      & \cdots & 0      & 0      & \cdots & 0
\end{pmatrix}_{m-1\times n-1},
\]
where $Rb_{22}R\supseteq Rb_{33}R\supseteq \cdots \supseteq Rb_{tt}R\neq \{0\}$. Since  $b_{22}\in N_b(C)$ and $Rb_{11}R\supseteq N_b(C)$, we obtain the descending chain of $R$-bimodules    $Rb_{11}R\supseteq Rb_{22}R\supseteq \cdots \supseteq Rb_{tt}R\neq \{0\}$. Clearly, 
$$
A\approx_b B:=\begin{pmatrix}
b_{11} & 0 \\
0 & B'
\end{pmatrix}.
$$
\end{proof}

{\bf Ore and denominator sets, localizations and sufficient conditions for a ring to have a semisimple Artinian left quotient  ring.} Let $R$ be a ring. A subset $S$ of $R$ is called a {\em multiplicative set} if  $SS\subseteq S$, $1\in S$ and $0\not\in S$. A 
multiplicative subset $S$ of $R$   is called  a {\em
left Ore set} if it satisfies the {\em left Ore condition}: for
each $r\in R$ and
 $s\in S$, $$ Sr\bigcap Rs\neq \emptyset .$$
Let $\Ore_l(R)$ be the set of all left Ore sets of $R$.
  For  $S\in \Ore_l(R)$, $\ass_l (S) :=\{ r\in
R\, | \, sr=0 \;\; {\rm for\;  some}\;\; s\in S\}$  is an ideal of
the ring $R$.

A left Ore set $S$ is called a {\em left denominator set} of the
ring $R$ if $rs=0$ for some elements $ r\in R$ and $s\in S$ implies
$tr=0$ for some element $t\in S$, i.e., $r\in \ass_l (S)$. Let
$\Den_l(R)$ (resp., $\Den_l(R, \ga )$) be the set of all left denominator sets of $R$ (resp., such that $\ass_l(S)=\ga$). For
$S\in \Den_l(R)$, let $$S^{-1}R=\{ s^{-1}r\, | \, s\in S, r\in R\}$$
be the {\em left localization} of the ring $R$ at $S$ (the {\em
left quotient ring} of $R$ at $S$). By definition, in Ore's method of localization one can localize {\em precisely} at the left denominator sets.
 In a similar way, right Ore and right denominator sets are defined. 
Let $\Ore_r(R)$ and 
$\Den_r(R)$ be the set of all right  Ore and  right   denominator sets of $R$, respectively.  For $S\in \Ore_r(R)$, the set  $\ass_r(S):=\{ r\in R\, | \, rs=0$ for some $s\in S\}$ is an ideal of $R$. For
$S\in \Den_r(R)$,  $$RS^{-1}=\{ rs^{-1}\, | \, s\in S, r\in R\}$$   is the {\em right localization} of the ring $R$ at $S$. If the set $\CC_R$ of {\em regular elements} (i.e. non-zero-divisors) of $R$ is a left Ore set (hence,  a left denominator set), the the ring $Q_l(\CC_R^{-1}R)$ is called the {\bf left quotient ring} of $R$.

%For a ring $R$ and its ideal $\ga$, let $\Den_l(R, \ga)$ be the set of left denominator sets $S$ of $R$ such that $\ass_l(S):=\{ r\in R\mid  sr=0$ for some $s=s(r)\in S\}=\ga$. 

\begin{proposition}\label{A29Jun26}%\marginpar{A29Jun26}

Suppose that $S\in \Der_l(R, \ga)$, $T$ is multiplicative subset of $R$ that contains $S$ and  the image of $T$ in the ring $S^{-1}R$ consists of units. Then $T\in \Der_l(R, \ga)$ and $S^{-1}R\simeq T^{-1}R$.

\end{proposition}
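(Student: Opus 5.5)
Throughout, write $\sigma : R\ra S^{-1}R$, $r\mapsto \frac{r}{1}$, for the canonical ring homomorphism. Recall that $\ker (\sigma )=\ass_l(S)=\ga$, and that every element of $S^{-1}R$ has the form $s^{-1}\sigma (r)$ with $s\in S$ and $r\in R$. (I read $\Der_l$ in the statement as $\Den_l$.) The strategy is to verify the three conditions for $T$ directly (left Ore, left denominator, $\ass_l(T)=\ga$), transporting each one through $\sigma$ and using the assumption that every $\sigma (t)$, $t\in T$, is a unit. The isomorphism $S^{-1}R\simeq T^{-1}R$ then follows from the universal property of Ore localization, or from an explicit map.

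\emph{Left Ore condition for $T$.} Let $r\in R$ and $t\in T$. Since $\sigma (t)$ is a unit, we can write $\sigma (r)\sigma (t)^{-1}=s^{-1}\sigma (r')$ for some $s\in S$ and $r'\in R$. Multiplying on the left by $\sigma (s)$ and on the right by $\sigma (t)$ gives $\sigma (sr-r't)=0$. Hence $sr-r't\in \ga =\ass_l(S)$, so $s'(sr-r't)=0$ for some $s'\in S$. Therefore
$$(s's)r=(s'r')t\in Tr\cap Rt,$$
because $s's\in S\subseteq T$. This proves the left Ore condition.

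\emph{$\ass_l(T)=\ga$.} The inclusion $\ga =\ass_l(S)\subseteq \ass_l(T)$ holds because $S\subseteq T$. Conversely, if $tr=0$ with $t\in T$, then $\sigma (t)\sigma (r)=0$. Since $\sigma (t)$ is a unit, $\sigma (r)=0$, so $r\in \ga$.

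\emph{Denominator condition.} Suppose $rt=0$ with $t\in T$. Then $\sigma (r)\sigma (t)=0$, hence $\sigma (r)=0$, so $r\in \ga$. By definition of $\ga =\ass_l(S)$, there is $s\in S\subseteq T$ with $sr=0$. Thus $T\in \Den_l(R,\ga )$.

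\emph{The isomorphism.} Define $\phi : T^{-1}R\ra S^{-1}R$ by $t^{-1}r\mapsto \sigma (t)^{-1}\sigma (r)$. It is a well-defined ring homomorphism by the universal property of the left localization $T^{-1}R$, since $\sigma$ sends $T$ to units.
\begin{itemize}
\item $\phi$ is surjective: $S\subseteq T$, so every $s^{-1}\sigma (r)$ with $s\in S$ is the image of $s^{-1}r\in T^{-1}R$.
\item $\phi$ is injective: if $\sigma (t)^{-1}\sigma (r)=0$, then $\sigma (r)=0$, so $r\in \ga =\ass_l(T)$. This is exactly the kernel of $R\ra T^{-1}R$, so $t^{-1}r=t^{-1}\cdot \frac{r}{1}=0$.
\end{itemize}

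No step is a genuine obstacle. The only points needing care are the Ore computation, where we must pass from an equality in $S^{-1}R$ back to one in $R$ by killing the error term with an element of $S$, and checking that $\ass_l(T)$ does not grow beyond $\ga$. Both reduce to the invertibility of $\sigma (T)$ together with $\ker (\sigma )=\ga$.
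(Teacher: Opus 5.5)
Your proposal is correct and follows the paper's proof step by step. It uses the same Ore computation via $\frac{r}{1}\bigl(\frac{t}{1}\bigr)^{-1}=s^{-1}r'$ followed by killing the error term with an element of $S$. It also uses the same unit-cancellation argument both for $\ass_l(T)=\ga$ and for the denominator condition, where you correctly check $rt=0\Rightarrow r\in\ga$; the paper's step (iii) writes $tr=0$, evidently a typo.

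The only difference is cosmetic and concerns the isomorphism. You build $T^{-1}R\ra S^{-1}R$ from the universal property and verify bijectivity directly. The paper instead takes the natural monomorphism $S^{-1}R\ra T^{-1}R$ and gets surjectivity from the invertibility of $T$ in $S^{-1}R$.
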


\begin{proof} (i) {\em The set $T$ is a left Ore set of $R$}: We have to show that for given elements $t\in T$ and $r\in R$, there exist elements $t'\in T$ and $r'\in R$ such that $t'r=r't$. By the assumption, the element $t\in T$ is a unit in the ring $S^{-1}R$. Therefore, $rt^{-1}=s^{-1}a$ for  some elements $s\in S$ and $a\in R$, and so $s_1sr=s_1at\in R$ for some element $s_1\in S$. Now, it suffice to take $t'=s_1s\in S\subseteq T$ and $r'=s_1a\in R$.

(ii) $\ass_l(T)=\ga$: The inclusion $T\supseteq S$ implies the inclusion $\ga':= \ass_l(T)\supseteq \ass_l(S)=\ga$. Suppose that $a\in \ga'$. Then $ta=0$ for some element $t\in T$, and  the equality  yields  the equality $\frac{t}{1} \frac{a}{1} =0\in S^{-1}R$. By the assumption, the element  $\frac{t}{1}$ is   a unit in the ring $S^{-1}R$, and so $\frac{a}{1} =0$, i.e. $a\in \ga$. Therefore, $\ga'\subseteq \ga$, and the statement (ii) follows.

(iii) $T\in \Den_l(R, \ga)$: In view of the statements (i) and (ii), in order to show that 
$T\in \Den_l(R, \ga)$ it remains to prove  that $tr=0$ for   some elements $t\in T$ and $r\in R$ implies  $r\in \ga$. The equality $tr=0$ yields the equality $\frac{t}{1} \frac{r}{1} =0\in S^{-1}R$. The element  $\frac{t}{1}$ is   a unit in the ring $S^{-1}R$, and so $\frac{r}{1} =0$, i.e. $r\in \ga$, as required.

(iv) $S^{-1}R\simeq T^{-1}R$: Since $S\subseteq T$ and $S,T\in \Den_l(R,\ga)$, the map 
$$ \phi : S^{-1}R\ra T^{-1}R, \;\; s^{-1}r\mapsto s^{-1}r$$
is a ring monomorphism. Since every element $t$ is a unit in the ring $ S^{-1}R$ the map $\phi$ is an isomorphism.  
\end{proof}

A ring $R$  is called a {\bf semiprime ring} if it has no nonzero nilpotent ideals. The (left)  {\bf uniform dimension} of 
$R$   is the supremum of the number of nonzero left ideals in a direct sum of uniform left ideals. The ring $R$ is called a {\bf left  Goldie ring} if it  satisfies the ascending chain condition on left  annihilators and has finite left uniform dimension. Goldie's Theorem \cite{Goldie-PLMS-1960}  is a criterion for a ring to have a semisimple  left quotient ring (earlier,  characterizations were given, by Goldie \cite{Goldie-PLMS-1958} and Lesieur and Croisot \cite{Lesieur-Croisot-1959}, of left orders in a simple Artinian ring).

\begin{theorem}[\textbf{Goldie's Theorem, \cite{Goldie-PLMS-1960}}]\label{GoldieThm}%%\marginpar{GoldieThm}
%{\bf (Goldie's Theorem, \cite{Goldie-PLMS-1960})}
 A ring has a semisimple left quotient ring iff it is a semiprime  left Goldie ring.
 % that satisfies the ascending chain condition on left annihilators and does not contain infinite direct sums of nonzero left ideals.
\end{theorem}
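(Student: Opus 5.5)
Since the statement is Goldie's classical theorem, quoted from \cite{Goldie-PLMS-1960}, I would not follow the module-theoretic methods of this paper. I would reproduce the standard argument and handle the two implications separately. Throughout, $\CC_R$ denotes the set of regular elements and $Q:=\CC_R^{-1}R$ the left quotient ring.

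$(\Rightarrow)$ Suppose $Q$ exists and is semisimple Artinian.

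First I show $R$ is semiprime. If $I$ is a nonzero nilpotent ideal of $R$, then $QI$ is a left ideal of $Q$. Using the left Ore condition to move elements of $R$ past denominators, I would check that $QIQ$ is a nonzero nilpotent ideal of $Q$, hence $(QI)^k=0$ for some $k$. This is impossible in a semisimple ring.

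Next, for ACC on left annihilators: each left annihilator $\lann_R(X)$ equals $R\cap \lann_Q(X)$. Since $Q$ is left Noetherian, the ACC transfers from $Q$ to $R$.

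Finally, for finite uniform dimension: a direct sum $\bigoplus_i L_i$ of nonzero left ideals of $R$ extends to a direct sum $\bigoplus_i QL_i$ in $Q$. To see the sum stays direct, clear common denominators via the Ore condition. Since $\udim(_QQ)<\infty$, the number of summands is bounded.

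$(\Leftarrow)$ Let $R$ be a semiprime left Goldie ring. I would carry out the following steps in order.

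\emph{Step 1.} The left singular ideal $Z(R)=\{r\mid \lann_R(r)\ \text{is essential}\}$ is zero. Take $r\in Z(R)$ with $\lann(r)$ maximal among such annihilators, which is possible by ACC. For any $x\in R$, the element $rx$ also lies in $Z(R)$, and maximality forces $\lann(rx)=\lann(r)$. Using this, I would show $(rR)^2=0$, so $r=0$ by semiprimeness.

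\emph{Step 2.} If $\lann_R(c)=0$, then $Rc$ is essential and $c$ is regular. Essentiality comes from finite uniform dimension: if $L\cap Rc=0$, the sum $L+Lc+Lc^2+\cdots$ is direct, which contradicts $\udim(_RR)<\infty$ unless $L=0$. Right regularity then uses Step 1: $\{x\mid cx=0\}$ would be annihilated on the left by the essential left ideal $Rc$, so it lies in $Z(R)=0$.

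\emph{Step 3 (main obstacle).} Every essential left ideal $L$ contains a regular element.

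First I show that every nonzero left ideal contains an element $a$ with $\lann(a)\cap Ra=0$. Semiprimeness makes $L$ non-nil, and ACC lets me choose $a$ with $\lann(a)$ maximal among annihilators of non-nilpotent elements of $L$. The delicate point here is ensuring that a suitable power of this $a$ works.

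Then I build a maximal direct sum $Ra_1\oplus\cdots\oplus Ra_n\subseteq L$ with each $\lann(a_i)\cap Ra_i=0$. This sum is finite because $\udim<\infty$, and essential because it is maximal. Put $c=\sum_i a_i$. If $xc=0$, then $xa_i=0$ for all $i$, so $\lann(c)\subseteq \bigcap_i\lann(a_i)$, which meets the essential sum $\bigoplus_i Ra_i$ trivially. Hence $\lann(c)=0$, and $c$ is regular by Step 2.

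\emph{Step 4.} The left Ore condition holds. Given $r\in R$ and $c\in\CC_R$, the left ideal $\{x\mid xr\in Rc\}$ is essential, since it is the preimage of the essential left ideal $Rc$ under the map $x\mapsto xr$. By Step 3 it contains a regular element $d$, so $dr=r'c$ for some $r'\in R$. Regular elements satisfy the denominator condition trivially, so $Q$ exists.

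\emph{Step 5.} $Q$ is semisimple. An essential left ideal $\mathcal{L}$ of $Q$ contracts to an essential left ideal $\mathcal{L}\cap R$ of $R$. By Step 3 this contains a unit of $Q$, so $\mathcal{L}=Q$. A ring with no proper essential left ideals has every left ideal a direct summand, so $Q$ is semisimple; it is Artinian because $\udim(_QQ)=\udim(_RR)<\infty$.
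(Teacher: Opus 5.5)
The paper gives no proof of this statement. Goldie's theorem is quoted from Goldie's 1960 paper and only invoked (in Proposition~\ref{a28Jun26}), so your outline has to stand on its own as the standard textbook proof, which it follows. Most steps are fine, but Step~3 has a genuine gap. You take an \emph{arbitrary} maximal direct sum $Ra_1\oplus\cdots\oplus Ra_n\subseteq L$ with $\lann(a_i)\cap Ra_i=0$ and assert that $\bigcap_i\lann(a_i)$ meets $\bigoplus_iRa_i$ trivially. This is false, and $c=\sum_ia_i$ need not be regular. In $R=M_2(K)$ with $L=R$, take the idempotents
\[
a_1=\begin{pmatrix}1&0\\1&0\end{pmatrix},\qquad a_2=\begin{pmatrix}0&1\\0&1\end{pmatrix}.
\]
Here $Ra_1$ and $Ra_2$ consist of the matrices with zero second, respectively first, column, so $Ra_1\oplus Ra_2=R$. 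Also $\lann(a_i)\cap Ra_i=R(1-a_i)\cap Ra_i=0$. Yet $c=\begin{pmatrix}1&1\\1&1\end{pmatrix}$ is a zero divisor, and $x=\begin{pmatrix}1&-1\\0&0\end{pmatrix}\neq0$ lies in $\bigcap_i\lann(a_i)\cap\bigoplus_iRa_i$. Directness only tells you that $xc=0$ forces all $xa_i=0$; it gives no control over $\bigcap_i\lann(a_i)$.

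The missing idea is to choose the $a_i$ inside the successive annihilators. As long as $L_k:=L\cap\lann(a_1)\cap\cdots\cap\lann(a_k)\neq0$, pick $0\neq a_{k+1}\in L_k$ with $\lann(a_{k+1})\cap Ra_{k+1}=0$. Then $a_ja_i=0$ for $j>i$. Multiplying a relation $\sum_ir_ia_i=0$ on the right by $a_1$ gives $r_1a_1\in\lann(a_1)\cap Ra_1=0$, then by $a_2$, and so on, so the sum is direct. Finite uniform dimension forces $L_n=0$ for some $n$, and essentiality of $L$ gives $\bigcap_{i\le n}\lann(a_i)=0$. Now $\lann(c)\subseteq\bigcap_i\lann(a_i)=0$ and Step~2 applies.

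Two smaller points also need repair.

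First, ``semiprimeness makes $L$ non-nil'' is not true for semiprime rings in general, since a semiprime ring may contain a nonzero nil ideal. You need the Levitzki-type lemma that a semiprime ring with ACC on left annihilators has no nonzero nil one-sided ideals. Its proof: if $0\neq b$ lies in a nil left ideal, then $bR$ is a nil right ideal because $(br)^{k+1}=b(rb)^kr$. Choose $0\neq a\in bR$ with $\lann(a)$ maximal among the left annihilators of nonzero elements of $bR$. For $x\in R$ with $ax\neq0$, let $k\geq2$ be minimal with $(ax)^k=0$. Then $(ax)^{k-1}\in aR\subseteq bR$ is nonzero, so maximality gives $\lann((ax)^{k-1})=\lann(a)$, and $ax\in\lann((ax)^{k-1})$ yields $axa=0$. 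Thus $aRa=0$, contradicting semiprimeness.

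This lemma also finishes Step~1 cleanly. Each $z\in Z(R)$ is nilpotent: if $\lann(z^m)=\lann(z^n)$ for all $m\geq n$ and $z^n\neq0$, the essential left ideal $\lann(z)$ meets $Rz^n$ in some $yz^n\neq0$, but then $y\in\lann(z^{n+1})=\lann(z^n)$, a contradiction. So $Z(R)$ is a nil ideal and hence $0$.

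With a non-nilpotent $a\in L$ in hand, the ``delicate'' power is immediate: if $\lann(a^m)=\lann(a^n)$ for all $m\geq n$, then $\lann(a^n)\cap Ra^n=0$.

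Second, in $(\Rightarrow)$ the Ore condition alone does not make $QI$ two-sided. Use that $Q$ is Artinian: for $c\in\mathcal{C}_R$ we have $QIc\subseteq QI$ and $QIc\simeq QI$, which have the same finite length, so $QIc=QI$. Hence $QI$ is an ideal of $Q$, and $(QI)^k\subseteq QI^k=0$.
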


Every left Noetherian ring is a left Goldie ring, but the converse is not true. Goldie's conditions are weaker and more general than the Noetherian condition. In \cite{Crit-S-Simp-lQuot}, four new criteria are given  for a ring to have a semisimple left quotient ring, they are based on completely new ideas.

Corollary \ref{a29Jun26} gives a sufficient condition for a ring to be a semiprime left Goldie ring. Corollary \ref{a29Jun26} is used in the proof of Proposition \ref{a29Jun26}.

\begin{corollary}\label{a29Jun26}%\marginpar{a29Jun26}

Suppose that $S\in \Der_l(R,0)$ and $S^{-1}R$ is a semisimple Artinian ring. Then $\CC_R\in \Den_l(R, 0)$,  $Q_l(R)\simeq S^{-1}R$ and the ring $R$ is a semiprime left Goldie ring.

\end{corollary}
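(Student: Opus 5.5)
The plan is to apply Proposition \ref{A29Jun26} with $T:=\CC_R$, the set of regular elements of $R$, and then invoke Goldie's Theorem \ref{GoldieThm}.

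\emph{Step 1: $S\subseteq \CC_R$.} Since $S\in \Den_l(R,0)$, we have $\ass_l(S)=0$, so $sr=0$ with $s\in S$ forces $r=0$. If $rs=0$, the denominator property gives $tr=0$ for some $t\in S$, i.e. $r\in\ass_l(S)=0$. Hence every $s\in S$ is regular.

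\emph{Step 2: elements of $\CC_R$ become units in $Q:=S^{-1}R$.} Since $\ass_l(S)=0$, the natural map $R\ra Q$ is injective, and we identify $R$ with its image. Fix $c\in \CC_R$. We first show that $c$ is left regular in $Q$. Suppose $(s^{-1}r)c=0$. Then $rc=0$, so $r=0$ because $c$ is regular in $R$. For right regularity, suppose $c\,(s^{-1}r)=0$. By the left Ore condition, $cs^{-1}=s_1^{-1}r_1$ for some $s_1\in S$, $r_1\in R$, so $s_1c=r_1s$. Then $0=c s^{-1}r=s_1^{-1}r_1 r$ gives $r_1r=0$. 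I expect this right-regularity step to be the main obstacle, since we cannot directly cancel $c$ across the fraction. The cleaner route uses semisimplicity: in a semisimple Artinian ring, an element $q$ with zero left annihilator satisfies $Qq\simeq Q$ as left modules. Since $Q$ has finite length, $Qq=Q$, so $q$ has a left inverse; left invertibility in an Artinian (hence Dedekind-finite) ring then implies invertibility. So it suffices to show that $c$ has zero left annihilator in $Q$, which is exactly the left-regularity just proved. Therefore every $c\in\CC_R$ is a unit in $Q$.

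\emph{Step 3: conclude.} $\CC_R$ is a multiplicative set containing $S$, and its image in $S^{-1}R$ consists of units. Proposition \ref{A29Jun26} therefore gives $\CC_R\in\Den_l(R,\ga)$ with $\ga=\ass_l(S)=0$, together with $\CC_R^{-1}R\simeq S^{-1}R$. Thus $Q_l(R)\simeq S^{-1}R$, which is semisimple Artinian. Goldie's Theorem \ref{GoldieThm} then yields that $R$ is a semiprime left Goldie ring.
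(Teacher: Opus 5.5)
Your proof is correct and is essentially the paper's argument: take $T=\CC_R$ in Proposition \ref{A29Jun26}, show each $c\in \CC_R$ is a unit of $S^{-1}R$ because $a\mapsto ac$ is an injective endomorphism of the finite-length left module $S^{-1}R$ (hence surjective), and finish with Goldie's Theorem \ref{GoldieThm}. The only difference is cosmetic: you upgrade the left inverse to a two-sided one via Dedekind-finiteness of Artinian rings, whereas the paper reruns the injectivity argument for $a\mapsto ca$ on the right Artinian module $S^{-1}R$ (your abandoned Ore-condition attempt at right regularity is simply not needed).
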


\begin{proof} Since $S\in \Der_l(R,0)$,   $R\subseteq S^{-1}R$. The corollary follows from Proposition \ref{A29Jun26} where $T=\CC_R$. In more detail, the set $\CC_R$ is a multiplicative set with $\ass_l(\CC_R)=0=\ass_l(S)$ and $S\subseteq \CC_R$ (since  $S\in \Der_l(R,0)$). In order to apply Proposition \ref{A29Jun26}, it remains to show that the set $\CC_R$ consists of units of the localization ring $S^{-1}R$. Clearly, the ring $R$ is an essential left $R$-submodule of the ring $S^{-1}R$. Therefore, for each element $c\in \CC_R$, the $R$-homomorphism 
$$ \cdot c: S^{-1}R\ra S^{-1}R, \;\; a\mapsto ac
$$
is a monomorphism. Every monomorphism of an Artinian module is an isomorphism. Therefore, the $R$-monomorphism $\cdot c$ is a bijection, and so $bc=1$ for some element $b\in S^{-1}R$.  This implies the map 
$$ c\cdot : S^{-1}R\ra S^{-1}R, \;\; a\mapsto ca
$$
is has zero kernel (if $cu=0$ for some element $u\in S^{-1}R$ then $0=b0=bcu=1\cdot u = u$). The map $c\cdot$ is a monomorphism of  the right Artinian $S^{-1}R$-module $S^{-1}R$. Therefore, the map $c\cdot$ is an isomorphism. This implies that $cd=1$ for some element $d\in S^{-1}R$. Hence, the element $c$ has right and left inverses in $S^{-1}R$ , i.e. it is an invertible element of the ring $S^{-1}R$, as required. 
\end{proof}

%{\bf Endomorphism ring $\End_R(\bigoplus_{i=1}^nM_i)$ of a direct sum of Euclidean modules.}

{\bf Endomorphism rings and localizations.}
 Let $R$ be a ring, $I$ be a left ideal of $R$, $\ga :=\ann_R(R/I)\subseteq I$, $\bR := R/\ga=\bR/ \bI$ where $\bI := I/\ga$. In particular, the $\bR$-module $R/I$ is a faithful module. Suppose that $S\in \Den_l(\bR)$.  Then  $S^{-1}(\bR/ \bI)\simeq S^{-1}\bR/ S^{-1}\bI$ and 
$$
\End_{S^{-1}\bR}(S^{-1}(\bR/ \bI))=\mI_{S^{-1}\bR}(S^{-1}\bI):=\Big\{ s^{-1}r\in S^{-1}\bR \mid S^{-1}\bI s^{-1}r\subseteq S^{-1}\bI \Big\}/S^{-1}\bI.
$$  
We write endomorphisms on the side opposite to the scalars, namely on the right; however, when dealing with a single endomorphism, we place it on the left to simplify the notation.

\begin{proposition}\label{a28Jun26}%\marginpar{a28Jun26}
Let $R$ be a ring, $I$ be a left ideal of $R$, $\ga :=\lann_R(R/I)\subseteq I$,  $\bR := R/\ga$  and $\bI := I/\ga$. Notice that the $\bR$-modules $R/I$ and $ \bR/ \bI$ are  isomorphic. 
 Suppose that  $S\in \Den_l(\bR)$ and  the $\bR$-module $\bR/\bI$ is $S$-torsion-free and  $S^{-1}\bI s^{-1}= S^{-1}\bI$ ($\Leftrightarrow$ $S^{-1}\bI s= S^{-1}\bI$)  for all elements $s\in S$. Then:
 \begin{enumerate}

\item    $\End_{S^{-1}\bR}(S^{-1}(\bR/ \bI))\simeq S^{-1}\End_{\bR}(\bR/ \bI)$ and $\End_{\bR}(\bR/ \bI)\subseteq S^{-1}\End_{\bR}(\bR/ \bI)$. 

\item Suppose,  in addition, that the $S^{-1}\bR$-module $S^{-1}(\bR/ \bI)\simeq S^{-1}\bR/ S^{-1}\bI$ is a finitely generated  semisimple  module. Then:
\begin{enumerate}

\item  The endomorphism ring 
$\End_{S^{-1}\bR}(S^{-1}(\bR/ \bI))\simeq S^{-1}\End_{\bR}(\bR/ \bI)$ is a semisimple Artinian ring 
%(it is a direct product $\prod_{i=1}^sM_{n_i}(D_i)$ of matrix rings over division rings $D_i$)
  that contains the endomorphism ring $\End_{\bR}(\bR/ \bI)$.

\item  The  classical left quotient ring  $Q_l(\End_{\bR}(\bR/ \bI))$ of $\End_{\bR}(\bR/ \bI)$ is isomorphic to  the ring  $\End_{S^{-1}\bR}(S^{-1}(\bR/ \bI))$.

\item The ring  $\End_{\bR}(\bR/ \bI)$ is a semiprime left Goldie ring.

\end{enumerate}

\item Suppose,  in addition, that the $S^{-1}\bR$-module $S^{-1}(\bR/ \bI)\simeq S^{-1}\bR/ S^{-1}\bI$ is a simple module. Then  the endomorphism ring 
$\End_{S^{-1}\bR}(S^{-1}(\bR/ \bI))\simeq S^{-1}\End_{\bR}(\bR/ \bI)$ is a division ring that contains the endomorphism ring $\End_{\bR}(\bR/ \bI)$ which is a domain of left uniform dimension 1 (i.e. all nonzero left ideals of it are essential left $\End_{\bR}(\bR/ \bI)$-modules) and the left classical quotient ring of $\End_{\bR}(\bR/ \bI)$, $Q_l(\End_{\bR}(\bR/ \bI))$, is isomorphic to the division ring  $\End_{S^{-1}\bR}(S^{-1}(\bR/ \bI))$.

\end{enumerate}
\end{proposition}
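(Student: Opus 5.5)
We need to prove Proposition a28Jun26. Write $\bR$, $\bI$, $E:=\End_{\bR}(\bR/\bI)=\mI_{\bR}(\bI)/\bI$ and $\CE:=\End_{S^{-1}\bR}(S^{-1}\bR/S^{-1}\bI)=\mI_{S^{-1}\bR}(S^{-1}\bI)/S^{-1}\bI$, both acting by right multiplication.

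For statement 1 we first embed $E$ into $\CE$. Since $\bR/\bI$ is $S$-torsion-free, the natural map $\bR/\bI\ra S^{-1}(\bR/\bI)$ is injective and $\bR/\bI$ is an essential $\bR$-submodule. Every endomorphism $\cdot\oa$ of $\bR/\bI$ extends uniquely to $S^{-1}(\bR/\bI)$ by $s^{-1}(r+\bI)\mapsto s^{-1}ra+S^{-1}\bI$. This map is well defined because $\bI a\subseteq\bI$ gives $S^{-1}\bI a\subseteq S^{-1}\bI$. It yields an injective ring homomorphism $E\ra\CE$, and injectivity follows from torsion-freeness. Next, the hypothesis $S^{-1}\bI s^{\pm1}=S^{-1}\bI$ says exactly that each $s\in S$, viewed as $s+S^{-1}\bI$, lies in $\mI_{S^{-1}\bR}(S^{-1}\bI)$ and is a unit of $\CE$, with inverse $s^{-1}+S^{-1}\bI$. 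Its image in $\CE$ therefore consists of units. Conversely, every element of $\CE$ has the form $s^{-1}r+S^{-1}\bI$. The key step is to show that $r+\bI$ is then forced into $E$, that is, $\bI r\subseteq\bI$. Indeed, $S^{-1}\bI s^{-1}r\subseteq S^{-1}\bI$ and $S^{-1}\bI s^{-1}=S^{-1}\bI$ give $S^{-1}\bI r\subseteq S^{-1}\bI$. Hence $\bI r\subseteq S^{-1}\bI\cap\bR=\bI$, and this intersection equality is where $S$-torsion-freeness of $\bR/\bI$ (saturation of $\bI$) is used. Consequently $\CE=\{\bar s^{-1}e\mid s\in S,\ e\in E\}$, where $\bar S$ is the image of $S$ in $E$, a set of units of $\CE$. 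The left Ore condition and the denominator condition for $\bar S$ in $E$ follow from this description, exactly as in the proof of Proposition \ref{A29Jun26}: $e\bar t^{-1}\in\CE$ can be rewritten as $\bar s^{-1}e'$, and $\bar S$ acts injectively since its elements are units in $\CE\supseteq E$. Hence $\bar S\in\Den_l(E,0)$ and $\CE\simeq\bar S^{-1}E=:S^{-1}\End_{\bR}(\bR/\bI)$. The main obstacle is making this last identification rigorous, in particular checking that $\bar S$ is multiplicative and avoids $0$ and that the universal property yields an isomorphism rather than a mere surjection. Here the universal property of Ore localization applies: $\bar S$ consists of units in $\CE$, and every element of $\CE$ is a left fraction.

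For statement 2, $S^{-1}\bR/S^{-1}\bI$ is finitely generated semisimple, hence a finite direct sum of simple modules. By Schur's lemma and Wedderburn theory its endomorphism ring $\CE$ is a finite product of matrix rings over division rings, so it is semisimple Artinian; this proves (a). Then $\bar S\in\Den_l(E,0)$ and $\bar S^{-1}E\simeq\CE$ is semisimple Artinian, so Corollary \ref{a29Jun26} gives $Q_l(E)\simeq\CE$ and shows that $E$ is a semiprime left Goldie ring. This yields (b) and (c).

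For statement 3, simplicity together with Schur's lemma makes $\CE$ a division ring. The subring $E$ is then a domain and, by statement 2, $Q_l(E)\simeq\CE$. Finally, we show that $E$ has left uniform dimension 1. Given nonzero left ideals $L_1,L_2$ of $E$, pick nonzero $x\in L_1$ and $y\in L_2$. The left Ore condition for $\CC_E=E\backslash\{0\}$ gives nonzero $u,v$ with $ux=vy$, so $L_1\cap L_2\neq0$ and every nonzero left ideal is essential.
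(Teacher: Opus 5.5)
Your proposal is correct and follows the paper's route. The paper also rests on the identification $s^{-1}r\in\mI_{S^{-1}\bR}(S^{-1}\bI)\Leftrightarrow \bI r\subseteq S^{-1}\bI\cap\bR=\bI$, which uses $S^{-1}\bI s^{-1}=S^{-1}\bI$ and $S$-torsion-freeness; it then applies Wedderburn theory and Corollary~\ref{a29Jun26} for statement 2 and Schur's lemma for statement 3.

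You go beyond the paper in two places it leaves implicit. First, you verify that the image of $S$ is a left denominator set of $\End_{\bR}(\bR/\bI)$ with zero $\ass_l$, so the isomorphism with $S^{-1}\End_{\bR}(\bR/\bI)$ is a genuine Ore localization. Second, you give the Ore-condition argument showing left uniform dimension~1.
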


\begin{proof} 1. By the assumption, the $\bR$-module $\bR/\bI$ is $S$-torsion-free. Therefore, the $\bR$-module $\bR/\bI$ is an essential $\bR$-submodule of $S^{-1}(\bR/ \bI)$ and 
$$
\{0\}=\tor_S(\bR /\bI)=(S^{-1}\bI\cap \bR)/\bI\;\; (\Leftrightarrow\;\; S^{-1}\bI\cap \bR = \bI) .
$$ 
Given an element $s^{-1}r\in S^{-1}\bR$, where $s\in S$ and $r\in \bR$, then   $$
s^{-1}r\in \mI_{S^{-1}\bR}(S^{-1}\bI)=\End_{S^{-1}\bR}\Big(S^{-1}(\bR/ \bI)\Big)
$$
 iff $S^{-1}\bI s^{-1}r\subseteq S^{-1}\bI$ 
 iff $S^{-1}\bI r\subseteq S^{-1}\bI$
 (since $S^{-1}\bI s^{-1}=S^{-1}\bI$) 
iff $\bI r\subseteq S^{-1}\bI\cap \bR=\bI$   %(the inclusion follows from the assumption that $\bI s^{-1}\subseteq S^{-1}\bI$  for all elements $s\in S$)
 iff $$r+\bI\in \mI_{\bR}(\bI)/\bI=\End_{\bR}(\bR/ \bI).$$ Therefore, $\End_{S^{-1}\bR}(S^{-1}(\bR/ \bI))\simeq S^{-1}\End_{\bR}(\bR/ \bI)$. In particular, for all elements $s\in S$, 
 $$\cdot s^{-1}\in \End_{S^{-1}\bR}(S^{-1}(\bR/ \bI)).
 $$
Since  the $\bR$-module $\bR/\bI$ is $S$-torsion-free, the ring  $\End_{\bR}(\bR/\bI)=\mI(\bI)/\bI\subseteq \bR/\bI$ is also  $S$-torsion-free. Therefore,    the ring $\End_{\bR}(\bR/ \bI)$ is a subring of $S^{-1}\End_{\bR}(\bR/ \bI)$.

 2.(a) Since the $S^{-1}\bR$-module $S^{-1}(\bR/ \bI)$ is a finitely generated  semisimple module, it is a direct sum $\bigoplus_{i=1}^sV_i^{n_i}$ of simple non-isomorphic $S^{-1}\bR$-modules $V_i$ with multiplicities $n_i\geq 1$. Therefore, 
 the endomorphism ring 
$$
\End_{S^{-1}\bR}(S^{-1}(\bR/ \bI))\simeq \End_{S^{-1}\bR}\bigg(\bigoplus_{i=1}^sV_i^{n_i}\bigg)\simeq \prod_{i=1}^sM_{n_i}(\End_{S^{-1}\bR}(V_i))\simeq \prod_{i=1}^sM_{n_i}(D_i)
$$
 is  a direct product of matrix rings over division rings $D_i=\End_{S^{-1}(\bR/ \bI)}(V_i)$ (which is a semisimple Artinian ring).   By statement 1, the ring $\End_{S^{-1}\bR}(S^{-1}(\bR/ \bI))$    contains the endomorphism ring $\End_{\bR}(\bR/ \bI)$. 
 
(b) The statement (b) follows from the statement (a) and Corollary \ref{a29Jun26}.
 
(c) The statement (c) follows from the statement (b) and Goldie's Theorem.  

 3. Statement 3 is a particular case of statement 2. 
\end{proof}

Suppose that $S$ is a multiplicative subset of $R$ and $I$ is a non-empty subset of $R$. We say that the set $E$ satisfies the {\bf left $I$-Ore condition} or is a  {\bf left  $I$-Ore set} of $R$ if for any elements $i\in I$ and $s\in S$ there exist elements $s_1,s_2\in S$ and $i_1\in I$ such that $s_2(s_1i-i_1s)=0$. If $I=R$ then every left $I$-Ore set of $R$ is a left Ore set of $R$.

\begin{lemma}\label{a1Aug26}%\marginpar{a1Aug26}
Let $R$ be a ring, $I$ be a left ideal of $R$ and  $S\in \Der_l(R, \ga)$. Then   the following statements are equivalent:
\begin{enumerate}

\item The multiplicative set $S$ is a left  $I$-Ore set.

\item For any elements $i\in I$ and $s\in S$ there exist elements $s_1\in S$ and $i_1\in I$ such that $s_1i\equiv  i_1s\mod \ga$.

\item For all elements $s\in S$, $S^{-1}Is=S^{-1}I$.

\item For all elements $s\in S$, $S^{-1}Is^{-1}=S^{-1}I$.

\end{enumerate}
\end{lemma}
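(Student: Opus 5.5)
We prove the cycle $1\Leftrightarrow 2$ and $2\Rightarrow 3\Rightarrow 4\Rightarrow 2$. The basic tool is the standard description of equality in the left localization: for $S\in\Den_l(R,\ga)$ and $a,b\in R$, $\frac{a}{1}=\frac{b}{1}$ in $S^{-1}R$ iff $a-b\in\ga$ iff $t(a-b)=0$ for some $t\in S$. Also, $S^{-1}I=\{s^{-1}i\,|\, s\in S, i\in I\}$ is a left ideal of $S^{-1}R$, since $I$ is a left ideal and fractions can be brought to a common denominator by the left Ore condition.

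For $(1\Leftrightarrow 2)$: the identity $s_2(s_1i-i_1s)=0$ with $s_2\in S$ says exactly that $s_1i-i_1s\in\ass_l(S)=\ga$, i.e. $s_1i\equiv i_1s \mod \ga$. Both directions are immediate from this.

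For $(2\Rightarrow 3)$: the inclusion $S^{-1}Is\subseteq S^{-1}I$ holds because $I$ is a left ideal, so $t^{-1}i\,s=t^{-1}(is)$ with $is\in I$. For the reverse inclusion, it suffices to write $\frac{i}{1}$ as an element of $S^{-1}Is$, since $S^{-1}Is$ is closed under left multiplication by $S^{-1}R$. By condition 2, $s_1i\equiv i_1s\mod\ga$, so $\frac{s_1 i}{1}=\frac{i_1s}{1}$ and hence $\frac{i}{1}=s_1^{-1}i_1\cdot s\in S^{-1}Is$. Then $t^{-1}i=t^{-1}\cdot\frac{i}{1}\in S^{-1}Is$.

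For $(3\Leftrightarrow 4)$: right multiply the equality $S^{-1}Is=S^{-1}I$ by $s^{-1}$ to get $S^{-1}I=S^{-1}Is^{-1}$; conversely, multiply $S^{-1}Is^{-1}=S^{-1}I$ on the right by $s$.

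For $(3\Rightarrow 2)$: given $i\in I$ and $s\in S$, condition 3 gives $\frac{i}{1}\in S^{-1}Is$. So $\frac{i}{1}=\sum_k t_k^{-1}i_k s$, and after taking a common left denominator we have $\frac{i}{1}=s_1^{-1}i_1s$ with $s_1\in S$ and $i_1\in I$ (this uses that $I$ is a left ideal). Then $\frac{s_1i}{1}=\frac{i_1s}{1}$, which means $s_1i-i_1s\in\ga$.

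The main obstacle is bookkeeping in the localization. The care needed is in reducing sums of fractions $t_k^{-1}i_k$ to a single fraction with numerator still in $I$, which relies on the left Ore condition and $I$ being a left ideal, and in using that $\ga$ is exactly the kernel of $R\to S^{-1}R$.
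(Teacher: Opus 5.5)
There is a genuine gap, in $(2\Rightarrow 3)$. You justify the inclusion $S^{-1}Is\subseteq S^{-1}I$ by writing $t^{-1}i\,s=t^{-1}(is)$ ``with $is\in I$ because $I$ is a left ideal''. A left ideal is closed under multiplication on the left, not on the right, so $is$ need not lie in $I$, nor in $S^{-1}I$.

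The rest of your argument is correct: $(1\Leftrightarrow 2)$, $(3\Leftrightarrow 4)$, the identity $\frac{i}{1}=s_1^{-1}i_1s$ obtained from 2, and $(3\Rightarrow 2)$, which in fact only uses $S^{-1}I\subseteq S^{-1}Is$. What you have actually proved is that 1 and 2 are equivalent to the one-sided condition $S^{-1}I\subseteq S^{-1}Is$ for all $s\in S$, or equivalently $S^{-1}Is^{-1}\subseteq S^{-1}I$ for all $s\in S$.

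The missing inclusion cannot be supplied, because it does not follow from 2. Here is a counterexample.
\begin{itemize}
\item Let $D=K[x_j\mid j\in\mathbb{Z}]$ over a field $K$, with $\sigma(x_j)=x_{j+1}$, and let $R=D[t;\sigma]$.
\item Let $S=\{t^n\mid n\in\mathbb{N}\}$. Then $S\in{\rm Den}_l(R,0)$ and $S^{-1}R=D[t^{\pm1};\sigma]$.
\item Let $\mathfrak{d}=(x_j\mid j\geq 0)$ and $I=R\mathfrak{d}=\bigoplus_{n\geq 0}\sigma^n(\mathfrak{d})t^n$, where $\sigma^n(\mathfrak{d})=(x_j\mid j\geq n)$ decreases with $n$.
\end{itemize}
For $u=\sum_n a_nt^n\in I$ we have $t^ku=\big(\sum_n\sigma^k(a_n)t^n\big)t^k$, and $\sum_n\sigma^k(a_n)t^n\in I$. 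So conditions 1 and 2 hold.

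On the other hand, $S^{-1}I=\bigoplus_{n\in\mathbb{Z}}\sigma^n(\mathfrak{d})t^n$. The element $x_0t$ lies in $S^{-1}It$, but $x_0\notin\sigma(\mathfrak{d})$, so $x_0t\notin S^{-1}I$. Hence conditions 3 and 4 fail for $s=t$, and the statement as written is false for a general left ideal.

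Your route is essentially the paper's: it proves $(2\Leftrightarrow 4)$ via $is^{-1}=s_1^{-1}i_1$. The paper's second ``iff'' passes over exactly the same point, since that identity only yields $S^{-1}Is^{-1}\subseteq S^{-1}I$.

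The equalities do hold under an extra hypothesis. One option is $IS\subseteq I$. Another is that $S^{-1}R$ is left Noetherian: with $J=S^{-1}I$, the chain $J\subseteq Js\subseteq Js^2\subseteq\cdots$ stabilizes, so $Js^n=Js^{n+1}$ for some $n$, and right multiplication by $s^{-n}$ gives $J=Js$.
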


\begin{proof} The equivalences $(1\Leftrightarrow 2)$ and $(3\Leftrightarrow 4)$ are obvious. 

$(2\Leftrightarrow 4)$ Statement 2 holds iff for any elements $i\in I$ and $s\in S$ there exist elements $s_1\in S$ and $i_1\in I$ such that $is^{-1}=s_1^{-1}i_1$ iff for all elements $s\in S$, $S^{-1}Is^{-1}=S^{-1}I$.
\end{proof}

{\bf Licence.} For the purpose of open access, the author has applied a Creative Commons Attribution (CC BY) licence to any Author Accepted Manuscript version arising from this submission.

{\bf Declaration of interests.} The author declares that they have no known competing financial interests or personal relationships that could have appeared to influence the work reported in this paper.

%{\bf Disclosure statement.} No potential conflict of interest was reported by the author.

{\bf Data availability statement.} Data sharing not applicable – no new data generated.

{\bf Funding.} This research received no specific grant from any funding agency in the public, commercial, or not-for-profit sectors.

\small{

\begin{minipage}[t]{0.5\textwidth}
V. V. Bavula 

School of Mathematical and Physical Sciences, 

University of Sheffield

Hicks Building

Sheffield S3 7RH

UK

email: v.bavula@sheffield.ac.uk 
\end{minipage}

\end{document}